\newtheorem{thm}{Theorem}[section]
\newtheorem{lemma}[thm]{Lemma}
\newtheorem{remark}[thm]{Remark}
\newcommand{\N}{\mathbb{N}}
\newcommand{\ur}[1]{\mathrm{#1}}
\newcommand{\ure}{\ur e}
\newcommand{\Om}{\Omega}
\newcommand{\Ombar}{\overline{\Omega}}
\newcommand{\nn}{\nonumber}
\newcommand{\Lom}[1]{L^{#1}(Ω)}
\newcommand{\Wom}[2]{W^{#1,#2}(Ω)}
\newcommand{\norm}[2][]{\|#2\|_{#1}}
\newcommand{\normm}[2]{\|#2\|_{#1}}
\newcommand{\io}{\int_{Ω}}
\newcommand{\kl}[1]{\left(#1\right)}
\newcommand{\f}[2]{\frac{#1}{#2}}
\newcommand{\ddt}{\frac{\mathrm{d}}{\mathrm{d}t}}
\newcommand{\leb}[2][\Omega]{{L^{#2}(#1)}}
\newcommand{\sob}[3][\Omega]{{W^{#2, #3}(#1)}}
\newcommand{\con}[2][\Ombar]{{C^{#2}(#1)}}
\newcommand{\hp}{\hphantom}
\newcommand{\pe}{\mathrel{\hp{=}}}
\newcommand{\eps}{\varepsilon}
\newcommand{\gt}{>}
\newcommand{\lt}{<}
\newcommand{\defs}{\coloneqq}
\newcommand{\sfed}{\eqqcolon}
\newcommand{\ra}{\rightarrow}
\newcommand{\embed}{\hookrightarrow}
\DeclareMathOperator{\supp}{supp}
\title{Long-term behaviour in a parabolic--elliptic chemotaxis--consumption model}
\author[1]{Mario Fuest\footnote{e-mail: fuestm@math.upb.de}}
\author[1,2]{Johannes Lankeit\footnote{e-mail: jlankeit@math.upb.de}}
\author[3]{Masaaki Mizukami\footnote{e-mail: masaaki.mizukami.math@gmail.com}}
\affil[1]{Institut für Mathematik, Universität Paderborn, Warburger Str.~100, 33098 
Paderborn, Germany}  
\affil[2]{Department of Applied Mathematics and Statistics, Comenius University, 
Mlynsk\'a dolina, 84248 Bratislava, Slovakia}
\affil[3]{Department of Mathematics, Tokyo University of Science, 1-3 Kagurazaka, 
Shinjuku-ku, Tokyo 162-8601, Japan}
\begin{document}
\date{}
\maketitle

\begin{abstract}
\noindent\textbf{Abstract.} Global existence and boundedness of classical solutions of the 
chemotaxis--consumption system 
\begin{align*}
  n_t &= \Delta n - \nabla \cdot (n \nabla c), \\
  0   &= \Delta c - nc,                        
\end{align*}
under no-flux boundary conditions for $n$ and Robin-type boundary conditions
\[
 \partial_{\nu} c = (\gamma-c) g 
\]
for $c$ (with $\gamma>0$ and $C^{1+\beta}(\partial\Omega) \ni g > 0$ for some $\beta\in(0,1)$)
are established in bounded domains 
$\Omega\subset\mathbb{R}^{N}$, $N\ge 1$. 
Under a smallness condition on $\gamma$, moreover, we show convergence to the stationary solution.
\\

\noindent\textbf{MSC 2020:} 35B40, 35A01, 35K20, 92C17, 35Q92\\
\noindent\textbf{Keywords:} chemotaxis; chemotaxis--consumption; global existence; boundedness; 
large-time behaviour; realistic oxygen boundary conditions
\end{abstract}

\section{Introduction}

\textbf{Chemotaxis--consumption models.} 
Chemotaxis is the (partially) directed movement of e.g.\ %
bacteria in response to a chemical signal. In the case of, for example, \textit{Bacillus subtilis} 
in water, the signal substance (here:\ oxygen) is consumed, which leads to the system 
\begin{align}\label{consumptionsystem}
  \begin{cases}
    n_t = Δn - χ∇\cdot(n∇c),\\
    c_t = Δc-nc
  \end{cases}
\end{align}
if we denote by $n$ the population density of bacteria and by $c$ the concentration of 
oxygen and where $χ>0$ stands for the chemotactic sensitivity. 

When coupled to Stokes or Navier--Stokes equations, \eqref{consumptionsystem} turns into the 
prototypical form of the chemotaxis-fluid system intensively investigated in the wake of 
\cite{tuval}. 

It is known that \eqref{consumptionsystem} with homogeneous Neumann boundary conditions in bounded 
two-dimensional domains admits global classical solutions \cite{win_CTNS_global_largedata} and in 
three-dimensional bounded domains has global weak solutions that eventually become smooth, 
\cite{taowin_evsmooth_stabil_consumption}. 

Alternatively, global existence of classical solutions in higher dimensional domains can also be 
achieved by requiring smallness of $\norm[\Lom\infty]{c(\cdot,0)}$ (or more precisely: of 
$χ\norm[\Lom\infty]{c(\cdot,0)}$; cf.\ the simple scaling argument detailed in 
\cite[Introduction]{lankeit_wang}), as shown in \cite{tao_bdoxygenconsumption}. 

Concerning
works in a fluid-context 
we mainly direct the interested reader to Sections~%
4.1 and 4.2 
of the survey \cite{BBTW} and the introductions of \cite{cao_lankeit} and \cite{mizukami_ZAMP} and 
the 
references therein.\\

\textbf{Long-term behaviour and the boundary condition.}
The long-term behaviour in \eqref{consumptionsystem} and its variants (for bounded domains and if 
considered with homogeneous Neumann boundary conditions) in all known cases can be summarized as: 
`Convergence to a constant state'.
Indeed, in \cite{taowin_evsmooth_stabil_consumption} it was shown for \eqref{consumptionsystem} 
that 
for solutions $(n,c)$ of \eqref{consumptionsystem}, $n$ converges to the spatial average 
$\f1{|\Om|}\io n_0$ of the initial data and the second component tends to zero as $t\to 
\infty$. 

For the fluid-coupled system, solutions in two-dimensional domains 
(\cite{win_arma,zhang_li_decay,jiang_wu_zheng,fan_zhao}), classical small-data solutions in 
three-dimensional domains (\cite{cao_lankeit,chae_kang_lee,xia_ye,tan_zhou}), and also all `eventual 
energy solutions' (\cite{win_transAMS}) converge to the same state.

Also if the diffusion is of nonlinear type (\cite{fan_jin}),
the sensitivity function is of more general form (\cite{lswx})
or the signal is consumed only indirectly (\cite{FuestAnalysisChemotaxisModel2019}),
analogous results hold.
(In the former two cases, we refer to \cite{difrancesco_lorz_markowich,win_CalcVarPDE,win_rotationalfluxes} for the corresponding situation with fluid.)

In the presence of logistic source terms modelling population growth (i.e.\ the first equation 
reading $n_t=Δn - χ\nabla\cdot(n∇c) + κn-μn^2$ for some $κ>0$, $μ>0$), the large-time limit for $n$ 
becomes $\f{κ}{μ}$ (see \cite{lankeit_wang}), but remains a constant. 
(Similarly in the fluid-setting, \cite{lankeit_m3as}.) Also in related systems modelling food-supported proliferation, convergence to constants occurs,  \cite{win_food_supported}.

Constants as long-term limit, however, do not seem to be a good fit for the real behaviour of 
\textit{Bacillus subtilis} in water drops, where structure formation has been observed 
experimentally (cf.\ the corresponding discussion in \cite{braukhoff_lankeit} and the experiments in 
\cite{dombrowskietal} and \cite{tuval}). 

In their search for a culprit for this discrepancy, but also with an eye towards the realism and 
appropriateness of the modelling of the behaviour 
of oxygen at the air--water interface, the authors of \cite{braukhoff_lankeit} (following 
\cite{braukhoff}) suggested to replace the usual homogeneous Neumann boundary conditions $∂_{ν}c =0$ 
by 
\begin{equation}\label{realistic-bc}
 ∂_{ν}c = (γ-c)g \qquad \text{on } ∂\Omega,
\end{equation}
so that the assumption of total insulation is replaced by a description of the dissolution of 
gasses in water in accordance with Henry's law (\cite[Section~5.3, page~144]{atkins}). Herein, $γ$ 
indicates the maximal saturation of oxygen in the water and the its influx is 
proportional to the difference between this maximal concentration and the current one. The function 
$g$, which is nonnegative and sufficiently regular on $∂\Om$, can be used to incorporate a 
difference between the air--water and water--ground interfaces. Along the latter, oxygen exchange 
should not happen (or, as an approximation, be extremely slow only, which corresponds to small 
values of $g$).

In \cite{braukhoff_lankeit} it was shown that the stationary system
\begin{align}\label{prob:stationary}
  \begin{cases}
    0   = \Delta n_\infty - \chi \nabla \cdot (n_\infty \nabla c_\infty) & \text{in $\Omega$}, \\
    0   = \Delta c_\infty - n_\infty c_\infty                    & \text{in $\Omega$}, \\
    \partial_\nu n_\infty = n_\infty \partial_\nu c_\infty               & \text{on $\partial \Omega$}, \\
    \partial_\nu c_\infty = (\gamma - c_\infty) g                  & \text{on $\partial \Omega$}
  \end{cases}
\end{align}
(treated in \cite{braukhoff_lankeit} for $χ=1$) has a unique solution for any prescribed positive 
bacterial mass $\io n_{∞}=m>0$, and that, moreover, this solution is nonconstant. 

It was, however, left open whether this stationary solution actually appears as a large-time limit 
of solutions to \eqref{consumptionsystem} with no-flux boundary conditions for $n$ and \eqref{realistic-bc} for $c$.

Very recently, a chemotaxis--Stokes system with Robin boundary conditions generalizing \eqref{realistic-bc} has been studied in \cite{tian_xiang}. For sufficiently strong porous medium type diffusion, weak solutions have been proven to exist globally and to converge to the stationary solution. The result on the long-term behaviour, however, is restricted to the case of no oxygen influx ($γ=0$ in \eqref{realistic-bc}), so that the limit state there, once again, is spatially homogeneous. 

In a one-dimensional setting, a related chemotaxis-consumption system has been studied in 
\cite{knosalla_global} with inhomogeneous Neumann or Dirichlet boundary conditions for the oxygen, 
and steady-states (concerning their existence and uniqueness see \cite{knosalla_nadzieja}) have 
been identified as the large-time limit of solutions (\cite{knosalla_asymptotic}). To the best of 
our knowledge, also for this system the higher-dimensional case is still open. 
Also in \cite{preprint_zhaoyin} (dealing with 
global existence of solutions of a chemotaxis--fluid system in the domain $ℝ^2\times(0,1)$), 
boundary conditions different from homogeneous Neumann conditions are imposed on $c$ on part of the 
boundary, in this case inhomogeneous Dirichlet conditions. For the relation between inhomogeneous 
Dirichlet and no-flux conditions (the former can be regarded as a limiting case of the 
latter), refer to \cite{braukhoff_lankeit}.\\

\textbf{Parabolic--elliptic simplifications of chemotaxis models.} The probably most-studied 
chemotaxis model is the `classical Keller--Segel system' (\cite{KS_70})
\begin{align}\label{classicalKS}
  \begin{cases}
    n_t = Δn - ∇\cdot(n∇c),\\
    τc_t = Δc - c +n, 
  \end{cases}
\end{align}
where (in contrast to \eqref{consumptionsystem}) the signal substance is produced by the 
studied population. 
This system is commonly investigated in either the `fully parabolic' variant 
($τ=1$) or a `parabolic-elliptic' simplification ($τ=0$), which in some sense turns the system 
\eqref{classicalKS} into a single scalar parabolic equation, albeit with a spatially nonlocal term. 

Accessible to additional tools, it was indeed a parabolic--elliptic setting, in which 
many important results, 
including the striking first detection of blow-up (\cite{jaeger_luckhaus}), 
were achieved first (see also, e.g., \cite{nagai95} or \cite{HV_singularitypatterns}). 

For \eqref{consumptionsystem} with homogeneous Neumann conditions
an elliptic simplification of the second equation to $0=Δc-nc$ is 
not possible: every nonnegative solution satisfies $c\equiv 0$ (and the first equation, 
accordingly, 
turns into the much less interesting heat equation). This is different if \eqref{realistic-bc} is 
imposed. In light of the aforementioned modelling considerations supporting this particular choice of boundary conditions, we therefore suggest to view the following as the `correct' parabolic--elliptic variant of 
\eqref{consumptionsystem} (with $\chi = 1$, see also Remark~\ref{rm:chi} below)
  \begin{align} \label{our_system}
    \begin{cases}
      n_t = \Delta n - \nabla \cdot (n \nabla c) & \text{in $\Omega \times (0, \infty)$}, \\
      0   = \Delta c - nc                        & \text{in $\Omega \times (0, \infty)$}, \\
      \partial_\nu n = n \partial_\nu c          & \text{on $\partial \Omega \times (0, \infty)$}, 
\\
      \partial_\nu c = (\gamma - c) g            & \text{on $\partial \Omega \times (0, \infty)$}, 
\\
      n(\cdot, 0) = n_0                          & \text{in $\Omega$}.
    \end{cases}
  \end{align}
  Another way to ensure solvability notwithstanding ellipticity of the second equation, while keeping homogeneous Neumann boundary conditions, is the addition of a source term to said equation. Such a nutrient-taxis system has been studied in \cite{taowin_nutrienttaxis}, the conclusion being global existence of solutions and their convergence to a constant state, if the source becomes homogeneous for large times in a suitable sense. 
From a modelling perspective, using the second equation in an elliptic form corresponds to the 
assumption that oxygen diffuses much faster than bacteria, which often seems justifiable.

Concerning the domain, we will always assume that 
\begin{equation}\label{cond:Om}
  N\in\mathbb{N},\qquad \Om\subset ℝ^N \text{ is a bounded domain with smooth boundary} \tag{$\Om$}.
\end{equation}

For this system \eqref{our_system} we firstly ensure global existence of classical solutions in
\begin{thm}\label{th:global_ex}
 Assume \eqref{cond:Om}.
 Let $\con{1+\beta} \ni g \gt 0$ for some $\beta \in (0, 1)$, $\gamma \gt 0$ and $\con0 \ni n_0 \gt 0$.
 Then there exists a unique tuple  
 \begin{align}\label{reg_nc}
      (n,c)
  \in C^0(\Ombar\times[0,\infty))\cap C^{1, 0}(\Ombar\times(0, \infty)) \cap C^{2,1}(\Om\times(0,\infty))
  \times C^{2,0}(\Ombar\times(0,\infty))
 \end{align}
 solving \eqref{our_system} classically.
 In addition, these functions are nonnegative
 and $(n, c, \nabla c)$ is bounded in $(L^\infty(\Omega \times (0, \infty))^{2+N}$.
\end{thm}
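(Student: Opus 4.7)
The plan is to construct a unique local-in-time classical solution via a standard fixed-point scheme---given $n$ in a Hölder class on $\Ombar\times[0,T]$, solve the elliptic problem $\Delta c = nc$, $\partial_\nu c = (\gamma-c)g$ for $c$ by Schauder / $W^{2,p}$-theory, feed $\nabla c$ into the $n$-equation, iterate---and then to extend it to a global bounded one via time-independent a priori bounds. The local argument should also yield the usual extensibility criterion: either $T_{\max}=\infty$, or $\norm[\Lom\infty]{n(\cdot,t)}\to\infty$ as $t \nearrow T_{\max}$. Nonnegativity of $n$ follows from the parabolic maximum principle once $\nabla c$ is smooth enough, while $0 \leq c \leq \gamma$ follows from the elliptic maximum principle and Hopf's lemma applied to $-\Delta c + nc = 0$ with $\partial_\nu c + gc = \gamma g$ (a negative minimum, or a maximum above $\gamma$, would contradict $g>0$ either at the boundary or, via the strong maximum principle, in the interior); mass conservation $\io n(\cdot,t) = \io n_0$ is immediate from $\partial_\nu n = n\partial_\nu c$.

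\medskip

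The decisive step will be a uniform-in-time $L^p$-bound on $n$ for arbitrary $p \in [2,\infty)$. Testing the first equation with $n^{p-1}$ and integrating by parts, the two arising boundary contributions cancel exactly thanks to $\partial_\nu n = n\partial_\nu c$; a further integration by parts on the drift term, using $\Delta c = nc$ and $\partial_\nu c = (\gamma-c)g$, should then lead to
\[
  \f1p \ddt \io n^p + \f{4(p-1)}{p^2}\io |\nabla n^{p/2}|^2 + \f{p-1}{p}\io n^{p+1}c
  = \f{p-1}{p}\int_{\partial\Omega} n^p(\gamma-c)g
  \leq \f{(p-1)\gamma \|g\|_{L^\infty(\partial\Omega)}}{p}\int_{\partial\Omega} n^p.
\]
The core technical difficulty---absent in the classical Neumann setting and which I expect to be the main obstacle---is to absorb this boundary integral. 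I would combine a multiplicative trace inequality with a Gagliardo--Nirenberg estimate to obtain a bound of the form $\|\varphi\|_{L^2(\partial\Omega)}^2 \leq \eta \io |\nabla\varphi|^2 + C(\eta)\kl{\io |\varphi|^{2/p}}^p$, apply it to $\varphi = n^{p/2}$ so the lower-order term becomes a power of $\norm[\Lom 1]{n}$ (hence bounded by mass conservation), and choose $\eta$ small enough to absorb the gradient into the dissipation. Combined with a Poincaré-type inequality to convert part of the remaining dissipation into a copy of $\io n^p$, this reduces the preceding display to an ODI of the form $\ddt \io n^p + c_1 \io n^p \leq c_2$, yielding $\sup_{t>0}\norm[\Lom p]{n(\cdot,t)}<\infty$.

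\medskip

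Once such a bound is available for arbitrarily large $p$, elliptic $W^{2,p}$-regularity applied to $-\Delta c + nc = 0$ with the given Robin data (using $nc \leq \gamma n$) gives $\norm[\sob{2}{p}]{c(\cdot,t)} \leq C$ uniformly in $t > 0$, and Sobolev embedding with $p > N$ produces the claimed bound on $\nabla c$ in $L^\infty(\Om \times (0,\infty))$. Feeding this back into the $n$-equation, whose drift is now uniformly bounded, an Alikakos--Moser iteration (or, equivalently, smoothing estimates for the Neumann heat semigroup applied to the variation-of-constants representation of $n$) yields the desired $L^\infty$-bound on $n$. By the extensibility criterion this rules out finite-time blow-up, so $T_{\max}=\infty$, and the regularity asserted in \eqref{reg_nc} follows from standard parabolic and elliptic Schauder theory applied to the now bounded and smooth coefficients. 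Uniqueness, finally, is a routine Gr\"onwall-type argument on the differences of two hypothetical solutions.
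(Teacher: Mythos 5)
Your a priori estimate is, in essence, the paper's central lemma: the exact cancellation of the boundary terms coming from $\partial_\nu n = n\partial_\nu c$, the substitution $\Delta c = nc$ to discard a good term, the control of $\int_{\partial\Omega} n^p(\gamma-c)g$ by a trace--interpolation inequality whose lower-order term is eventually reduced to the conserved mass, and the bootstrap through elliptic $W^{2,p}$ regularity to an $L^\infty$ bound. Two caveats there: the testing with $n^{p-1}$ is only formal at the available regularity ($n_t\in L^1$, weak boundary condition), so it needs a weak-testing justification as in the paper; and the Poincar\'e remainder you invoke is $(\io n^{p/2})^2$, which is \emph{not} bounded by mass conservation for $p>2$, so you must either interpolate that term once more against $\|n\|_{L^1}$ (or use Gagliardo--Nirenberg directly on $\io n^p$) or run an iteration over dyadic $p$ -- the paper does the latter, tracking the $p$-dependence of the trace constants and performing a Moser iteration inside one lemma. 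These are repairable. Also note that your parenthetical alternative, ``smoothing estimates for the Neumann heat semigroup applied to the variation-of-constants representation of $n$'', is not available: since $\partial_\nu c=(\gamma-c)g\not\equiv 0$, the boundary condition for $n$ is not homogeneous Neumann and the Duhamel representation underlying those estimates fails -- this is precisely the obstruction the paper points out.

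The genuine gap is the existence framework itself. You presuppose a ``standard fixed-point scheme'' yielding local well-posedness together with the extensibility criterion, but that scheme is exactly what the changed boundary conditions obstruct, and a Schauder-type iteration in H\"older spaces (which is what your sketch amounts to) cannot take the data allowed by the theorem: $n_0$ is only in $\con0$ and satisfies no compatibility condition, whereas parabolic Schauder theory for the conormal problem needs $n_0\in C^{1+\beta}(\Ombar)$ with $\partial_\nu n_0=n_0\partial_\nu c_0$ on $\partial\Omega$. The paper resolves this by applying the Leray--Schauder theorem directly on an arbitrary interval $[0,T]$ -- the time-independent $L^\infty$ bound serving as the a priori estimate for $n=\sigma\Phi(n)$, so no local/global split and no extensibility criterion are needed -- first for regular compatible data, and then passes to general positive continuous $n_0$ by approximation. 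That passage requires two ingredients missing from your proposal: a strict positivity (pointwise lower bound) lemma for $n$, and an $L^2$ stability estimate of Gr\"onwall type for differences of solutions (plus a DiBenedetto-type continuity result to recover continuity down to $t=0$). The same two ingredients are what make your ``routine Gr\"onwall'' uniqueness step nontrivial: because $c$ solves an elliptic problem, one must bound $\|\nabla(c_1-c_2)\|_{\leb2}$ by $\|n_1-n_2\|_{\leb2}$, and the natural estimate uses a positive lower bound on $n$ (or some substitute such as a Poincar\'e inequality adapted to the Robin boundary term); as stated, your uniqueness and your treatment of merely continuous initial data are unsupported.
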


Secondly, we turn our attention to their long-term behaviour and answer the question whether the 
stationary solution found in \cite{braukhoff_lankeit} actually plays the role of a large-time limit 
in the affirmative, at least under a smallness condition on one of the system parameters, but 
without any restriction on the size of the initial data. 

\begin{thm} \label{th:conv}
  Assume~\eqref{cond:Om}.
  \begin{enumerate}
    \item[(i)]
      For any $m, C_g \gt 0$, there is $\gamma_0 \gt 0$ such that if
      \begin{align}\label{eq:conv:cond_n0}
        n_0 \in \con0 \text{ with } 0 < n_0  \text{ and } \io n_0 \le m
      \end{align}
and 
      \begin{align}\label{eq:conv:cond_g_gamma}
        g \in \con{1+\beta} \text { with } \|g\|_{\leb\infty} \le C_g
        \quad \text{and} \quad \gamma \in(0, \gamma_0),
      \end{align}
      then the solution $(n, c)$ given by Theorem~\ref{th:global_ex} fulfills
      \begin{align*}
        n(\cdot, t) \ra n_\infty, \quad
        c(\cdot, t) \ra c_\infty
        \quad \text{and} \quad
        \nabla c(\cdot, t) \ra \nabla c_\infty
        \qquad\text{uniformly as $t \ra \infty$},
      \end{align*}
      where $(u_\infty, v_\infty)$ denotes the unique solution of~\eqref{prob:stationary}
      with $\io n_\infty = \io n_0$.

    \item[(ii)]
      For any $\gamma_0 \gt 0$, there are $m, C_g \gt 0$
      such that if \eqref{eq:conv:cond_n0} and \eqref{eq:conv:cond_g_gamma} are fulfilled,
      then the conclusion from part~(i) holds true.
  \end{enumerate}
\end{thm}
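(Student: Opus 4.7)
My plan is to prove Theorem~\ref{th:conv} by an $L^2$-energy argument for $n-n_\infty$ that treats the chemotactic cross term as a small perturbation of pure diffusion. The starting point is the weak maximum principle applied to $\Delta c = nc$ with $\partial_\nu c = (\gamma-c)g$ (using $n,g\geq 0$), which yields $0\leq c(\cdot,t)\leq \gamma$ for all $t>0$ and likewise $0\leq c_\infty\leq\gamma$. Feeding this into $L^p$ and Schauder estimates for Robin problems, the scaling $\tilde c = c/\gamma$ leads to $\|\nabla c(\cdot,t)\|_{L^\infty(\Omega)} + \|\nabla c_\infty\|_{L^\infty(\Omega)} \leq C(m,C_g)\gamma$. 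Testing the stationary $n_\infty$-equation against $\log n_\infty - c_\infty$ (whose boundary contribution vanishes by the corresponding no-flux condition) furnishes the identity $\nabla n_\infty = n_\infty\nabla c_\infty$, so $n_\infty = A e^{c_\infty}$ with $\|n_\infty\|_{L^\infty(\Omega)} \leq \tfrac{m}{|\Omega|}e^\gamma$.

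\textbf{Key inequalities.} Subtracting the stationary from the time-dependent $c$-equation,
\begin{align*}
  \Delta(c-c_\infty) = n(c-c_\infty) + c_\infty(n-n_\infty)\ \text{in $\Omega$},\qquad
  \partial_\nu(c-c_\infty) = -g(c-c_\infty)\ \text{on $\partial\Omega$},
\end{align*}
and testing against $c-c_\infty$ (the boundary integral $\int_{\partial\Omega}g(c-c_\infty)^2$ is nonnegative and, since $g>0$, coercive on the trace) yields $\|c-c_\infty\|_{H^1(\Omega)} \leq C(m,C_g)\gamma \|n-n_\infty\|_{L^2(\Omega)}$ via a Poincaré--trace inequality. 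Next, testing $n_t = \Delta n - \nabla\cdot(n\nabla c)$ by $n-n_\infty$ makes all boundary terms cancel thanks to $\partial_\nu n = n\partial_\nu c$ (and its stationary analogue), and using $\nabla n_\infty = n_\infty\nabla c_\infty$ to rewrite $\nabla n - n\nabla c = \nabla(n-n_\infty) - n_\infty\nabla(c-c_\infty) - (n-n_\infty)\nabla c$ gives
\begin{align*}
  \f12\ddt\|n-n_\infty\|_{L^2(\Omega)}^2 + \|\nabla(n-n_\infty)\|_{L^2(\Omega)}^2 = \io \nabla(n-n_\infty)\cdot \kl{(n-n_\infty)\nabla c + n_\infty \nabla(c-c_\infty)}.
\end{align*}
Young's inequality together with $\|\nabla c\|_{L^\infty(\Omega)} \leq C(m,C_g)\gamma$ and the $H^1$-bound above absorbs the right-hand side into $\f12\|\nabla(n-n_\infty)\|_{L^2(\Omega)}^2 + C(m,C_g)\gamma^2\|n-n_\infty\|_{L^2(\Omega)}^2$.

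\textbf{Closing the argument and main obstacle.} Mass conservation forces $\io(n-n_\infty)=0$, so Poincaré--Wirtinger upgrades $\|\nabla(n-n_\infty)\|_{L^2(\Omega)}^2 \geq C_P^{-2}\|n-n_\infty\|_{L^2(\Omega)}^2$; for $\gamma$ small enough depending on $m,C_g$, the resulting ODI yields exponential $L^2$-decay of $n-n_\infty$. Statements (i) and (ii) are then just two readings of the single smallness condition $\gamma\cdot C(m,C_g)\ll 1$. Uniform convergence of $n$ follows by interpolating this $L^2$-decay against the time-uniform higher regularity of $n$ from Theorem~\ref{th:global_ex} (plus, if needed, standard parabolic Hölder estimates on unit time-slabs $[t,t+1]$); then $W^{2,p}$ elliptic regularity for the difference equation of $c$ transfers the decay to $c-c_\infty$ and $\nabla(c-c_\infty)$ in $L^\infty$. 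The main obstacle I foresee is parameter tracking: for the energy estimate to close with coefficient $C(m,C_g)\gamma^2$, the bound $\|\nabla c\|_{L^\infty(\Omega)} \leq C(\|n\|_{L^\infty(\Omega)},C_g)\gamma$ must be fed an $L^\infty$-bound on $n$ that depends only on $m$ and $C_g$ (and not degeneratively on $\gamma$), which may require a careful revisit or refinement of the $L^\infty$-estimates implicit in Theorem~\ref{th:global_ex}.
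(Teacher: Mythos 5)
Your overall architecture matches the paper's (an elliptic estimate slaving $c-c_\infty$ to $\|n-n_\infty\|_{\leb2}$, an $L^2$ energy inequality for $n-n_\infty$ closed via mass conservation and Poincar\'e under a smallness condition, then upgrading to uniform convergence by the time-uniform H\"older bounds), but the step you use to control the chemotactic cross term contains a genuine gap — and it is exactly the point you flag as an ``obstacle'' without resolving it. Your absorption argument needs $\|\nabla c(\cdot,t)\|_{\leb\infty}\le C(m,C_g)\,\gamma$, which via elliptic regularity requires $\sup_t\|n(\cdot,t)\|_{\leb\infty}\le C(m,C_g)$. No such bound is available: the $L^\infty$ bound of Theorem~\ref{th:global_ex} (Lemma~\ref{lem:boundedness-n-in-Lp}) depends on $\|n_0\|_{\leb\infty}$, whereas Theorem~\ref{th:conv} only assumes $\io n_0\le m$ and insists that $\gamma_0$ (resp.\ $m,C_g$) be independent of the size of $n_0$. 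With your estimate the threshold would degenerate with $\|n_0\|_{\leb\infty}$, proving a strictly weaker statement. The paper circumvents this entirely: in the term $\tfrac12\int_0^t\io\nabla(n-n_\infty)^2\cdot\nabla c$ it integrates by parts a second time, discards $-\tfrac12\io(n-n_\infty)^2\,nc\le0$ (using $\Delta c=nc\ge0$), and controls the resulting boundary integral $\int_{\partial\Omega}(n-n_\infty)^2(\gamma-c)g\le\gamma\|g\|_{\leb\infty}\int_{\partial\Omega}(n-n_\infty)^2$ by the trace interpolation inequality of Appendix~\ref{sec:interpolation}, so that every constant in $K(m,g,\gamma)$ depends only on $m$, $\|g\|_{\leb\infty}$, $\gamma$ and $\Omega$. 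If you want to keep your route, you would first have to prove a mass-only (or at least $n_0$-independent, eventual) $L^\infty$ bound for $n$, which is not in the paper and is not obviously true.

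A second, smaller parameter-tracking flaw: in your elliptic step you write the difference equation as $\Delta(c-c_\infty)=n(c-c_\infty)+c_\infty(n-n_\infty)$ and draw $L^2$-coercivity from the boundary term ``since $g>0$''. The resulting constant depends on $\inf_{\partial\Omega}g$, which is not controlled by $C_g$ (only $\|g\|_{\leb\infty}\le C_g$ is assumed), and $n$ has no time-uniform positive lower bound in the paper either. The paper instead splits $nc-n_\infty c_\infty=(n-n_\infty)c+n_\infty(c-c_\infty)$, drops the (signed) boundary term, and gains coercivity from $n_\infty\ge\alpha_\infty\ge m\ure^{-\gamma}/|\Omega|$, using $n_\infty=\alpha_\infty\ure^{c_\infty}$ from \cite{braukhoff_lankeit}; this keeps all constants within the admissible set of parameters and also yields the explicit weighted estimate \eqref{eq:ddt_l2_conv:est_c} that is reused to bound $I_2$. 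With these two repairs your argument would essentially become the paper's proof of Lemma~\ref{lem:ddt_l2_conv} and Lemma~\ref{lem:conv}; as written, it does not establish the theorem in the stated uniformity.
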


\begin{remark}\label{rm:chi}
  Let $\chi \gt 0$.
  If $(n, c)$ solves \eqref{our_system}, then $(n, \chi c)$ solves
  the system obtained by replacing the first equation in \eqref{our_system} by $n_t = \Delta n - \chi \nabla \cdot (n \nabla c)$
  if one also replaces $\gamma$ by $\chi \gamma$
    (and the boundary condition $\partial_\nu n = n \partial_\nu c$ by $\partial_\nu n = \chi n \partial_\nu c$).

  Thus, Theorem~\ref{th:global_ex} provides global, bounded solution also to the latter system
  and Theorem~\ref{th:conv} holds for that system as well,
  provided the condition $\gamma \le \gamma_0$ in \eqref{eq:conv:cond_g_gamma} is replaced by $\chi \gamma \le \gamma_0$.
\end{remark}

\paragraph{Plan of the paper.}
A popular approach toward solvability of chemotaxis systems (see e.g.\ Lemma~3.1 of the survey \cite{BBTW}) consists in rewriting the system as fixed point problem for mild solutions via Duhamel's formula and applying Banach's fixed point theorem on short time intervals, and later inferring global existence from suitable bounds and an extensibility criterion (like (3.3) of \cite{BBTW}). However, the change of the boundary conditions away from homogeneous Neumann boundary conditions renders some of the semigroup estimates on which even the preparations of the first invocation of Banach's theorem rely inapplicable. We will instead use the Leray--Schauder theorem to directly obtain classical solutions of \eqref{our_system} and spend a large part of this article (up to Lemma \ref{lem:boundedness-n-in-Lp}) on the preparation of appropriate a priori bounds. At the same time, the resulting estimates play an essential role also for boundedness and uniform convergence of the solution. (As to these estimates, Lemmata \ref{lem:existence-c(t)}--\ref{lem:ex:c-u} and Lemmata \ref{lem:ex:n}--\ref{lem:n-c1beta} will focus on the subproblems for $c$ and $n$, respectively.) 

The most crucial estimate will arise from a study of the evolution of $\io n^p$ in Lemma~\ref{lem:boundedness-n-in-Lp} 
. As it is time-independent and already enters the Leray--Schauder reasoning (see Lemma~\ref{lem:locex_c1betadata}) of the existence proof, we do not need the usual split into local and global existence results. We will, however, firstly argue for slightly more regular initial data only (Lemma~\ref{lem:locex_c1betadata}) and remove this restriction later in Lemma~\ref{lem:existence_generalinitialdata}. As preparation for this, and also as essential ingredient for the uniqueness proof, the lemmata in between will deal with a Grönwall type inequality.  
A final global boundedness result (for Hölder norms) and the proof of Theorem~\ref{th:global_ex}, by then only consisting of collecting the right previous results, will be given in Section~\ref{sec:globbd-thm-gex}.

The proof of the large-time behaviour, and thus of Theorem~\ref{th:conv}, will be given in Section~\ref{sec:largetime}. Apart from the bounds previously derived, the proof heavily relies on energy inequalities for 
\[
  \int_\Omega (n(\cdot,t) - n_\infty)^2 
\quad  \mbox{and} \quad 
  \int_\Omega |\nabla (c(\cdot ,t) - c_\infty)|^2 + K \int_\Omega (c(\cdot,t) - c_\infty)^2 
\]
with some $K>0$, where $(n_\infty,c_\infty)$ is the nonconstant steady state of \eqref{our_system}. The non-vanishing boundary terms appearing in these calculations due to the Robin boundary condition in \eqref{our_system} can be dealt with by a suitable interpolation inequality (see Appendix~\ref{sec:interpolation}), variants of which will also have been used in the proofs of Lemma~\ref{lem:boundedness-n-in-Lp} and Lemma~\ref{lem:gronwall}.

\section{Local and global existence}
The existence proof will rely on Schauder's fixed point theorem.
To that end we prepare certain estimates for both subproblems in~\eqref{our_system}.

\subsection{A priori estimates for \texorpdfstring{$c$}{c}}
We start by collecting properties of (solutions to)
\begin{align}\label{ceq}
 \begin{cases}
  0 =Δc - nc & \text{in } \Om,\\
  ∂_{ν}c =(γ-c)g & \text{on } ∂\Om
 \end{cases}
\end{align} 
for given $\gamma, g, n$.
Later on, for each $t \ge 0$, $n$ will be the solution to the other subproblem in~\eqref{our_system} at time $t$.

\begin{lemma}\label{lem:existence-c(t)}
Assume \eqref{cond:Om}. Let $γ\gt 0$, $\beta \gt 0$, $C^{1+β}(∂Ω) \ni g \gt 0$ and $n\in C^{β}(\Ombar)$ with $0 \not\equiv n \ge 0$.
Then there is a unique solution $c\in C^{2+β}(\Ombar)$ of \eqref{ceq}.
This solution satisfies 
\begin{equation}\label{c-positive}
0 < c < γ\qquad \text{ in } \Ombar.
\end{equation}
\end{lemma}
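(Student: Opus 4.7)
I would treat \eqref{ceq} as a linear inhomogeneous Robin boundary value problem for the operator $-\Delta + n$, after rewriting the boundary condition in the standard form $\partial_\nu c + gc = \gamma g$, and proceed in three stages: (i) construct a weak $H^1$-solution via Lax--Milgram, (ii) bootstrap regularity to $C^{2+\beta}(\Ombar)$ by Schauder theory for Robin problems, and (iii) derive the pointwise bounds $0 \lt c \lt \gamma$ through energy testing together with the strong maximum principle and Hopf's lemma.

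For (i), I would apply Lax--Milgram on $H^1(\Om)$ to the bilinear form
\[
  a(u,v) = \io \nabla u \cdot \nabla v + \io n u v + \int_{\partial\Om} g u v,
\]
paired with the bounded linear functional $v \mapsto \gamma \int_{\partial\Om} g v$. Continuity is standard, and coercivity follows from $g \ge \min_{\partial\Om} g \gt 0$ together with a Poincar\'e--trace inequality of the type $\|u\|_{L^2(\Om)}^2 \le C(\|\nabla u\|_{L^2(\Om)}^2 + \|u\|_{L^2(\partial\Om)}^2)$. Uniqueness is then immediate: the difference $w$ of two solutions satisfies $a(w,w) = 0$, forcing $w \equiv 0$. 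For step (ii), since $n \in C^\beta(\Ombar)$, $g \in C^{1+\beta}(\partial\Om)$ and $\partial\Om$ is smooth, the classical Schauder theory for Robin data upgrades the weak solution to $c \in C^{2+\beta}(\Ombar)$.

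For (iii), to prove $c \le \gamma$ I would set $w \defs c - \gamma$, which satisfies $\Delta w - n w = n \gamma$ in $\Om$ and $\partial_\nu w + g w = 0$ on $\partial\Om$; testing against $w_+$ and integrating by parts yields
\[
  \io |\nabla w_+|^2 + \io n w_+^2 + \int_{\partial\Om} g w_+^2 + \io n \gamma w_+ = 0,
\]
so that every (nonnegative) term vanishes and hence $w_+ \equiv 0$. An analogous test of \eqref{ceq} against $c_-$ gives $c \ge 0$. The strict inequalities then follow from the strong maximum principle inside $\Om$ together with Hopf's lemma at boundary points: $c \equiv 0$ is excluded by $\partial_\nu c = \gamma g \gt 0$, while $c \equiv \gamma$ is excluded via the PDE because $n \not\equiv 0$, so an interior touching point is forbidden; at a boundary touching point, Hopf would produce $\partial_\nu c$ with a sign incompatible with the Robin condition. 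The main care needed lies in this last step, but the sign contradiction turns out to be direct in both cases.
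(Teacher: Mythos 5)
Your proof is correct, but it takes a partly different route from the paper. The paper obtains existence, uniqueness and $C^{2+\beta}(\Ombar)$ regularity in one stroke by citing the oblique-derivative theory in Lieberman's book (Corollary~4.41 and Theorem~4.40), and then proves $0<c<\gamma$ directly by maximum-principle arguments: a nonconstant solution attains its extrema only on $\partial\Omega$, and Hopf's boundary point lemma at a minimum point with $c\le 0$ (resp.\ a maximum point with $c\ge\gamma$) contradicts the sign of $\partial_\nu c=(\gamma-c)g$, while the constant case $c\equiv\gamma$ is excluded by $n\not\equiv 0$. You instead construct the solution variationally (Lax--Milgram for the Robin form, coercive thanks to $g\ge\min_{\partial\Omega}g>0$ and a Friedrichs-type trace inequality), which is more self-contained and yields uniqueness in the larger weak class for free, and you first derive the non-strict bounds $0\le c\le\gamma$ by testing with $w_+$ and $c_-$ before invoking the strong maximum principle and Hopf's lemma for strictness --- that last step coincides with the paper's argument (note that $w_+\equiv 0$ follows from the simultaneous vanishing of the gradient and boundary terms, i.e.\ constancy plus zero trace, which is what your ``every term vanishes'' step implicitly uses). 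The one point you should flesh out is the passage from the $H^1$ solution to a classical $C^{2+\beta}(\Ombar)$ solution: Schauder theory for the Robin problem is usually stated for classical solutions, so either bootstrap through $W^{2,p}$ (hence $C^{1,\alpha}$) regularity before applying Schauder estimates, or solve the classical problem by the cited oblique-derivative theory and identify it with your weak solution via your uniqueness step; with that remark the argument is complete.
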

\begin{proof}
 Existence and uniqueness follow from \cite[Corollary~4.41]{lieberman_ellipticbook},
 while regularity is provided by \cite[Theorem~4.40]{lieberman_ellipticbook}.
 The estimates in $\Ombar$ can be shown similarly as in \cite[Lemmata~3.2 and 3.3]{braukhoff_lankeit}.
 Nonetheless, we give the basic idea here:
 If $c$ were constant, then $c \equiv \gamma$ due to the boundary condition and positivity of $g$,
 but combined with the differential equation in \eqref{ceq} this contradicts the assumption that $n \not\equiv 0$.
 Thus, the maximum principle \cite[Lemma~3.5]{GT} provides us with some $x_0 \in \partial \Omega$
 with $c(x_0) \lt c(x)$ for all $x \in \Ombar$.
 If $c(x_0) \le 0$, Hopf's boundary point lemma \cite[Lemma~3.4]{GT} would assert $(\gamma - c(x_0)) g(x_0) = \partial_\nu c(x_0) \lt 0$,
 which contradicts $\gamma g(x_0) \gt 0$.
 Again relying on the maximum principle,
 we moreover obtain $x_1 \in \partial \Omega$ satisfying $c(x_1) \gt c(x) \ge 0$ for all $x \in \Ombar$.
 Hence Hopf's boundary point lemma further asserts $(\gamma - c(x_1)) g(x_1) \gt 0$;
 that is, $c(x_1) \lt \gamma$.
\end{proof}

We next collect the following elliptic Schauder estimate. 
\begin{lemma}\label{lem:bounds-c(t)}
Assume \eqref{cond:Om} and let $β\in(0,1)$. For every $Λ>0$, there is $C>0$ such that whenever $g,γ,n$ satisfy the conditions from Lemma
\ref{lem:existence-c(t)} and 
\[
\norm[C^{β}(\Ombar)]{n}\le Λ, \quad \norm[C^{1+β}(∂Ω)]{g}\le Λ
\]
we have
\[
\norm[C^{2+β}{(\Ombar)})]{c}\le Cγ
\]
for the solution $c$ of \eqref{ceq} given by Lemma~\ref{lem:existence-c(t)}.
\end{lemma}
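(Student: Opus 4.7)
The plan is a scaling argument reducing to the case $\gamma = 1$, followed by a standard elliptic Schauder estimate for oblique derivative problems.

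Setting $\tilde c \defs \gamma^{-1} c$, one verifies immediately that $\tilde c$ solves
\begin{align*}
 \begin{cases}
  \Delta \tilde c - n\tilde c = 0 & \text{in } \Om,\\
  \partial_\nu \tilde c + g\tilde c = g & \text{on } \partial\Om,
 \end{cases}
\end{align*}
while Lemma~\ref{lem:existence-c(t)} provides $0 < \tilde c < 1$ throughout $\Ombar$. It therefore suffices to produce a constant $C = C(\Lambda, \Om, \beta)$ with $\norm[C^{2+\beta}(\Ombar)]{\tilde c} \le C$, since the claim then follows from $c = \gamma \tilde c$.

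To that end I would view the equation as $L\tilde c = 0$ for the elliptic operator $L \defs \Delta - n$ with zeroth-order coefficient $-n \in C^\beta(\Ombar)$, and the boundary condition as $B\tilde c = g$ for the oblique boundary operator $B \defs \partial_\nu + g\cdot$, which is strictly oblique since $g > 0$ on $\partial\Om$. Because $\norm[C^\beta(\Ombar)]{n}, \norm[C^{1+\beta}(\partial\Om)]{g} \le \Lambda$, the standard Schauder estimate for oblique derivative problems (a consequence of the same circle of results around \cite[Theorem~4.40]{lieberman_ellipticbook} already invoked in Lemma~\ref{lem:existence-c(t)}) then yields
\[
  \norm[C^{2+\beta}(\Ombar)]{\tilde c}
  \le C_1 \kl{\norm[C^{1+\beta}(\partial\Om)]{g} + \norm[C^0(\Ombar)]{\tilde c}}
  \le C_1(\Lambda + 1)
\]
with $C_1 = C_1(\Lambda, \Om, \beta)$. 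Multiplying by $\gamma$ delivers the asserted inequality.

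I foresee no real obstacle: the whole argument rests on the classical fact that the Schauder constant depends only on the Hölder norms of the coefficients of $L$ and $B$ (and on the obliqueness constant, which is $1$ here). The only piece of housekeeping is to cite a formulation making this dependence explicit. By absorbing $n$ into $L$ as a zeroth-order coefficient rather than letting $nc$ appear as a right-hand side, no Hölder-space interpolation is needed; had we taken the latter route, one application of $\norm[C^\beta(\Ombar)]{\tilde c} \le \eta\norm[C^{2+\beta}(\Ombar)]{\tilde c} + C_\eta \norm[C^0(\Ombar)]{\tilde c}$ with sufficiently small $\eta$ would have sufficed to absorb the top-order term and conclude in the same way.
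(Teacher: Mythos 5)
Your proposal is correct and essentially coincides with the paper's proof: both rest on the oblique-derivative Schauder estimate (with $-n$ as a zeroth-order coefficient, so the constant depends only on $\norm[C^{\beta}(\Ombar)]{n}\le\Lambda$ and $\norm[C^{1+\beta}(\partial\Omega)]{g}\le\Lambda$) combined with the pointwise bound $0<c<\gamma$ from Lemma~\ref{lem:existence-c(t)}; the paper cites \cite[Theorem~2.26]{lieberman_ellipticbook} and keeps the factor $\gamma$ explicit, whereas your normalization $\tilde c=\gamma^{-1}c$ is merely a cosmetic repackaging of the same linearity.
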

\begin{proof}
According to \cite[Theorem 2.26]{lieberman_ellipticbook}, there is $C_1=C_1(Λ,Ω)>0$ such that 
\[
\norm[C^{2+β}(\Ombar)]{c} \le C_1 ( \norm[\Lom{∞}]{c} + \norm[C^{1+β}(\partial \Omega)]{gγ} )
\]
for every solution of \eqref{ceq} with $n,u,g$ as indicated in the lemma. The estimate finally
follows from \eqref{c-positive}.
\end{proof}
Any application of Lemma~\ref{lem:bounds-c(t)} presumes Hölder bounds for $n$. If only $\norm[\Lom p]{n}$ is known to be bounded instead, the following lemma yields $W^{2,p}$-regularity of the solution to \eqref{ceq}. 
\begin{lemma}\label{lem:W2p-estimate-c}
Assume~\eqref{cond:Om}. 
Let $p \in (1, \infty)$ and $γ>0$ as well as $g$ be as in Lemma \ref{lem:existence-c(t)}.
There is $C>0$ such that whenever $n$ is as in Lemma \ref{lem:existence-c(t)}, then
the solution $c$ of \eqref{ceq} satisfies 
\[
\norm[\Wom2p]{c}\le C \kl{1+\norm[\Lom p]{n}}.
\]
\end{lemma}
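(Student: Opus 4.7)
The plan is to invoke the standard $L^p$ elliptic regularity estimate for the Neumann--Laplacian,
\[
  \norm[\Wom{2}{p}]{c} \le C_1 \kl{\norm[\Lom{p}]{\Delta c} + \norm[W^{1-\f{1}{p},p}(∂Ω)]{∂_{ν}c} + \norm[\Lom{p}]{c}},
\]
(see e.g.\ \cite{lieberman_ellipticbook}), applied to the problem~\eqref{ceq}. Since Lemma~\ref{lem:existence-c(t)} supplies the pointwise bound $0 \lt c \lt \gamma$ on $\Ombar$, the interior and zeroth-order contributions on the right-hand side are immediately controlled:
\[
  \norm[\Lom{p}]{\Delta c} = \norm[\Lom{p}]{nc} \le \gamma \norm[\Lom{p}]{n} \qquad \text{and} \qquad \norm[\Lom{p}]{c} \le \gamma |\Om|^{\f{1}{p}}.
\]

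The main obstacle is the boundary term, since $∂_{ν}c = (\gamma - c) g$ still involves the unknown $c$ and has to be measured in the fractional Sobolev space $W^{1-\f{1}{p},p}(∂Ω)$, where a direct $L^\infty$-bound on $c$ is insufficient. To deal with it, I would extend $g \in C^{1+β}(∂Ω)$ to some $\tilde g \in C^1(\Ombar)$, use the product rule in $\Wom{1}{p}$, and then the trace theorem $W^{1,p}(\Om) \hookrightarrow W^{1-\f{1}{p},p}(∂Ω)$ to get
\[
  \norm[W^{1-\f{1}{p},p}(∂Ω)]{(\gamma - c)g} \le C_2 \norm[\Wom{1}{p}]{(\gamma - c)\tilde g} \le C_3 \kl{1 + \norm[\Wom{1}{p}]{c}},
\]
with constants depending only on $\gamma$, $|\Om|$ and $\norm[C^1(\Ombar)]{\tilde g}$.

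Combining these estimates yields $\norm[\Wom{2}{p}]{c} \le C_4 (1 + \norm[\Lom{p}]{n} + \norm[\Wom{1}{p}]{c})$. To close the argument I would apply an Ehrling--type interpolation $\norm[\Wom{1}{p}]{c} \le \eps \norm[\Wom{2}{p}]{c} + C_\eps \norm[\Lom{p}]{c}$, choose $\eps \defs \f{1}{2C_4}$ to absorb the $W^{1,p}$-term into the left-hand side, and reuse the pointwise bound $\norm[\Lom{p}]{c} \le \gamma|\Om|^{\f{1}{p}}$ to arrive at the asserted estimate. The only nonroutine step is the boundary product estimate above; the rest is bookkeeping with classical elliptic regularity and Sobolev interpolation.
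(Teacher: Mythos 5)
Your argument is correct, and all the ingredients you use (the ADN-type $W^{2,p}$ estimate for the Neumann problem with boundary data in $W^{1-\f1p,p}(\partial\Omega)$, extension of $g$, the trace theorem, Ehrling interpolation, and the pointwise bound $0<c<\gamma$ from Lemma~\ref{lem:existence-c(t)}) are available with constants depending only on $p$, $\Omega$, $\gamma$ and $g$, which is all the lemma permits. The paper takes a shorter route: it cites a single $L^p$ estimate for the full boundary value problem (\cite[Theorem~1.19.1]{friedman}), in which the Robin-type boundary operator $\partial_\nu c + gc$ is part of the quoted result, so that $\norm[\Wom2p]{c}\le C(\norm[\Lom p]{nc}+\norm[\Lom p]{c})$ comes out directly and only the bound $0\le c\le\gamma$ remains to be invoked; no separate handling of the boundary term is needed. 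Your version instead treats $\partial_\nu c=(\gamma-c)g$ as inhomogeneous Neumann data still containing the unknown, which forces the extension/product/trace step and the absorption of $\norm[\Wom1p]{c}$ via Ehrling — more work, but it has the minor advantage of deriving the Robin estimate from the more commonly quoted Neumann estimate rather than relying on a reference that covers oblique/Robin boundary operators. Both proofs ultimately rest on the same two pillars: elliptic $L^p$ regularity and the maximum-principle bound $0<c<\gamma$.
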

\begin{proof}
We employ \cite[Theorem~1.19.1]{friedman}
to find $C>0$ so that for every $n,c$ as in the lemma 
\[
\norm[\Wom2p]{c} \le C\kl{\norm[\Lom p]{nc}+\norm[\Lom p]{c}}.
\]
We furthermore use that according to Lemma \ref{lem:existence-c(t)}, $0\le c\le γ$ in $\Omega$.
\end{proof}

The following lemma will be used to show H\"older regularity of solutions to 
\eqref{ceq} with respect to time $t$. 

\begin{lemma}\label{lem:regularity:c(t)}
Assume \eqref{cond:Om}. Let $γ>0$, $\beta \in (0, 1)$ and $C^{1+β}(∂Ω) \ni g \gt 0$.
For every $Λ>0$ there is $C>0$
such that whenever $n_1,n_2 \in \con\beta$ are nonnegative but positive at some point,
$c_1,c_2$ denote the corresponding solutions to \eqref{ceq} given by Lemma~\ref{lem:existence-c(t)}
and 
\[
\norm[C^{β}(\Om)]{n_1}\le Λ, \quad \norm[\Lom{N}]{n_2} \le Λ, 
\]
then 
\[
\norm[C^{2+β}(\Ombar)]{c_1-c_2}\le C \kl{\norm[C^{β}(\Ombar)]{n_1-n_2}}
\] 
\end{lemma}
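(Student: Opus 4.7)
The plan is to linearise the problem by working with $w \defs c_1 - c_2$ and applying a Schauder estimate, after first ensuring that the data on the right-hand side has the correct $C^\beta$-regularity. Subtracting the two elliptic equations yields
\begin{align*}
  \Delta w - n_1 w &= (n_1 - n_2) c_2 &&\text{in } \Om, \\
  ∂_ν w + g w &= 0 &&\text{on } ∂\Om.
\end{align*}
This is a linear oblique-derivative problem whose zero-order coefficient $-n_1$ is in $C^\beta(\Ombar)$ with norm bounded by $\Lambda$, while $g \in C^{1+\beta}(∂\Om)$ with $\min_{∂\Om} g \gt 0$ makes the problem coercive.

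Before invoking Schauder, one needs a uniform $C^\beta$-bound on $c_2$ (the asymmetry of the hypotheses only gives $\|n_2\|_{\Lom{N}} \le Λ$, not a Hölder bound on $n_2$). Lemma~\ref{lem:W2p-estimate-c} provides $\|c_2\|_{\Wom{2}{N}} \le C(1 + \|n_2\|_{\Lom{N}}) \le C(1+Λ)$, and since $\Wom{2}{N} \embed C^{\beta}(\Ombar)$ for any $\beta \in (0,1)$, this yields $\|c_2\|_{C^\beta(\Ombar)} \le C$ with $C$ depending only on $Λ, β, γ, \Om$. The product rule for Hölder norms then gives
\[
  \|(n_1-n_2)c_2\|_{C^\beta(\Ombar)} \le C\, \|n_1 - n_2\|_{C^\beta(\Ombar)}\, \|c_2\|_{C^\beta(\Ombar)}  \le C'\, \|n_1-n_2\|_{C^\beta(\Ombar)},
\]
so the right-hand side of the PDE for $w$ is controlled by the quantity we want on the right-hand side of the conclusion.

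To bound $\|w\|_{L^\infty}$, I would exploit coercivity: testing the equation by $w$, integrating by parts and using $\partial_\nu w = -gw$, one finds $\|\nabla w\|_{L^2}^2 + \io n_1 w^2 + \int_{∂\Om} g\, w^2 = -\io (n_1-n_2)c_2\, w$. Together with the Poincaré-type inequality $\|w\|_{\Lom 2}^2 \le C(\|\nabla w\|_{\Lom 2}^2 + \|w\|_{L^2(∂\Om)}^2)$ and $\min g \gt 0$, this produces $\|w\|_{\Lom 2} \le C \|(n_1-n_2)c_2\|_{\Lom 2} \le C\gamma \|n_1 - n_2\|_{\Lom\infty}$. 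A finite bootstrap via the $W^{2,p}$ estimate (same kind as in Lemma~\ref{lem:W2p-estimate-c}) and Sobolev embeddings upgrades this to $\|w\|_{\Lom\infty} \le C \|(n_1-n_2)c_2\|_{\Lom\infty} \le C \|n_1 - n_2\|_{C^\beta(\Ombar)}$, using $\|c_2\|_{\Lom\infty} \le γ$ from Lemma~\ref{lem:existence-c(t)}.

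With both pieces in hand, the Schauder estimate for oblique-derivative problems (in the form used in Lemma~\ref{lem:bounds-c(t)}, i.e.\ \cite[Theorem~2.26]{lieberman_ellipticbook}, applied with coefficient $-n_1 \in C^\beta$ of norm $\le Λ$) gives
\[
 \|w\|_{C^{2+\beta}(\Ombar)} \le C \kl{ \|w\|_{\Lom\infty} + \|(n_1-n_2)c_2\|_{C^\beta(\Ombar)} } \le C\, \|n_1 - n_2\|_{C^\beta(\Ombar)},
\]
which is the claim. The main obstacle is the $\Lom\infty$-bound in the middle step: because $n_1$ may vanish, the zero-order coefficient $-n_1$ has the wrong sign for a straightforward interior strong maximum principle, so coercivity has to be extracted from the Robin boundary contribution (via the energy identity plus the boundary-trace Poincaré inequality) rather than read off pointwise.
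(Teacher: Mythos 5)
Your proposal is correct and follows the same skeleton as the paper's proof: subtract the two problems to obtain the oblique-derivative problem for $w=c_1-c_2$ with right-hand side $(n_1-n_2)c_2$ and boundary condition $\partial_\nu w+gw=0$, and then apply the Schauder estimate \cite[Theorem~2.26]{lieberman_ellipticbook}, whose constant depends only on $\|n_1\|_{C^\beta(\Ombar)}\le\Lambda$ and on $g$. The one place where you genuinely diverge is the sup bound on $w$: the paper simply invokes the maximum-principle estimate \cite[Theorem~1.9]{lieberman_ellipticbook} (applicable precisely because $g>0$), which directly gives $w\le C\|(n_1-n_2)c_2\|_{L^N(\Omega)}$ and $-w\le C\|(n_2-n_1)c_1\|_{L^N(\Omega)}$, whereas you extract the needed coercivity by hand via the energy identity, the boundary-trace Poincar\'e inequality and a finite $W^{2,p}$ bootstrap. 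That is a legitimate, more elementary and self-contained alternative, at the cost of a longer argument; your closing remark that an interior maximum principle alone cannot work because $n_1$ may vanish is exactly the reason the paper resorts to the boundary-coercive citation. You also make explicit a step the paper leaves implicit, namely the uniform $C^\beta(\Ombar)$ bound on $c_2$ needed to control $\|(n_1-n_2)c_2\|_{C^\beta(\Ombar)}$, which you get from Lemma~\ref{lem:W2p-estimate-c} with $p=N$ together with $W^{2,N}(\Omega)\hookrightarrow C^\beta(\Ombar)$; just note that this literally requires $N\ge 2$ (Lemma~\ref{lem:W2p-estimate-c} assumes $p>1$), while in the remaining case $N=1$ a one-line direct estimate of $c_2'$ from the equation, $0\le c_2\le\gamma$ and the Robin condition yields the same H\"older bound, so this is only a cosmetic caveat.
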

\begin{proof}
The difference $\tilde{c}\defs c_1-c_2$ satisfies 
\[
  \begin{cases}
    (Δ-n_1)\tilde{c} = f           & \text{in } Ω,\\
    ∂_{ν}\tilde{c} = -\tilde{c} g  & \text{on } ∂Ω,
  \end{cases}
\]
where $f=(n_1-n_2)c_2$. 
Then \cite[Theorem~2.26]{lieberman_ellipticbook} provides $C_1(\Lambda)>0$ 
such that 
\[
\norm[C^{2+β}(\Ombar)]{\tilde c}\le C_1(\Lambda)\kl{\norm[C^{β}(\Ombar)]{f} + \norm[\Lom{∞}]{\tilde c}}.
\]
By \cite[Theorem~1.9]{lieberman_ellipticbook},
thanks to our assumption that $g \gt 0$,
we moreover have $C_2>0$ 
such that 
\[
\tilde c \le C_2 \norm[\Lom N]{(n_1-n_2)c_2} \quad\text{ and that } \quad
-\tilde c \le C_2 \norm[\Lom N]{(n_2-n_1)c_1}.
\]
By combining this with Lemma~\ref{lem:existence-c(t)}, we indeed obtain the statement.
\end{proof}

We finally establish that for all smooth functions $n$ now depending on time in addition to space, 
we can find a unique function $c$ 
which solves \eqref{ceq} in the classical sense for every time $t$. 

\begin{lemma}\label{lem:ex:c-u}
  Assume \eqref{cond:Om} and let $\gamma \gt 0$, $\beta \in (0, 1)$, $C^{1+β}(\partial \Omega) \ni g \gt 0$
  and $T \gt 0$.  For any $\Lambda \gt 0$ there is $C \gt 0$ such that the following holds: For all $n\in C^{β,\f{β}2}(\Ombar\times[0,T])$
  with $\norm[C^{β,\f{β}2}(\Ombar\times{[0,T]})]{n}\le Λ$ there is a unique function
  $c\in C^{\f{β}4}([0,T];C^{2+\f{β}2}(\Ombar))$ 
  that solves \eqref{ceq} for every $t\in[0,T]$.
  Additionally, this function satisfies $\norm[C^{\f{β}4}({[0,T]};C^{2+\f{β}2}(\Ombar))]{c}\le C$.
\end{lemma}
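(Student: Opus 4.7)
The plan is to piece together pointwise-in-time existence from Lemma~\ref{lem:existence-c(t)}, a uniform spatial Schauder bound from Lemma~\ref{lem:bounds-c(t)}, and temporal Hölder regularity obtained by adapting the argument of Lemma~\ref{lem:regularity:c(t)} via a Hölder interpolation trick. First, for each $t\in[0,T]$ the joint parabolic-Hölder condition forces $n(\cdot,t)\in C^{\beta}(\Ombar)$ with $\|n(\cdot,t)\|_{C^{\beta}(\Ombar)}\leq \Lambda$; assuming — as will be the case in the later applications, where $n$ arises from the parabolic subproblem with strictly positive initial data — that $n(\cdot,t)$ is nonnegative and not identically zero for every $t\in[0,T]$, Lemma~\ref{lem:existence-c(t)} produces a unique $c(\cdot,t)\in C^{2+\beta}(\Ombar)$ solving~\eqref{ceq}, and Lemma~\ref{lem:bounds-c(t)} yields a $t$-independent bound $\|c(\cdot,t)\|_{C^{2+\beta}(\Ombar)}\leq C_{1}$.

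For Hölder continuity in $t$, fix $s,t\in[0,T]$ and set $w\defs c(\cdot,t)-c(\cdot,s)$, which solves
\[
\begin{cases}
\Delta w - n(\cdot,t)\,w = (n(\cdot,t)-n(\cdot,s))\,c(\cdot,s) & \text{in } \Om,\\
\partial_\nu w = -w g & \text{on } \partial\Om.
\end{cases}
\]
Just as in the proof of Lemma~\ref{lem:regularity:c(t)}, \cite[Theorem~1.9]{lieberman_ellipticbook} combined with $0\leq c\leq\gamma$ (from Lemma~\ref{lem:existence-c(t)}) gives $\|w\|_{L^{\infty}(\Om)}\leq C_{2}\|n(\cdot,t)-n(\cdot,s)\|_{L^{\infty}(\Om)}\leq C_{2}\Lambda |t-s|^{\beta/2}$. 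Since moreover $\|n(\cdot,t)-n(\cdot,s)\|_{C^{\beta}(\Ombar)}\leq 2\Lambda$, the standard Hölder interpolation $[f]_{C^{\beta/2}}\leq C\|f\|_{L^{\infty}}^{1/2}[f]_{C^{\beta}}^{1/2}$ produces $\|n(\cdot,t)-n(\cdot,s)\|_{C^{\beta/2}(\Ombar)}\leq C_{3}|t-s|^{\beta/4}$, and hence, by the product rule in Hölder spaces together with the uniform $C^{2+\beta}$-bound on $c(\cdot,s)$, also $\|(n(\cdot,t)-n(\cdot,s))c(\cdot,s)\|_{C^{\beta/2}(\Ombar)}\leq C_{4}|t-s|^{\beta/4}$. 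The Schauder estimate \cite[Theorem~2.26]{lieberman_ellipticbook} applied to $w$ with exponent $\beta/2$ then yields $\|w\|_{C^{2+\beta/2}(\Ombar)}\leq C_{5}(|t-s|^{\beta/4}+|t-s|^{\beta/2})\leq C_{6}|t-s|^{\beta/4}$, i.e.\ precisely $\|c\|_{C^{\beta/4}([0,T];C^{2+\beta/2}(\Ombar))}\leq C$; uniqueness is immediate from the pointwise uniqueness in Lemma~\ref{lem:existence-c(t)}.

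The main obstacle is that a direct invocation of Lemma~\ref{lem:regularity:c(t)} with $n_{1}=n(\cdot,t)$ and $n_{2}=n(\cdot,s)$ is insufficient, because only the $L^{\infty}$-norm of $n(\cdot,t)-n(\cdot,s)$, not its $C^{\beta}$-norm, decays with $|t-s|$. Sacrificing half of the available spatial Hölder exponent to gain a temporal exponent of $\beta/4$, and then invoking the Schauder estimate at the reduced spatial exponent $\beta/2$ instead of the full $\beta$, is exactly what delivers the two-parameter regularity claimed by the lemma.
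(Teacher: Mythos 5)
Your proof is correct and follows essentially the same route as the paper: solve pointwise in time via Lemma~\ref{lem:existence-c(t)}, bound uniformly via Lemma~\ref{lem:bounds-c(t)}, and obtain temporal H\"older continuity from the elliptic estimate for the difference $c(\cdot,t)-c(\cdot,s)$ --- the paper simply cites Lemma~\ref{lem:regularity:c(t)} (implicitly with $\beta$ replaced by $\tfrac{\beta}{2}$) together with the interpolation fact $n\in C^{\beta/4}([0,T];C^{\beta/2}(\Ombar))$, which you derive explicitly. The extra hypothesis you flag, that $n(\cdot,t)$ be nonnegative and not identically zero, is likewise tacitly assumed in the paper's proof, since Lemma~\ref{lem:existence-c(t)} requires it.
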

\begin{proof}
  For each $t\in [0,T]$, Lemma \ref{lem:existence-c(t)} defines $c(t)$.
  The regularity of this function  with respect to the time variable results from Lemma \ref{lem:regularity:c(t)}
  if applied with $c_1=c(\cdot,t_1)$ and $c_2=c(\cdot,t_2)$ for $t_1,t_2\in[0,T]$ due to $n\in C^{\f{β}4}([0,T];C^{\f{β}2}(\Ombar))$.
\end{proof}

\subsection{A priori estimates for \texorpdfstring{$n$}{n}}
In this subsection we establish a priori estimates for solutions $n$ 
of 
\begin{align}\label{neq}
  \begin{cases}
    n_t = \Delta n - \nabla \cdot (n \nabla c) & \text{in $\Omega \times (0, T)$}, \\
    \partial_\nu n = n \partial_\nu c & \text{on $\partial \Omega \times (0, T)$}, \\
    n(\cdot, 0) = n_0 & \text{in $\Omega$}
  \end{cases}
\end{align}
for given $c$ and $n_0$.  
We first verify existence of solutions to \eqref{neq}. Here and below, we understand the term `weak solution' in the sense of \cite[p.\ 136]{lieberman_parabolicbook}. 
\begin{lemma}\label{lem:ex:n}
Assume \eqref{cond:Om}. Let $β\in(0,1)$, $T\in(0,∞)$, $∇c\in C^{β,\f{β}2}(\Ombar\times[0,T])$, and let $n_0\in C^{1+β}(\Ombar)$ satisfy $∂_{ν}n_0=n_0∂_{ν}c(\cdot,0)$ on $∂Ω$.
Then there is a (weak and) classical solution 
\begin{align}\label{eq:ex:n:reg}
  n\in X \defs 
  \left\{\,
    \psi \in C^{1,\f{1}2}(\Ombar\times[0,T])\cap C^{2,1}(\Om\times(0,T)) \cap L^1((0, T); \sob21) :
    \psi_t \in L^1(\Omega \times (0, T))
  \,\right\}
\end{align}
of \eqref{neq}.
This solution 
is unique in $C^{1,\f12}(\Ombar\times[0,T])$.

Moreover, for every $Λ>0$ there is $C>0$ such that whenever the conditions 
\[
 β\le Λ, \quad \normm{C^{β,\f{β}2}(\Ombar\times[0,T])}{∇c\cdot ν}\le Λ, \quad \normm{C^{β,\f{β}2}(\Ombar\times[0,T])}{∇c}\le Λ, \quad \norm[C^{1+β}(\Ombar)]{n_0}\le Λ
\]
are fulfilled, then the estimate 
\begin{equation}\label{intermediateschauderestimate}
 \normm{C^{1+β,\f{1+β}2}(\Ombar\times[0,T])}{n} \le C
\end{equation}
holds true.
\end{lemma}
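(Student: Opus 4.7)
The plan is to view problem \eqref{neq} as a linear parabolic problem for $n$ with coefficients determined by the prescribed function $c$. Since $\nabla c$, but not necessarily $\Delta c$, is assumed to be Hölder continuous, I would keep the equation in divergence form,
\[
  n_t = \nabla \cdot (\nabla n - n \nabla c) \qquad \text{in } \Omega \times (0,T),
\]
and interpret the boundary condition $\partial_\nu n = n \partial_\nu c$ as the homogeneous conormal condition $(\nabla n - n \nabla c) \cdot \nu = 0$ on $\partial \Omega \times (0,T)$. In this form the drift coefficient $\nabla c$ belongs to $C^{\beta,\beta/2}(\Ombar \times [0,T])$ and the boundary coefficient $\partial_\nu c$ to $C^{\beta,\beta/2}(\partial\Omega \times [0,T])$, placing the problem squarely in the framework of linear divergence-form parabolic equations with Hölder coefficients and oblique (Robin) derivative boundary data.

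For existence, regularity, and the Schauder estimate \eqref{intermediateschauderestimate} I would appeal to the classical linear parabolic theory, as developed for instance in Chapter~IV of Lieberman's parabolic book (in particular Theorems~4.31 and~5.18) or in Chapter~IV of Lady\v{z}enskaja--Solonnikov--Ural'ceva. Together with the compatibility assumption $\partial_\nu n_0 = n_0 \partial_\nu c(\cdot,0)$, which avoids an initial layer at $t=0$, these results yield a classical solution $n \in C^{1+\beta,(1+\beta)/2}(\Ombar \times [0,T]) \cap C^{2,1}(\Omega \times (0,T))$ together with a $C^{1+\beta,(1+\beta)/2}$-bound whose constant depends only on $\Lambda$, $\Omega$, and $T$. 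The auxiliary hypothesis $\beta \le \Lambda$ is precisely what is needed to keep the $\beta$-dependence of the Schauder constants harmless. The additional integrability statements $n \in L^1((0,T); W^{1,2}(\Omega))$ and $n_t \in L^1(\Omega \times (0,T))$ then follow immediately from the attained regularity, while the weak-formulation requirement reduces to a routine integration by parts against test functions.

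For uniqueness in $C^{1,1/2}(\Ombar \times [0,T])$, given two such solutions $n_1,n_2$, the difference $w \defs n_1-n_2$ satisfies the same linear divergence-form equation with zero initial datum and the homogeneous conormal condition $(\nabla w - w \nabla c) \cdot \nu = 0$. Testing with $w$ and integrating by parts gives
\[
  \tfrac12 \ddt \io w^2 + \io |\nabla w|^2 = \io w\, \nabla c \cdot \nabla w,
\]
and Young's inequality together with the $L^\infty$-bound on $\nabla c$ and Gronwall's lemma then forces $w \equiv 0$. The main technical obstacle throughout is verifying that the Schauder constants invoked from the linear theory can indeed be taken uniform in $\beta$ (and in all the data whose Hölder norms are controlled by $\Lambda$); this is exactly what the hypothesis $\beta \le \Lambda$ secures, after which everything else reduces to citing the appropriate theorem.
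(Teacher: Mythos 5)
Your overall framing -- keep the equation in divergence form, read the boundary condition as the homogeneous conormal condition $(\nabla n - n\nabla c)\cdot\nu = 0$, and extract existence together with the estimate \eqref{intermediateschauderestimate} from linear parabolic theory -- is the same as the paper's, which invokes the conormal-problem theorem \cite[Theorem~6.46]{lieberman_parabolicbook}. But the results you actually cite do not fit the hypotheses: Lieberman's Theorems 4.31/5.18 and LSU Chapter~IV concern non-divergence-form oblique derivative problems and require H\"older continuity of \emph{all} coefficients, in particular of the zeroth-order coefficient $\Delta c$ which appears as soon as $\nabla\cdot(n\nabla c)$ is expanded -- exactly the regularity you correctly observed is not assumed here. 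The theory that does apply under the stated assumptions is the divergence-form/conormal one, and it delivers only a weak solution of class $C^{1+\beta,\frac{1+\beta}{2}}(\Ombar\times[0,T])$, not a classical one.

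This is where the genuine gap lies: the interior regularity $n\in C^{2,1}(\Omega\times(0,T))$ and the membership $n_t\in L^1(\Omega\times(0,T))$ required for $n\in X$ do \emph{not} ``follow immediately from the attained regularity''. Temporal H\"older continuity with exponent $\frac{1+\beta}{2}<1$ gives no time derivative at all, and the conormal Schauder estimate controls no second spatial derivatives, so membership in $X$ and the classical solution property are simply unproven in your proposal. The paper spends the second half of its proof on precisely this upgrade: parabolic $L^p$ theory (\cite[Theorem~7.20]{lieberman_parabolicbook}) yields $n_t,\Delta n\in L^p$ away from $t=0$, and an interior localization ($u=\varphi n$ with a cutoff $\varphi\in C_c^\infty(\Omega\times(0,T))$), combined with the identification of $\varphi n$ with the solution furnished by \cite[Theorems~III.3.4 and IV.5.2]{LSU}, gives local $C^{2+\beta,1+\frac{\beta}{2}}$ regularity. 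An argument of this type is indispensable. A minor further point: your energy/Gronwall proof of uniqueness is a legitimate alternative to citing the uniqueness assertion of \cite[Theorem~6.46]{lieberman_parabolicbook}, but for competitors that are merely $C^{1,\frac12}(\Ombar\times[0,T])$ the identity $\frac12\ddt\io w^2+\io|\nabla w|^2=\io w\,\nabla c\cdot\nabla w$ must itself be justified (e.g.\ by Steklov averaging or a weak-testing device in the spirit of Lemma~\ref{lm:weak_testing}), since such solutions need not have $w_t\in L^2$.
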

\begin{proof}
Existence of a (weak) solution with $C^{1+β}$-regularity and the uniqueness assertion result from \cite[Theorem~6.46]{lieberman_parabolicbook}; the same item also yields the estimate \eqref{intermediateschauderestimate}. 
That $n_t$ and $Δn$ belong to $L^p(\Ombar\times(τ,T))$ for $p\in[1,\f1{1-β})$ and every $τ\in(0,T)$ is a consequence of \cite[Theorem~7.20]{lieberman_parabolicbook}.
In order to obtain $n\in C^{2,1}(\Om\times(0,T))$, we pick some $φ\in C_c^{\infty}(\Om\times(0,T))$ and observe that $u=nφ$ solves the homogeneous Dirichlet problem with zero initial data for $u_t=Δu-∇\cdot(au)+f$ with $a=∇c\in C^{1+β,\f{β}2}(\Ombar\times[0,T])$ and $f=-nΔφ-2∇n∇φ+n∇c∇φ+nφ_t\in C^{β,\f{β}2}(\Ombar\times[0,T])$. By \cite[Theorem~III.3.4]{LSU}, $nφ$ therefore has to coincide with the solution provided by \cite[Theorem~IV.5.2]{LSU}, which belongs to $C^{2+β,1+\f{β}2}(\Ombar\times[0,T])$.
\end{proof}

  As Lemma~\ref{lem:ex:n} does not assert sufficient regularity of $n_t$ to justify calculations of the form
$\ddt \io n^p = p \io n^{p-1} n_t$ (almost everywhere) for $p \in [1, \infty)$, we need to replace typical testing procedures
by certain weak counterparts thereof. These are provided by the following quite general lemma.

\begin{lemma}\label{lm:weak_testing}
  Assume \eqref{cond:Om}.   
  Let  $T \in (0, \infty)$, $n_0 \in \con0$
  and $f \in C^0(\Ombar\times[0,T];ℝ^N)$.

  Suppose that $n \in L^1((0, T); \sob21)\cap C^0(\Ombar\times(0,T))$ with $n_t \in L^1(\Omega \times (0, T))$ is a weak solution (according to \cite[p.~136]{lieberman_parabolicbook}) of
  \begin{align*}
    \begin{cases}
      n_t = \nabla\cdot(\nabla n+f) & \text{in $\Omega \times (0, T)$}, \\
      \partial_\nu n = - f\cdot ν & \text{on $\partial \Omega \times (0, T)$}, \\
      n(\cdot, 0) = n_0 & \text{in $\Omega$}.
    \end{cases}
  \end{align*}

  Then
  \begin{align*}
      \frac{\psi(t)}{p} \io n^p(\cdot, t)
    = \frac{\psi(0)}{p} \io n_0^p
      + \f1p\int_0^t \left( \psi' \io n^p \right)
      - \int_0^t \left( \psi \io (\nabla n + f) \cdot \nabla n^{p-1} \right)
  \end{align*}
  for all $t \in (0, T)$, $p \in \N$ and $\psi \in C^1([0, T])$.
\end{lemma}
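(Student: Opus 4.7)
The identity in question is the integrated form of the formal chain rule $\ddt[\f{\psi(\tau)}p \io n^p] = \f{\psi'(\tau)}p \io n^p + \psi(\tau) \io n^{p-1} n_\tau$ combined, via the PDE and integration by parts in $x$, with $\io n^{p-1} n_\tau = -\io \nabla n^{p-1}\cdot(\nabla n + f)$, where the boundary contribution $\int_{\partial\Omega} n^{p-1}(\nabla n+f)\cdot\nu$ vanishes thanks to $(\nabla n + f)\cdot\nu = 0$. Since $n_\tau$ is only an $L^1$-function and $n$ is merely continuous on $\Ombar\times(0,T)$, the product $n^{p-1} n_\tau$ must be handled in an averaged sense.

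I will regularize $n$ in the time variable by Steklov averaging. Fix $t\in(0,T)$ and, for $h\in(0,T-t)$ and $\tau\in[0,t]$, set
\[
  n_h(x,\tau)\defs\f1h\int_\tau^{\tau+h} n(x,s)\,ds, \qquad f_h(x,\tau)\defs\f1h\int_\tau^{\tau+h} f(x,s)\,ds.
\]
Then $\partial_\tau n_h(\cdot,\tau)=h^{-1}\kl{n(\cdot,\tau+h)-n(\cdot,\tau)}\in L^1(\Om)$, and the spatial regularity $n\in L^1((0,T);\sob21)$ transfers to $n_h$. A standard Steklov argument applied to the weak formulation of the $n$-equation shows that for a.e.\ $\tau\in(0,t)$ the regularized function satisfies $\partial_\tau n_h = \nabla\cdot(\nabla n_h + f_h)$ in the weak spatial sense with natural boundary condition $\partial_\nu n_h=-f_h\cdot\nu$. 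By continuity of $n$ on $\Ombar\times(0,T)$, $n_h$ is uniformly bounded on $\Ombar\times[\sigma,t]$ for any $\sigma\in(0,t)$ and all sufficiently small $h$, so $n_h^{p-1}$ is an admissible multiplier there. The chain-rule identity $\partial_\tau\io n_h^p = p\io n_h^{p-1}\partial_\tau n_h$ then holds for a.e.\ $\tau\in(\sigma,t)$; multiplying by $\psi$, integrating over $(\sigma,t)$, integrating by parts in both variables and invoking the boundary condition yields the claimed identity with $(n,f)$ replaced by $(n_h,f_h)$ and with $\sigma$ in place of $0$ as the lower endpoint.

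It remains to let $h\to 0$ and then $\sigma\to 0^+$. For the former, standard Lebesgue-point properties of Steklov averages provide $n_h\to n$ and $f_h\to f$ uniformly on $\Ombar\times[\sigma,t]$ as well as $\nabla n_h\to \nabla n$ and $\partial_\tau n_h\to n_\tau$ in $L^1(\Om\times(\sigma,t))$, which together with the uniform bound on $n$ on this strip is sufficient to pass to the limit in every term. For the latter, the continuous attainment of $n_0$ built into the weak-solution definition of \cite[p.~136]{lieberman_parabolicbook}, combined with $n_0\in\con0$, allows me to conclude $\io n^p(\cdot,\sigma)\to\io n_0^p$ as $\sigma\to 0^+$. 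This endpoint control is the main technical obstacle, since the lemma only assumes continuity of $n$ on the open time interval; once it is achieved, rearranging the resulting terms produces the stated identity.
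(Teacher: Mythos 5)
Your Steklov-averaging strategy is a genuinely different route from the paper's: there, the identity is obtained in one step by inserting $\varphi \defs n^{p-1}\psi$ (admissible by approximation, since $n_t \in L^1(\Omega\times(0,T))$) into the defining integral identity for weak solutions, and then rewriting $\int_0^t \io n\,(n^{p-1}\psi)_t$ by an elementary integration by parts in time; the initial datum and the conormal boundary condition are already encoded in that identity, so no time-regularization, no lower endpoint $\sigma$ and no double limit are needed. Your plan of averaging in time, testing the averaged equation with $n_h^{p-1}$ and letting $h\to 0$ is legitimate in principle and amounts to reproving that admissibility by hand.

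As written, however, there is a genuine gap exactly at the step you yourself call the main obstacle. Lieberman's notion of weak solution provides the initial condition only in an integrated (test-function, at best $L^2$-type) sense; it does not give continuity of $t\mapsto \io n^p(\cdot,t)$ at $t=0$ for every $p\in\N$. Under the lemma's hypotheses $n$ is continuous only on $\Ombar\times(0,T)$, and no bound on $n$ near $t=0$ is assumed; from $n_t\in L^1(\Omega\times(0,T))$ you can deduce $n(\cdot,\sigma)\to n_0$ in $L^1(\Omega)$ as $\sigma\to 0^+$, but this does not yield $\io n^p(\cdot,\sigma)\to\io n_0^p$ for larger $p$, nor the convergence of your time integrals from $(\sigma,t)$ to $(0,t)$, without additional control near $t=0$ that your argument does not supply — so the appeal to ``continuous attainment built into the weak-solution definition'' does not close this step. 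A secondary soft spot is the limit $h\to0$ in $\int_\sigma^t \psi \io (\nabla n_h+f_h)\cdot\nabla n_h^{p-1}$, which contains $n_h^{p-2}|\nabla n_h|^2$: the convergences you list ($\nabla n_h\to\nabla n$ in $L^1$, uniform convergence of $n_h$ on the strip) are not by themselves enough for this quadratic term; one needs, e.g., convergence of $\nabla n_h(\cdot,\tau)$ in $\leb2$ for a.e.\ $\tau$ plus a domination or Vitali argument, which presupposes square integrability of $\nabla n$ on $\Omega\times(\sigma,t)$ not directly contained in the stated regularity (admittedly the lemma is tailored to the smoother solutions it is later applied to, but your claim that the listed convergences suffice ``in every term'' overreaches). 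The paper's device of testing with $n^{p-1}\psi$ keeps the initial datum paired with the weak formulation itself and therefore never has to perform these two limit procedures separately.
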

\begin{proof}
 We fix $t \in (0, T)$, $p \in \N$ and $\psi \in C^1([0, T])$.
  The asserted regularity of $n_t$ and $\psi$ implies that by means of approximaion arguments, $\varphi \defs n^{p-1} \psi$ can be used as test function in the defining integral identity \cite[p.~136]{lieberman_parabolicbook} for weak solutions, that is, 
  \begin{align}\label{eq:weakform}
      \left[ \io n n^{p-1} \psi \right]_0^t
      - \int_0^t \io n (n^{p-1} \psi)_t
    = - \int_0^t \left( \psi \io (\nabla n + f) \cdot \nabla n^{p-1} \right).
  \end{align}
  For the second term therein we have 
   \begin{align*}
       \int_0^t \io n (n^{p-1} \psi)_t
     &= \int_0^t\io n^pψ'+\f{p-1}p\int_0^t\io (n^p)_tψ\\
     &= \int_0^t\io n^pψ' -\f{p-1}p\int_0^t\io n^pψ' + \f{p-1}p \left[ψ\io n^p\right]_0^t,
   \end{align*}
  which, when inserted into \eqref{eq:weakform}, implies the statement.
\end{proof} 

Since the first equation in \eqref{neq} is of divergence form, 
we have the mass conservation law of the solution $n$. 
\begin{lemma}\label{lm:mass_conservation}
  Assume \eqref{cond:Om} and let $n_0 \in \con0$ as well as $c\in C^{2+β,\f{β}2}(\Ombar\times[0,T])$ with $T>0$.
  Every solution $n\in C^0(\Ombar\times[0,T])\cap C^{2,1}(\Om\times(0,T))$ of \eqref{neq} satisfies 
  \begin{align*}
    \io n(\cdot, t) = \io n_0
    \qquad \text{for all $t \in [0, T)$}.
  \end{align*}
\end{lemma}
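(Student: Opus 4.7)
My plan is to apply Lemma~\ref{lm:weak_testing} with the choices $p = 1$, $\psi \equiv 1$, and source term $f \coloneqq -n \nabla c$. Under these choices, $\nabla n^{p-1} = \nabla 1 \equiv 0$, so the final integral in the conclusion of Lemma~\ref{lm:weak_testing} vanishes, and $\psi' \equiv 0$ eliminates the $\psi'$-term as well. The conclusion then reduces precisely to the desired identity
\[
  \io n(\cdot, t) = \io n_0
  \qquad \text{for all } t \in (0, T),
\]
while the case $t = 0$ follows from the initial condition.

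Matching \eqref{neq} with the framework of Lemma~\ref{lm:weak_testing} is direct: writing the first equation as $n_t = \nabla \cdot (\nabla n + f)$ with $f = -n \nabla c$, the Robin-type boundary condition $\partial_\nu n = n \partial_\nu c$ becomes $\partial_\nu n = -f \cdot \nu$. The continuity $f \in C^0(\Ombar \times [0, T]; \R^N)$ is immediate from $n \in C^0(\Ombar \times [0, T])$ together with $\nabla c \in C^0(\Ombar \times [0, T]; \R^N)$, the latter being inherited from the assumption $c \in C^{2+\beta, \beta/2}(\Ombar \times [0, T])$.

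The main obstacle is verifying the remaining regularity hypotheses of Lemma~\ref{lm:weak_testing}: that $n \in L^1((0, T); W^{2,1}(\Omega))$ with $n_t \in L^1(\Omega \times (0, T))$, and that $n$ solves the reformulated problem in the weak sense of \cite[p.~136]{lieberman_parabolicbook}. The weak-solution identity can be obtained by testing the classical PDE (valid in $\Omega \times (0, T)$ since $n \in C^{2,1}(\Omega \times (0, T))$) against a smooth test function, integrating by parts in space, and using the Robin-type boundary condition to cancel the boundary integrand. The integrability hypotheses can be extracted by regarding $n$ as a solution of a linear drift-diffusion equation with Hölder-continuous coefficient $\nabla c$ and invoking the same parabolic $L^p$-theory (cf.\ \cite[Theorem~7.20]{lieberman_parabolicbook}) already used in the proof of Lemma~\ref{lem:ex:n}.
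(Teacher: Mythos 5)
Your proof is correct and follows exactly the paper's own argument: the paper's proof of this lemma is literally the one-line application of Lemma~\ref{lm:weak_testing} with $f \defs -n\nabla c$, $\psi \equiv 1$ and $p \defs 1$. Your additional remarks verifying the regularity and weak-solution hypotheses (via integration by parts against test functions and the parabolic $L^p$-theory already used for Lemma~\ref{lem:ex:n}) only fill in details the paper leaves implicit, so the approaches coincide.
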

\begin{proof}
  Choose $f \defs -n \nabla c$, $\psi \equiv 1$ and $p \defs 1$ in Lemma~\ref{lm:weak_testing}.
\end{proof}
%

The following two lemmata guarantee the smoothness of $n$ up to the spatial boundary.
\begin{lemma}\label{lem:hoelderbound:n}
Assume \eqref{cond:Om} and let $T>0$.
\begin{enumerate}
  \item[(i)]
    Given $Λ>0$ and $\beta \in (0, 1)$ there are $C>0$ and $\beta' \in (0, 1)$ such that whenever 
    \[
      \norm[C^{β}(\Ombar)]{n_0}\le Λ, \quad \norm[L^{∞}(\Ombar\times(0,T))]{n∇c}\le Λ
    \]
    and $n\in C^0([0,T];\Lom2)\cap L^2((0,T);\Wom12)\cap L^\infty(\Om\times(0,T))$ weakly solves \eqref{neq}, then 
    \[
    \normm{C^{β',\f{β'}2}(\Ombar\times[0,T])}{n}\le C.
    \]

  \item[(ii)]
    Moreover, given $Λ>0$ and $ε>0$ there are $C>0$ and $\beta' \in (0, 1)$ such that whenever 
    \[
    \norm[L^{∞}(\Ombar\times(0,T))]{n∇c}\le Λ
    \]
    and $n$ solves \eqref{neq} weakly, then 
    \[
    \normm{C^{β',\f{β'}2}(\Ombar\times[ε,T])}{n}\le C.
    \]
\end{enumerate}
\end{lemma}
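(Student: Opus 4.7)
The strategy is to recast \eqref{neq} as a divergence-form parabolic equation with bounded inhomogeneity and zero conormal boundary condition, and then to apply classical parabolic Hölder regularity. Setting $F \defs -n \nabla c$, the problem reads $n_t = \nabla \cdot (\nabla n + F)$ in $\Omega \times (0, T)$ with $(\nabla n + F) \cdot \nu = 0$ on $\partial \Omega \times (0, T)$, and $\|F\|_{L^\infty(\Omega \times (0, T))} \le \Lambda$ by hypothesis.

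The first step is to extract, in a quantitative way, an $L^\infty$ bound on $n$. Since Lemma~\ref{lem:ex:n} only guarantees $n_t \in L^1(\Omega \times (0, T))$, I cannot naively differentiate $t \mapsto \io n^p$; instead, Lemma~\ref{lm:weak_testing} (with $\psi \equiv 1$ and the choice of $F$ above) legitimizes testing by $n^{p-1}$ for every $p \in \mathbb{N}$. The conormal form of the boundary condition eliminates all boundary contributions, and Young's inequality yields
\[
  \f{1}{p}\ddt \io n^p + \f{p-1}{2} \io n^{p-2} |\nabla n|^2 \le \f{(p-1)\Lambda^2}{2} \io n^{p-2}.
\]
A standard Moser iteration, combined with Gagliardo--Nirenberg interpolation, converts this family of inequalities into a quantitative bound on $\|n\|_{L^\infty(\Ombar \times [\tau, T])}$ in terms of $\Lambda$, $T$, $\Omega$, $\tau$ and the size of $n$ at $t = \tau$. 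In part~(i) we take $\tau = 0$, using $\|n_0\|_{L^\infty} \le \|n_0\|_{C^\beta(\Ombar)} \le \Lambda$ and mass conservation (Lemma~\ref{lm:mass_conservation}); in part~(ii), an intermediate time $t_0 \in (0, \varepsilon)$ is selected at which a finite $L^2$ norm can be retrieved from the weak-solution class to seed the iteration.

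Once $n \nabla c$ has been bounded quantitatively in $L^\infty$, the equation falls squarely within the scope of De~Giorgi--Nash--Moser Hölder regularity theory for divergence-form parabolic equations with bounded coefficients and homogeneous conormal boundary condition (see \cite[Ch.\ VI]{lieberman_parabolicbook}). This produces $\beta' \in (0, 1)$ and $C > 0$ such that the stated Hölder estimate holds. In part~(i), the Hölder continuity of $n_0$ provides compatible initial data, so the bound extends to the full parabolic cylinder $\Ombar \times [0, T]$; in part~(ii), the interior-in-time boundary Hölder estimate furnishes the bound on $\Ombar \times [\varepsilon, T]$ without any reference to $n_0$.

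The main obstacle is the execution of the first step. In part~(ii), obtaining a quantitative $L^\infty$ bound on $n$ from only the drift assumption $\|n \nabla c\|_{L^\infty} \le \Lambda$ is delicate, as no initial-data bound is available; the argument must extract the missing information from the weak-solution class at an intermediate time. The remaining step---invoking standard parabolic Hölder regularity once the drift is bounded in $L^\infty$---is then routine, the crucial algebraic observation throughout being that the boundary condition is precisely of conormal form and therefore does not generate any uncontrolled boundary terms.
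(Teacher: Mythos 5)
The paper's entire proof of this lemma is a citation: after exactly the rewriting you perform ($n_t=\nabla\cdot(\nabla n+F)$ with $F\defs-n\nabla c$ and conormal boundary condition $(\nabla n+F)\cdot\nu=0$), both parts are obtained from DiBenedetto's H\"older estimates up to the parabolic boundary for \emph{bounded} weak solutions of divergence-form parabolic equations with variational (conormal) boundary data, \cite[Theorem~4]{benedetto_Local}. So your second step --- invoking De Giorgi--Nash--Moser/conormal H\"older theory once the flux perturbation is bounded in $L^\infty$ --- is precisely the paper's argument, merely with Lieberman's book in place of DiBenedetto's paper as the reference.

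Where you diverge is the preliminary quantitative $L^\infty$ bound, and this is where your part~(ii) has a genuine gap. The paper never derives such a bound: the cited theorem presupposes a bounded weak solution (note that $n\in L^\infty(\Omega\times(0,T))$ is part of the hypotheses of~(i)) and its constants are allowed to depend on that bound, which in all later applications of the lemma is supplied separately (via Lemma~\ref{lem:boundedness-n-in-Lp}). For part~(i) your Moser iteration is sound, since $\|n_0\|_{L^\infty}\le\Lambda$ and $\|F\|_{L^\infty}\le\Lambda$ together with mass conservation do close the iteration, although it duplicates information the paper simply carries as a hypothesis. For part~(ii), however, the plan to ``seed the iteration'' with an $L^2$ norm ``retrieved from the weak-solution class at an intermediate time'' cannot yield a constant depending only on $\Lambda$, $\varepsilon$, $T$ and $\Omega$: the hypothesis $\|n\nabla c\|_{L^\infty}\le\Lambda$ gives no control whatsoever on $\|n(\cdot,t_0)\|_{L^2}$, nor even on $\io n_0$ --- take $\nabla c\equiv 0$ and $n\equiv M$ with $M$ arbitrarily large, which satisfies every assumption of~(ii) while $\|n\|_{C^{\beta',\beta'/2}(\Ombar\times[\varepsilon,T])}=M$. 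Hence the bound your argument produces for~(ii) unavoidably depends on the individual solution, not only on the stated data. (To be fair, this dependence on $\|n\|_{L^\infty}$ is implicit in the paper's own formulation and use of~(ii); the clean resolution is to carry it explicitly, as the constants in \cite[Theorem~4]{benedetto_Local} do, rather than to attempt to remove it by iteration.)
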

\begin{proof}
 See \cite[Theorem 4]{benedetto_Local}. 
\end{proof}

\begin{lemma}\label{lem:n-c1beta}
Assume \eqref{cond:Om} and let $\beta \in (0, 1)$.
    For every $Λ>0$, $T>0$ and $ε>0$, there are $C>0$ and $β'\in(0,1)$
    such that whenever 
        \begin{align*}
      n \in L^\infty(\Omega \times (0, T)) \cap C^0([0, T]; L^2(\Omega))\cap L^2((0,T);\Wom12)
    \end{align*}
    is a weak solution of \eqref{neq}
  for some $n_0\in C^0(\Ombar)$, $∇c \in C^{\beta, \frac{\beta}{2}}(\Ombar \times [0, T])$, 
    \begin{align*}
      \|n\|_{L^\infty(\Omega \times (\f{ε}2, T))} \le \Lambda 
      \quad \text{and} \quad
      \|\nabla c\|_{C^{\beta, \frac{\beta}{2}}(\Ombar \times [\f{ε}2, T])} \le \Lambda,
    \end{align*}
    then we have 
    \[
     \normm{C^{1+β',\f{1+β'}2}(\Ombar\times[ε,T])}{n}\le C.
    \]
\end{lemma}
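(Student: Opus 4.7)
The plan is to bootstrap the Hölder regularity of $n$ already furnished by Lemma~\ref{lem:hoelderbound:n}(ii) up to the desired $C^{1+\beta'}$-regularity via a parabolic Schauder estimate. The one technical nuisance is that we have no compatibility data at the time $\varepsilon/2$, so we will first multiply by a smooth temporal cutoff to obtain a related function solving a linear divergence-form parabolic problem with trivial initial data, and then invoke Schauder theory on this modified problem.

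First, apply Lemma~\ref{lem:hoelderbound:n}(ii) with $\varepsilon/2$ in place of $\varepsilon$; the assumptions $\|n\|_{L^\infty(\Omega\times(\varepsilon/2,T))}\le\Lambda$ and $\|\nabla c\|_{L^\infty(\Ombar\times[\varepsilon/2,T])}\le\Lambda$ yield constants $\beta''\in(0,1)$ and $C_1\gt 0$ with $\|n\|_{C^{\beta'',\beta''/2}(\Ombar\times[\varepsilon/2,T])}\le C_1$. Next, fix $\eta\in C^\infty([0,T])$ with $\eta\equiv 0$ on $[0,3\varepsilon/4]$ and $\eta\equiv 1$ on $[\varepsilon,T]$, and set $\tilde n\defs\eta n$. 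A direct computation based on \eqref{neq} (multiplying the equation by $\eta$ and commuting $\eta$ with the spatial divergence) shows that $\tilde n$ is a weak solution of
\[
\tilde n_t = \Delta\tilde n - \nabla\cdot(\tilde n\,\nabla c) + \eta' n \quad\text{in }\Omega\times(0,T),
\]
with the oblique boundary condition $\partial_\nu\tilde n = \tilde n\,\partial_\nu c$ on $\partial\Omega\times(0,T)$ and with zero initial datum $\tilde n(\cdot,0)=0$.

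Since $\nabla c\in C^{\beta,\beta/2}(\Ombar\times[0,T])$ with norm bounded by $\Lambda$, since the forcing $\eta' n$ (which is supported in $[3\varepsilon/4,\varepsilon]$) belongs to $C^{\beta'',\beta''/2}(\Ombar\times[0,T])$ with norm controlled by $C_1$ and $\|\eta'\|_{C^1([0,T])}$, and since $\tilde n(\cdot,0)=0$ trivially satisfies the compatibility condition $\partial_\nu\tilde n(\cdot,0)=\tilde n(\cdot,0)\,\partial_\nu c(\cdot,0)$, the parabolic Schauder estimate for divergence-form equations with oblique boundary conditions—exactly the tool invoked in the proof of Lemma~\ref{lem:ex:n} (Theorem~6.46 of Lieberman's parabolic book)—provides $\beta'\in(0,\min(\beta,\beta''))$ and $C\gt 0$, both depending only on $\Lambda$, $T$ and $\varepsilon$, such that $\|\tilde n\|_{C^{1+\beta',(1+\beta')/2}(\Ombar\times[0,T])}\le C$. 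Since $\tilde n\equiv n$ on $[\varepsilon,T]$, this yields the claimed estimate. The only real obstacle is the careful bookkeeping needed to ensure that all the relevant Hölder norms (of $\nabla c$ in $\Ombar\times[0,T]$, of $\partial_\nu c$ on $\partial\Omega\times[0,T]$, and of the forcing $\eta' n$) are controlled quantitatively in terms of $\Lambda$, $T$ and $\varepsilon$ alone, so that the Schauder constant depends only on these parameters.
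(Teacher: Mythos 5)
Your argument is essentially the paper's: the paper likewise multiplies $n$ by a smooth temporal cutoff vanishing on $[0,\tfrac{\varepsilon}{2})$ and equal to $1$ on $[\varepsilon,T]$ and applies Lieberman's gradient-H\"older/Schauder estimate for conormal problems (there \cite[Theorem~1.1]{lieberman_Holder}, here Theorem~6.46 of \cite{lieberman_parabolicbook} plus a preliminary use of Lemma~\ref{lem:hoelderbound:n}(ii)) to the resulting problem with zero initial data. One small correction: the hypotheses only bound $\|\nabla c\|_{C^{\beta,\beta/2}(\Ombar\times[\varepsilon/2,T])}$ by $\Lambda$, not the norm on all of $\Ombar\times[0,T]$ as you claim, so the Schauder estimate should be applied with initial time $\tfrac{\varepsilon}{2}$ (or $\tfrac{3\varepsilon}{4}$) rather than $0$ --- which is harmless, since $\eta n$ vanishes identically there.
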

\begin{proof}
  For this temporally localized statement one can apply \cite[Theorem~1.1]{lieberman_Holder} to $ζn$
  for a cutoff function $ζ\in C_c^{∞}([0,∞))$ with $ζ(t)=0$ for $t\in[0,\f{ε}2)$ and $ζ(t)=1$ for every $t\in[ε,T]$.
\end{proof}

\subsection{Existence and boundedness for sufficiently smooth initial data}

The core of the boundedness proof lies in the following lemma. It is based on a study of the temporal evolution of $\io n^p$, during which we will employ ellipticity of the equation for $c$. The boundary integrals originating from \eqref{realistic-bc} will be dealt with by means of trace embeddings (for the corresponding interpolation inequality refer to Appendix~\ref{sec:interpolation}). In the second half of the proof, this study transitions into a Moser-type iteration yielding boundedness in $L^{∞}$.
In preparation for the Leray--Schauder fixed point theorem (which requires a priori estimates for solutions of $n = \sigma \Phi(n)$, $Φ$ being the solution operator for \eqref{neq}, for every $\sigma\in[0,1]$), we introduce a parameter $σ$ into the system.
\begin{lemma}\label{lem:boundedness-n-in-Lp}
\label{lem:boundedness-n-in-Linfty}
Assume \eqref{cond:Om}, let $\gamma \gt 0$, $\con0 \ni g \gt 0$
and $Λ>0$.
Then there is $C>0$ such that whenever $T \in (0, \infty]$, $σ\in[0,1]$ and
\begin{align}\label{reg_nc_t}
  (n,c)\in X \times C^{2,0}(\Ombar\times(0,T)),
\end{align}
with $X$ as in \eqref{eq:ex:n:reg}, 
solves
\begin{align} \label{prob:sigma}
  \begin{cases}
    n_t = Δn - ∇\cdot(n∇c)            & \text{in }\Om\times(0,T),\\
    0 = Δc - σnc                      & \text{in }\Om\times(0,T),\\
    \partial_\nu n = n \partial_\nu c & \text{on }∂\Om\times(0,T),\\
    \partial_\nu c = (\gamma - c)g    & \text{on }∂\Om\times(0,T),\\
    n(\cdot,0) = n_0                   & \text{in }\Om
  \end{cases}
\end{align}
(weakly and) classically for some $n_0\in C^0(\Ombar)$ with $\norm[\Lom \infty]{n_0}\le Λ$, then 
\[
 \norm[\Lom \infty]{n(\cdot,t)} \le C \qquad \text{for all } t\in(0,T).
\]
\end{lemma}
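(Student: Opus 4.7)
The plan is to derive a time-uniform $L^p$-bound for $n$ for every integer $p$ and then boost it to $L^\infty$ via a Moser-type iteration. The ellipticity of the $c$-equation is the crucial ingredient that lets me turn the dangerous chemotactic drift term into a surface contribution that can be absorbed using the trace inequality of Appendix~\ref{sec:interpolation}; mass conservation from Lemma~\ref{lm:mass_conservation} keeps everything quantitatively under control. The parameter $\sigma$ will play a benign role since $\sigma n c \ge 0$ contributes with a favourable sign.

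\textbf{Testing step.} For integer $p\ge 2$, Lemma~\ref{lm:weak_testing} (with $f = -n\nabla c$ and $\psi \equiv 1$) justifies the identity
\[
 \frac{1}{p}\ddt \io n^p + \frac{4(p-1)}{p^2}\io \bigl|\nabla n^{p/2}\bigr|^2 = \frac{p-1}{p}\io \nabla n^p \cdot \nabla c,
\]
where the boundary contributions cancel thanks to $\partial_\nu n = n \partial_\nu c$. Integrating the right-hand side by parts once more and inserting $\Delta c = \sigma n c$, $\partial_\nu c = (\gamma - c)g$, together with $0 \le c \le \gamma$ from Lemma~\ref{lem:existence-c(t)}, I arrive at
\[
 \ddt \io n^p + \frac{4(p-1)}{p}\io \bigl|\nabla n^{p/2}\bigr|^2 \le (p-1)\gamma\,\|g\|_{L^\infty(\partial\Omega)} \int_{\partial\Omega} n^p,
\]
the volume integral $-\sigma(p-1)\io n^{p+1}c$ being nonpositive.

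\textbf{Absorbing the trace and closing the ODI.} The trace interpolation inequality of Appendix~\ref{sec:interpolation}, applied to $v \defs n^{p/2}$, combined with a companion Gagliardo--Nirenberg inequality for the interior $L^2$-norm of $v$, yields, for every $\eta > 0$,
\[
 \int_{\partial\Omega} n^p \le \eta \io \bigl|\nabla n^{p/2}\bigr|^2 + C_\eta\left(\io n\right)^{\!\alpha_p}
\]
with explicit $p$-dependence in the constants. Since the mass $\io n = \io n_0 \le |\Omega|\Lambda$ is conserved (Lemma~\ref{lm:mass_conservation}), choosing $\eta$ small and invoking Gagliardo--Nirenberg once more to compare $\io n^p$ with $\io |\nabla n^{p/2}|^2$ (up to a mass factor) turns the inequality above into an autonomous ODI $\ddt y + \kappa_p y \le \mu_p$, which delivers $\sup_{t\in(0,T)} \io n^p \le M_p$ with $M_p$ independent of $T$ and $\sigma$.

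\textbf{Moser iteration.} Finally, choosing $p_k = 2^k$ and carefully tracking the $p$-dependence of the constants produced in the previous step yields a recursion $M_{p_{k+1}}^{1/p_{k+1}} \le (C p_{k+1})^{\alpha/p_{k+1}} (M_{p_k}^{1/p_k})^{\gamma}$ of standard Moser shape, so that $M_{p_k}^{1/p_k}$ remains bounded as $k \to \infty$ and the claimed $L^\infty$-bound follows. The principal obstacle is precisely this last step: because the Robin boundary term couples the interior dissipation to a surface integral, the iteration only closes if the appendix's trace interpolation has sufficiently mild polynomial $p$-dependence in its constants and the conserved-mass bound can be propagated undiminished through every iteration level.
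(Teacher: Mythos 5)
Your testing step reproduces the paper's computation exactly (rewrite the chemotactic term via $\Delta c=\sigma n c\ge 0$ and the Robin condition, so that only the boundary integral $\,(p-1)\gamma\|g\|_{L^\infty(\partial\Omega)}\int_{\partial\Omega}(n^{p/2})^2$ survives), and the overall Moser strategy is also the paper's. The genuine gap is in your absorption step and in how it is supposed to feed the iteration. You interpolate the boundary term all the way down to the conserved mass, i.e.\ you claim $\int_{\partial\Omega}(n^{p/2})^2\le \eta\io|\nabla n^{p/2}|^2+C_\eta\kl{\io n}^{\alpha_p}$ with constants of ``explicit'' (implicitly: manageable) $p$-dependence. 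For $\varphi=n^{p/2}$ this means invoking Lemma~\ref{lm:trace_thm} with $q=2/p$, and the scaling constraint \eqref{eq:trace_thm:cond_lambda} then forces the interpolation weight on the low-order norm to satisfy $\lambda<\tfrac1{Np+2-N}$, so $\lambda\to0$ as $p\to\infty$. Since the boundary term carries a factor $p-1$ while the dissipation is only $\tfrac{4(p-1)}{p}\io|\nabla n^{p/2}|^2$, you must take $\eta\sim 1/p$, and the Young/absorption constant then grows like $\eta^{-(1-\lambda)/\lambda}\gtrsim p^{cNp}$, i.e.\ super-exponentially, not polynomially; the companion Gagliardo--Nirenberg step comparing $\io n^p$ with $\io|\nabla n^{p/2}|^2$ and the mass degenerates in the same way (exponent tending to $1$). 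Moreover, the constant $C'$ in Lemma~\ref{lm:trace_thm} is not tracked in its dependence on $q$ and $\lambda$, so applying it with $q=q(p)\to0$, $\lambda=\lambda(p)\to0$ yields no usable $p$-dependence at all. Consequently your mass-only ODI can at best give $M_p^{1/p}\gtrsim p^{cN}\to\infty$, which does not produce an $L^\infty$ bound; and, independently of this, the recursion you then write down, relating $M_{p_{k+1}}$ to $M_{p_k}$, simply does not follow from your Step 2, whose right-hand side contains only the mass --- the two steps do not connect.

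The repair is precisely the structure of the paper's proof: fix $\lambda\in(0,\tfrac1{N+2})$ once and for all and apply Lemma~\ref{lm:trace_thm}(ii) with $q=1$ to $\varphi=n^{p/2}$, so that the constant is only polynomial in $p$ (of order $p^\mu$ with $\mu=\tfrac{1-\lambda}{\lambda}$) while the low-order term is $\kl{\io n^{p/2}}^2=\|n\|_{L^{p/2}(\Omega)}^{p}$ rather than a power of the mass; a Poincar\'e inequality (instead of Gagliardo--Nirenberg) then generates the absorbing term $-\tfrac{C_4}{p}\io n^p$. This yields the genuinely recursive estimate $A_j\le\max\{2^{1/p_j}\|n_0\|_{L^{p_j}(\Omega)},(C_7p_j^{\mu+1})^{1/p_j}A_{j-1}\}$ for $A_j=\sup_t\|n(\cdot,t)\|_{L^{2^j}(\Omega)}$, with mass conservation (Lemma~\ref{lm:mass_conservation}) entering only at the base level $A_0$, and the product of the factors $(C_7p_j^{\mu+1})^{1/p_j}$ converges. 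A further, more minor point: Lemma~\ref{lm:weak_testing} provides an integral identity, not the pointwise identity $\tfrac1p\ddt\io n^p=\dots$ you assert (this is exactly the regularity issue the lemma is designed to circumvent); either deduce absolute continuity of $t\mapsto\io n^p$ from the integral identity before arguing by an ODI, or, as the paper does, choose $\psi(t)=\ure^{C_4t}$ in Lemma~\ref{lm:weak_testing} and carry out the comparison argument entirely in integrated form, which also removes any worry about $T=\infty$ or the dependence on $\sigma$.
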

\begin{proof}
  We begin by fixing some constants:
  Let $C_1 \defs \gamma \|g\|_{L^\infty(\partial\Omega)}$
  and choose $C_2 \gt 0$ sufficiently large such that $\frac{C_1}{C_2 p} \le \frac{2(p-1)}{p^2}$ for all $p \in [2, \infty)$.
  Fixing an arbitrary $\lambda \in (0, \frac1{N+2})$ and setting $\mu \defs \frac{1-\lambda}{\lambda} \gt 0$,
  we employ Lemma~\ref{lm:trace_thm}(ii) (with $q \defs 1$)
  to obtain $C_3 \gt 0$ such that
  \begin{align}\label{eq:boundedness-n-in-Lp:bdr_est}
    \int_{∂Ω} \varphi^2
    \le \frac1{C_2 p} \io |∇ \varphi|^2 + C_3 p^\mu \left( \io \varphi \right)^2
    \qquad \text{for all $\varphi \in \sob12$ and all $p \in [2, \infty)$}.
  \end{align}
  Moreover, Poincar\'e's inequality provides us with $C_4, C_5 \gt 0$ such that
  \begin{align}\label{eq:boundedness-n-in-Lp:poincare}
          \frac{C_1}{C_2} \io |∇\varphi|^2
    &\ge  C_4 \io \varphi^2 - C_5 \left(\io \varphi \right)^2
    \qquad \text{for all $\varphi \in \sob12$}.
  \end{align}
  Finally, we set $C_6 \defs C_1 C_3 + C_5$ and $C_7 \defs \frac{2C_6}{C_4}$ and fix $T' \in (0, T)$.
  For $p \in \N$,
  Lemma~\ref{lm:weak_testing} (with $f \defs -n \nabla c$ and $\psi(t) \defs \ure^{C_4 t}$ for $t \in [0, T)$)
  assert that
  \begin{align}\label{eq:boundedness-n-in-Lp:first_testing}
          \frac{ \ure^{C_4 T'}}{p} \io n^p(\cdot, T')
    &=    \frac1p \io n_0^p
          + \f{C_4} p \int_0^{T'} \left( \ure^{C_4 t} \io n^p \right)
          - \int_0^{T'} \left( \ure^{C_4 t} \io (\nabla n - n \nabla c) \cdot \nabla n^{p-1} \right)
  \end{align}
  holds.
  
  By replacing $Δc$ by $σnc$ and removing negative terms
  we obtain that
  \begin{align*}
    \pe  -\io (\nabla n - n \nabla c) \cdot \nabla n^{p-1} 
    &=    -(p-1) \io n^{p-2} |∇n|^2 + (p-1) \io n^{p-1} ∇n\cdot ∇c \nn\\
    &=    -\f{4(p-1)}{p^2} \io |∇n^{\f p2}|^2 + \f{p-1}p \io ∇n^p\cdot∇c\nn\\
    &=    -\f{4(p-1)}{p^2} \io |∇n^{\f p2}|^2 -\f{p-1}p \io n^p Δc + \f{p-1}p\int_{∂Ω} n^p (γ-c)g \nn\\
    &=    -\f{4(p-1)}{p^2} \io |∇n^{\f p2}|^2 -\f{p-1}p σ\io n^{p+1}c + \f{p-1}p\int_{∂Ω} n^p (γ-c)g\nn \\
    &\le  - \f{2C_1}{C_2p} \io |∇n^{\f p2}|^2  + C_1 \int_{∂Ω} \left(n^\frac p2\right)^2 \nn\qquad \text{in $(0, T)$ for all $p \in [2, \infty)$}. 
    \end{align*}
 If we additionally make use of \eqref{eq:boundedness-n-in-Lp:bdr_est} and \eqref{eq:boundedness-n-in-Lp:poincare}, we see that 
\begin{align*}
\pe  -\io (\nabla n - n \nabla c) \cdot \nabla n^{p-1}
    &\le  - \f{C_4}p  \io n^p + \left(C_1 C_3 p^\mu + \frac{C_5}{p}\right) \left( \io n^\frac p2 \right)^2
    \qquad \text{in $(0, T)$ for all $p \in [2, \infty)$}.
  \end{align*}
  
  Since $(C_1C_3 p^\mu + \f{C_5}p) \le C_6 p^\mu$ for $p \gt 1$, plugging this into \eqref{eq:boundedness-n-in-Lp:first_testing} yields 
  \begin{align*}
          \frac{ \ure^{C_4 T'}}{p} \io n^p(\cdot, T')
    & \le    \frac1p \io n_0^p
          + C_6 p^\mu \int_0^{T'} \left( \ure^{C_4 t} \left(\io n^{\f p2}\right)^2 \right) \\
    &\le  \frac1p \io n_0^p
          + \frac{C_6}{C_4} p^{\mu} \ure^{C_4 T'} \sup_{t \in (0, T')} \left(\io n^{\f p2}(\cdot, t)\right)^2 
    \qquad \text{for all $p \in \N \setminus \{1\}$}
  \end{align*}
  and hence
  \begin{align*}
          \io n^p(\cdot, T')
    &\le  \max\left\{ 2\io n_0^p, C_7 p^{\mu + 1} \sup_{t \in (0, T')} \left(\io n^{\f p2}(\cdot, t)\right)^2 \right\}
    \qquad \text{for all $p \in \N \setminus \{1\}$}.
  \end{align*}

  As $T' \in (0, T)$ was chosen arbitrary,
  setting $p_j \defs 2^j \in \N$ and $A_j \defs \sup_{t \in (0, T)} \|n(\cdot, t)\|_{\leb {p_j}}$ for $j \in \N_0$,
  we obtain
  \begin{align*}
          A_j
    &\le  \max\left\{ 2^\frac1{p_j} \|n_0\|_{\leb{p_j}}, (C_7 p_j^{\mu + 1})^\frac1{p_j} A_{j-1} \right\}
    \qquad \text{for all $j \ge 1$}.
  \end{align*}
  We introduce the sequence $(B_j)_{j\inℕ}$ defined by 
  \[
   B_0 = Λ\max\left\{1,|\Omega|\right\}, \quad B_j = \max\left\{2 \Lambda\max\left\{|\Om|,1\right\},(C_7p_j^{\mu+1})^{\f1{p_j}} B_{j-1}\right\}\qquad \text{for all } j\ge1.
  \]
  Then by Lemma~\ref{lm:mass_conservation}, $A_0\le B_0$; and because of
    \begin{align*}
        2^\frac1{p_j} \|n_0\|_{\leb{p_j}}
    \le 2 \|n_0\|_{\leb\infty} \max\{|\Omega|, 1\}
    \le 2 \Lambda \max\{|\Omega|, 1\}
    \qquad \text{for all $j \in \N$}
  \end{align*} and due to monotonicity of the right-hand side with respect to $B_{j-1}$, also $A_j\le B_j$ for every $j\in ℕ_0$. Therefore $\|n\|_{L^\infty(\Omega \times (0, T))} = \lim_{j \ra \infty} A_j \le \liminf_{j\to\infty} B_j$. We hence only have to show $\liminf_{j\to\infty} B_j<\infty$ to prove the lemma. 
  
  If there is a sequence $(j_k)_{k \in \N} \subset \N_0$ with $j_k \ra \infty$ for $k \ra \infty$
  and $B_{j_k} \le 2 \Lambda \max\{|\Omega|, 1\}$ for all $k\in ℕ$,
  then also $\liminf_{j \ra \infty} B_{j}\le \liminf_{k\ra\infty} B_{j_k} \le 2 \Lambda \max\{|\Omega|, 1\}$.

  Thus, we may assume that there is $j_0 \in \N$ with $B_j \gt 2 \Lambda \max\{|\Omega|, 1\}$ for all $j \ge j_0$.
   This implies
  \begin{align*}
          B_j
    &\le  (C_7 p_j^{\mu + 1})^\frac1{p_j} B_{j-1}
    \qquad \text{for all $j \ge j_0$}
  \end{align*}
  and hence by induction also
  \begin{align*}
        B_j
    \le \left(\prod_{k = j_0}^{j} (C_7 p_k^{\mu + 1})^\frac1{p_k} \right) B_{j_0}
    =   C_7^{\sum_{k = j_0}^j \frac1{2^k}}
        \cdot 2^{(\mu + 1) \sum_{k = j_0}^j \frac{k}{2^k}}\cdot B_{j_0}
    \qquad \text{for all $j \ge j_0$}
  \end{align*}
  since $p_j = 2^j$ for $j \in \N$.
  This yields the statement
  because of $\sum_{k=1}^\infty \frac{k}{2^k} \lt \infty$ and $B_{j_0} \lt \infty$.
\end{proof}

Thanks to this a priori $L^\infty$  bound of $n$, 
we can assert existence of solutions to \eqref{our_system} for sufficiently smooth initial data. 
\begin{lemma}\label{lem:locex_c1betadata}
  Assume \eqref{cond:Om}, $\gamma \gt 0$ , $\beta \in (0, 1)$ and $C^{1 + \beta}(\partial \Omega) \ni g \gt 0$.
  Let $T \in(0,∞)$ and let $n_0\in C^{1+β}(\Ombar)$ satisfy the compatibility condition $\partial_\nu n_0=n_0\partial_\nu c_0$ on $\partial\Om$, where $c_0$ is the solution of \eqref{ceq} for $n=n_0$.
  There is a solution $(n, c)$ of regularity~\eqref{reg_nc_t}
  to~\eqref{our_system}.
\end{lemma}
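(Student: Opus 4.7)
The plan is to apply Schaefer's fixed-point theorem to an operator whose fixed points are classical solutions of~\eqref{our_system}. For $\alpha \in (0,\beta)$, I would take as underlying Banach space the closed subspace
\[
V \defs \{u \in C^{\alpha, \alpha/2}(\Ombar \times [0,T]) : u(\cdot, 0) = 0\}
\]
of $C^{\alpha, \alpha/2}(\Ombar \times [0,T])$; building the affine constraint $\tilde n(\cdot,0)=n_0$ into this space is what will allow invocation of the compatibility-dependent Lemma~\ref{lem:ex:n} at each iteration step. For $u \in V$, one sets $\tilde n \defs (n_0+u)_+ \in C^{\alpha,\alpha/2}$, uses Lemma~\ref{lem:ex:c-u} to obtain the elliptic partner $c=c[\tilde n]$, and notes that $c(\cdot,0)$ coincides with the $c_0$ from the lemma statement since $\tilde n(\cdot,0) = n_0$. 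The compatibility condition $\partial_\nu n_0 = n_0 \partial_\nu c(\cdot,0) = n_0 \partial_\nu c_0$ required by Lemma~\ref{lem:ex:n} is then precisely the standing assumption, so that lemma furnishes a classical solution $n[u] \in C^{1+\beta',(1+\beta')/2}$ of~\eqref{neq}; set $\Phi(u) \defs n[u] - n_0 \in V$.

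Continuity of $\Phi$ will follow from standard continuous dependence of the elliptic and parabolic solutions on their data (uniqueness together with Lemma~\ref{lem:regularity:c(t)}), while compactness is immediate once Lemma~\ref{lem:bounds-c(t)} and~\eqref{intermediateschauderestimate} are used to produce a uniform $C^{1+\beta',(1+\beta')/2}$-bound on $n[u]$ for $u$ in any bounded subset of $V$, since the embedding $C^{1+\beta',(1+\beta')/2} \embed C^{\alpha,\alpha/2}$ is compact.

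The hard part will be the uniform a priori bound on $u$ solving $u = \lambda \Phi(u)$ for some $\lambda \in [0,1]$. Such an identity rearranges to $\tilde n \defs n_0 + u = (1-\lambda)n_0 + \lambda v$ with $v \defs n[u]$, so $\tilde n \ge 0$ (both $n_0$ and $v$ are), whence $\tilde n_+ = \tilde n$ and $c = c[\tilde n]$ solves $0 = \Delta c - \tilde n c$ classically. Running the Moser iteration of Lemma~\ref{lem:boundedness-n-in-Lp} for $v$ then goes through unchanged because the critical term
\[
-\f{p-1}p \io v^p \Delta c = -\f{p-1}p \io v^p \tilde n c \le 0
\]
inherits its good sign from $\tilde n, c \ge 0$, exactly as the term $-\f{p-1}p \sigma \io n^{p+1} c$ does there. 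The resulting $\lambda$-independent $L^\infty$-bound on $v$ will propagate via Lemma~\ref{lem:W2p-estimate-c} to $\nabla c$, via Lemma~\ref{lem:hoelderbound:n} to H\"older regularity of $v$, via Lemma~\ref{lem:bounds-c(t)} to $C^{2+\beta''}$-regularity of $c$, and finally via~\eqref{intermediateschauderestimate} to a uniform $C^{1+\beta''',(1+\beta''')/2}$-bound on $v$, hence to a uniform $C^{\alpha,\alpha/2}$-bound on $u$. Schaefer's theorem then delivers a fixed point $u^* \in V$, whereupon $n \defs n_0 + u^*$ together with $c \defs c[n]$ forms the desired solution of~\eqref{our_system} of regularity~\eqref{reg_nc_t}. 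The principal obstacle is precisely this a priori estimate: Lemma~\ref{lem:boundedness-n-in-Lp} was designed for the genuinely coupled pair $(n,c)$, whereas the iteration temporarily decouples the input $\tilde n$ of the elliptic step from the output $v$ of the parabolic step; what saves the argument is that both remain nonnegative, so the sign-critical integrand does not change sign.
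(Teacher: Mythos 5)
Your proposal is correct and rests on the same pillars as the paper's proof: a Leray--Schauder/Schaefer fixed-point argument in a parabolic Hölder space, with the decisive $\lambda$-uniform $L^\infty$ bound coming from the Moser-type iteration of Lemma~\ref{lem:boundedness-n-in-Lp} and the remaining bounds from the bootstrap through Lemmata~\ref{lem:W2p-estimate-c}, \ref{lem:hoelderbound:n}, \ref{lem:ex:c-u} and \eqref{intermediateschauderestimate}. The implementation differs in two instructive ways. The paper iterates on $n$ itself in $C^{\beta',\beta'/2}(\Ombar\times[0,T])$ and arranges the homotopy so that for $n=\sigma\Phi(n)$ the pair $(\Phi(n),c)$ solves exactly the $\sigma$-system \eqref{prob:sigma}; this is why Lemma~\ref{lem:boundedness-n-in-Lp} was formulated with the parameter $\sigma$ and can be cited verbatim. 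Your shifted unknown produces instead the elliptic datum $\tilde n=(1-\lambda)n_0+\lambda v$, so you must re-run (not cite) the proof of that lemma; as you correctly observe, it goes through unchanged because $-\io v^p\,\tilde n\,c\le 0$, $0\le c\le\gamma$ by Lemma~\ref{lem:existence-c(t)}, and mass conservation still holds for $v$ solving \eqref{neq}. On the other hand, your subspace $\{u(\cdot,0)=0\}$ makes the compatibility hypothesis of Lemma~\ref{lem:ex:n} automatic at every iteration step, a point the paper's construction leaves implicit, and the positive part handles inputs that are negative somewhere, which the paper does not address.

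One caveat on a step you treat as routine: the map $u\mapsto(n_0+u)_+$ is bounded but in general \emph{not} continuous on Hölder spaces (for $u(x)=Mx_+^\alpha$ and $u_\delta=u-\delta$ one has $\|u_\delta-u\|_{C^\alpha}=\delta\to0$ while $[(u_\delta)_+-u_+]_{C^\alpha}\ge M$), so continuity of $\Phi$ cannot be obtained by feeding $(n_0+u_j)_+\to(n_0+u)_+$ directly into Lemma~\ref{lem:regularity:c(t)} at the exponent $\alpha$. This is easily repaired: uniform convergence together with the uniform $C^{\alpha,\alpha/2}$-bound gives convergence of the positive parts in $C^{\alpha',\alpha'/2}$ for every $\alpha'<\alpha$, Lemma~\ref{lem:regularity:c(t)} then applies at exponent $\alpha'$, and the convergence $\Phi(u_j)\to\Phi(u)$ in $C^{\alpha,\alpha/2}$ is recovered from the uniform $C^{1+\beta',(1+\beta')/2}$-bound together with uniqueness of the limiting solution of \eqref{neq} --- the same compactness-plus-uniqueness device the paper uses for continuity. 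Likewise, for inputs with $(n_0+u)_+(\cdot,t)\equiv0$ at some time, Lemma~\ref{lem:existence-c(t)} does not apply literally, but then $c(\cdot,t)\equiv\gamma$ is the unique solution of \eqref{ceq}, so the elliptic step remains well defined; along the homotopy this case never occurs since $\tilde n=(1-\lambda)n_0+\lambda v>0$ or has positive mass. With these minor repairs your argument is complete.
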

 \begin{proof}
We let $β'\in(0,1)$ be as given by Lemma \ref{lem:hoelderbound:n} (i) and abbreviate $X_1\defs C^{β',\f{β'}2}(\Ombar\times[0,T])$. Given $n\in X_1$, we let $Φ(n)\defs \tilde{n}$ be the solution (guaranteed to exist by Lemma \ref{lem:ex:n}) of 
  \[
    \begin{cases}
      \tilde{n}_t = Δ\tilde n - ∇\cdot (\tilde n ∇c) & \text{in $\Omega \times (0, T)$}, \\
      \partial_\nu \tilde n = \tilde n \partial_\nu c & \text{on $\partial\Om \times (0, T)$}, \\
      \tilde n(\cdot, 0) = n_0                       & \text{in $\Omega$},
    \end{cases}
  \]
  where $c\in C^{\f{β'}4}([0,T];C^{2+\f{β'}2}(\Ombar))$ is as given by Lemma~\ref{lem:ex:c-u} for $n$. 
 
  Let $B\subset X_1$ be a bounded set. Then aided by the estimates in  Lemma~\ref{lem:ex:c-u} and Lemma~\ref{lem:ex:n} we can conclude that with some $C_1=C_1(B)>0$ we have 
  \begin{equation}\label{x2bound}
   \norm[X_2]{Φ(n)}\le C_1 
  \end{equation}
for every $n\in B$, where $X_2=C^{1+\f{β'}2,\f{1 + \f{β'}2}2}(\Ombar\times[0,T])$. 

If we assume that a sequence $\{n_j\}_{j\inℕ}$ converges to $n$ in $X_1$, then $c_j\to c$ in $C^{\f{β'}4}([0,T];C^{2+\f{β'}2}(\Ombar))$ 
by Lemma~\ref{lem:regularity:c(t)}, and due to \eqref{x2bound} 
for any  $β''\in(0,\f{β'}2)$ there is a subsequence $\{n_{j_k}\}_{k\inℕ}$ such that $Φ(n_{j_k})\to \hat n$ in $X_3\defs C^{1+β'',\f{1+β''}2}(\Ombar\times[0,T])$, where $\hat n$ solves \eqref{neq} (in the weak sense). By Lemma~\ref{lem:ex:n}, weak solutions to \eqref{neq} are unique in $X_3$, and accordingly $\hat n=Φ(n)$. As the limit object of subsequences is hence uniquely determined, we have that even $Φ(n_j)\to Φ(n)$ in $X_3$ as $j\to \infty$. Since both $X_2$ and $X_3$ are (compactly) embedded into $X_1$, we conclude that $Φ\colon X_1\to X_1$ is continuous and, due to \eqref{x2bound}, compact. 

We now want to procure estimates for every $n\in X_1$ and $σ\in[0,1]$ which are such that $n=σΦ(n)$. 
With $C_3>0$ taken from Lemma \ref{lem:boundedness-n-in-Lp}, we have that for any such $n$ and $σ$ 
\[
 \norm[L^{∞}(\Om\times(0,T))]{n}=σ\norm[L^{∞}(\Om\times(0,T))]{Φ(n)}\le \norm[L^{∞}(\Om\times(0,T))]{Φ(n)} \le C_3.
\]
Lemma \ref{lem:W2p-estimate-c} provides us with $C_4>0$ such that the solution $c$ from Lemma \ref{lem:ex:c-u} (applied to data $σn$) satisfies 
\[
 \norm[L^{∞}(\Om\times(0,T))]{∇c} \le C_4(1+\norm[L^{∞}(\Om\times(0,T)]{σn}))\le C_4(1+C_3),
\]
regardless of the precise choice of $n\in X_1$ or $σ\in[0,1]$. 
Lemma \ref{lem:hoelderbound:n} (i) (with $Λ=\max\{\norm[C^{β'}(\Ombar)]{n_0},C_3C_4(1+C_3)\}$) yields $C_5>0$ with 
 \[
  \norm[X_1]{n} \le C_5
 \]
 for every $n\in X_1$ fulfilling $n=σΦ(n)$ for arbitrary $σ\in[0,1]$. The Leray--Schauder theorem (\cite[Theorem~11.3]{GT}) hence proves existence of a solution $n\in X_1$ with $n=Φ(n)$. At the same time, the above construction (in particular Lemmata \ref{lem:ex:c-u} and \ref{lem:ex:n}) guarantee the asserted regularity. 
\end{proof}

\subsection{Positivity, uniqueness and solvability for less regular initial data}
We next derive positivity of $n$, which will also be used in the proof of the succeeding Gronwall type lemma.  
\begin{lemma}\label{lem:positivity}
Assume \eqref{cond:Om}.  
 Let $T>0$, $γ>0$, $g>0$.
 Then for every $Λ>0$ there is $C>0$ such that whenever $c\in C^{1,0}(\Ombar\times(0,T])$ and $n_0\in C^0(\Ombar)$ are such that 
 \[
  \inf_{\Om} n_0 > \f1{Λ}, \quad
  \norm[L^{∞}(\Om\times(0,T))]{Δc} \le Λ, \quad
  0<c<γ \text{ in } \Ombar\times(0,T)
 \]
 and $n\in C^{2,1}(\Om\times(0,T))\cap C^0(\Ombar\times[0,T))\cap C^{1,0}(\Ombar\times(0,T))$ solves 
 \[
  \begin{cases}
   n_t = Δn-∇\cdot(n∇c)&\text{in } \Om\times(0,T),\\
   ∂_{ν}n = n(γ-c)g& \text{on } ∂\Om\times(0,T),\\
   n(\cdot,0)=n_0&\text{in } \Om,
  \end{cases}
 \]
 then 
 \[
  \inf_{(x,t)\in\Om\times(0,T)} n(x,t) \ge C.
 \]
 \end{lemma}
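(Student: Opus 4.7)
My plan is to obtain the quantitative lower bound by comparing $n$ from below with the spatially constant subsolution $\underline n(x,t) \defs \tfrac{1}{\Lambda}\ure^{-Kt}$ for some $K \gt \Lambda$ to be fixed. Writing the PDE in non-divergence form as $n_t - \Delta n + \nabla c \cdot \nabla n + (\Delta c) n = 0$, I would check that $\underline n_t - \Delta \underline n + \nabla c \cdot \nabla \underline n + (\Delta c)\underline n = (\Delta c - K) \underline n \le 0$ in $\Omega \times (0,T)$, that $\underline n(\cdot,0) = \tfrac{1}{\Lambda} \le \inf_\Omega n_0$, and that $\partial_\nu \underline n = 0$. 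Hence $w \defs n - \underline n$ will satisfy $w_t - \Delta w + \nabla c \cdot \nabla w + (\Delta c) w \ge 0$ in $\Omega \times (0,T)$, $w(\cdot,0) \ge 0$, and $\partial_\nu w = n(\gamma - c)g$ on $\partial\Omega \times (0,T)$. The goal is then to conclude $w \ge 0$, which yields the claim with $C \defs \tfrac{1}{\Lambda}\ure^{-KT}$.

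\textbf{Preparatory step: strict positivity of $n$.} Before running the comparison, I would first establish the qualitative fact $n \gt 0$ throughout $\overline\Omega \times [0,T)$ via a first-touching-zero argument. Assuming for contradiction that $n$ attains $0$ for the first time at some $(x_0, t_0)$ with $t_0 \gt 0$: if $x_0 \in \Omega$, the strong parabolic minimum principle applied to the equation for $n$ (whose coefficients are bounded thanks to $\|\Delta c\|_{L^\infty} \le \Lambda$) would force $n \equiv 0$ on $\overline\Omega \times [0,t_0]$, contradicting $n_0 \gt 0$; if $x_0 \in \partial\Omega$, the strong minimum principle yields $n \gt 0$ in the interior of the cylinder, so Hopf's boundary point lemma gives $\partial_\nu n(x_0, t_0) \lt 0$, in contradiction with the boundary condition $\partial_\nu n = n(\gamma - c)g = 0$ at $(x_0, t_0)$.

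\textbf{Comparison step.} With $n \gt 0$ in hand, I would rule out a negative minimum of $w$ at some $(x_0, t_0) \in \overline\Omega \times (0, T)$. For a potential \emph{interior} minimum, I pass to $\tilde w \defs \ure^{-Kt} w$, which satisfies $\tilde w_t - \Delta \tilde w + \nabla c \cdot \nabla \tilde w + (K + \Delta c)\tilde w \ge 0$; choosing $K \gt \Lambda$ ensures $K + \Delta c \gt 0$, so that the extremum conditions $\tilde w_t(x_0, t_0) \le 0$, $\nabla \tilde w(x_0, t_0) = 0$, $\Delta \tilde w(x_0, t_0) \ge 0$ force $(K + \Delta c)\tilde w(x_0, t_0) \ge 0$, contradicting $\tilde w(x_0, t_0) \lt 0$. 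For a potential \emph{boundary} minimum, the standard one-sided estimate $\partial_\nu w(x_0, t_0) \le 0$ clashes with the strict positivity $\partial_\nu w(x_0, t_0) = n(x_0, t_0)(\gamma - c(x_0, t_0)) g(x_0) \gt 0$, which is secured by the preparatory step together with the hypotheses $0 \lt c \lt \gamma$ and $g \gt 0$. Hence $w \ge 0$ throughout, and the desired lower bound follows.

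\textbf{Main obstacle.} The delicate point is the boundary: after subtraction, $w$ satisfies the Robin-type identity $\partial_\nu w - (\gamma - c)g\, w = \underline n(\gamma - c)g$ with Robin coefficient $-(\gamma - c)g \le 0$ of the ``wrong'' sign, so a Hopf-type argument applied to $w$ alone does not close the boundary case (both Hopf and the Robin identity would only yield mutually consistent negative values of $\partial_\nu w$). It is precisely the intermediate strict positivity of $n$ that promotes $\partial_\nu w = n(\gamma - c)g$ from non-negative to strictly positive at any candidate boundary minimum, thereby producing the needed contradiction.
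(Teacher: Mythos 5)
Your proposal is correct, and its engine is the same as the paper's: compare $n$ with a spatially constant, exponentially decaying function, rule out a touching/minimum point in the interior through the zero-order term $n\Delta c \le \Lambda n$, and on the boundary through positivity of the Robin flux. The two routes differ in the technical devices. Where you insert a separate preparatory step (strong parabolic minimum principle plus Hopf's lemma) to secure $n \gt 0$, so that $\partial_\nu w = n(\gamma - c)g$ is strictly positive at a candidate boundary minimum, the paper instead perturbs the comparison function: it sets $w = n - \frac1\Lambda \ure^{-\Lambda t} + \eps \ure^{t}$ with $\eps$ so small that $\frac1\Lambda \ure^{-\Lambda t} - \eps \ure^{t} \gt 0$ on $[0,T]$; at a first touching point $n$ coincides with this strictly positive quantity, so the boundary flux is automatically positive and no strong maximum principle or Hopf lemma is needed, while the same $\eps$-term also supplies the strictness that you obtain from the margin $K \gt \Lambda$. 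Letting $\eps \searrow 0$ then gives $n \ge \frac1\Lambda \ure^{-\Lambda t}$. Your version does close, but two details deserve a word: only $\Delta c$ is assumed bounded, and $c \in C^{1,0}(\Ombar\times(0,T])$ gives no uniform control of $\nabla c$ as $t \searrow 0$, so the strong minimum principle and Hopf's lemma should be applied on cylinders $\Ombar\times[\tau, t_0]$ with $\tau \gt 0$ (which suffices, since $n(\cdot,\tau) \gt 0$ before the first zero); moreover, the extremum argument should be run on compact cylinders $\Ombar \times [0, T']$, $T' \lt T$, so that the negative minimum is actually attained. Both points are routine; the net effect of the paper's $\eps$-trick is simply a more elementary, self-contained proof that avoids invoking the strong minimum principle and Hopf's lemma altogether.
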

\begin{proof}
 We let $ε>0$ be so small that $\f1{Λ} \ure^{-Λt} -ε\ure^t>0$ for all $t\in[0,T]$ and introduce 
 \[
  w(x,t)\defs n(x,t)-\f1{Λ}\ure^{-Λt}+ε\ure^t, \qquad (x,t)\in\Ombar\times[0,T],
 \]
claiming that $w\ge 0$ throughout $\Ombar\times[0,T]$. 

If this were not the case, we could find $(x,t)\in \Ombar\times[0,T]$ such that $w(x,t)=0$ and $w(y,s)>0$ for every $y\in\Ombar$, $s\in[0,t)$. From positivity of $w(\cdot,0)$ and continuity of $w$, we could conclude that $t>0$. Moreover, we could observe that $n(x,t)=\f1{Λ}\ure^{-Λt}-ε\ure^t$. Since $x\in \Ombar$ would be a global minimum of $w(\cdot,t)$, either $Δw(x,t)\ge 0$, $∇w(x,t)=0$ (if $x\in \Om$) or $∂_{ν}w(x,t)\le 0$ (if $x\in ∂\Om$).
The latter case would entail that 
\[
 0\ge ∂_{ν}w(x,t)=∂_{ν}n(x,t)=n(x,t)(γ-c(x,t))g(x) = \left(\f1{Λ}\ure^{-Λt}-ε\ure^t\right)(γ-c(x,t))g(x),
\]
which is positive according to the assumptions of $γ,g$ and $c$ and the choice of $ε$, whereas the former case would lead to another contradiction arising from the fact that at $(x,t)$
\begin{align*}
 0&\ge \left(n-\f1{Λ}\ure^{-Λt}+ε\ure^t\right)_t = Δn - ∇n\cdot ∇c -nΔc +\ure^{-Λt}+ε\ure^t \\
 &\ge - nΛ +\ure^{-Λt}+ε\ure^t  = -\ure^{-Λt}+ε\Lambda\ure^t+\ure^{-Λt}+ε\ure^t =ε(\Lambda+1)\ure^t
  \gt 0.
\end{align*}
Finally taking $ε\searrow 0$, we conclude that $n(x,t)\ge \f1{Λ}\ure^{-Λt}$ for every $(x,t)\in\Ombar\times[0,T]$.
\end{proof}
The following lemma plays an important role in the proof of uniqueness of solutions to \eqref{our_system}. 
\begin{lemma} \label{lem:gronwall}
 Assume \eqref{cond:Om}, let $T\in(0,∞)$, $\gamma \gt 0$, $\beta \in (0, 1)$, $C^{1+\beta}(\partial \Omega) \ni g \ge 0$ and $\con 0 \ni n_{0,1} \ge 0$, $\con 0 \ni n_{0,2} \ge 0$.
 For every $Λ>0$ there is $C>0$ such that whenever 
 $(n_1,c_1),(n_2,c_2)\in X \times C^{2,0}(\Ombar \times (0, T))$ with $X$ as in \eqref{eq:ex:n:reg} solve \eqref{our_system} with $n_0=n_{0,1}$ and $n_0=n_{0,2}$, respectively, and fulfill
 \[
  0\le n_1\le Λ
  \quad \text{as well as} \quad
  \frac1\Lambda \le n_2\le Λ
  \qquad \text{in } \Om\times(0,T),
 \]
 then
 \[
  \norm[\Lom2]{n_1(\cdot,t)-n_2(\cdot,t)} \le \ure^{Ct} \norm[\Lom2]{n_{0,1}-n_{0,2}} \qquad \text{for all } t\in[0,T).
 \]
\end{lemma}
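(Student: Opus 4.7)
The plan is to set up the difference system for $N \defs n_1 - n_2$ and $C \defs c_1 - c_2$, exploit the weak testing identity of Lemma~\ref{lm:weak_testing} with $p = 2$ to derive an energy inequality for $\io N^2$, control $\io |\nabla C|^2$ by $\io N^2$ via an elliptic energy estimate, and conclude by Gronwall's inequality. Rewriting the first equation of~\eqref{our_system} in the flux form $n_{i,t} = \nabla \cdot (\nabla n_i - n_i \nabla c_i)$ and using the identity $n_1 \nabla c_1 - n_2 \nabla c_2 = N \nabla c_1 + n_2 \nabla C$, the function $N$ satisfies, in the weak sense,
\[
  N_t = \nabla \cdot (\nabla N + f) \text{ in } \Omega \times (0,T), \qquad \partial_\nu N = -f \cdot \nu \text{ on } \partial \Omega \times (0,T),
\]
with $f \defs -N \nabla c_1 - n_2 \nabla C$, while subtraction of the two elliptic equations yields
\[
  \Delta C - n_2 C = N c_1 \text{ in } \Omega, \qquad \partial_\nu C = -Cg \text{ on } \partial \Omega.
\]

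Since $N \in X$, and since elliptic regularity for the $c_i$ (e.g.\ Lemma~\ref{lem:W2p-estimate-c} combined with Sobolev embedding, using the $L^\infty$ bound $\|n_i\|_{L^\infty} \le \Lambda$) together with Lemma~\ref{lem:regularity:c(t)} yield uniform bounds on $\|\nabla c_i\|_{L^\infty(\Omega\times(0,T))}$ and continuity of $f$ on $\Ombar \times [0,T]$, Lemma~\ref{lm:weak_testing} applied with $p = 2$, $\psi \equiv 1$ and the above $f$ yields
\[
  \f12 \io N^2(\cdot,t) + \int_0^t \io |\nabla N|^2 = \f12 \io (n_{0,1}-n_{0,2})^2 + \int_0^t \io (N \nabla c_1 + n_2 \nabla C) \cdot \nabla N.
\]
Two applications of Young's inequality absorb $\int_0^t \io |\nabla N|^2$ on the right and produce
\[
  \io N^2(\cdot,t) \le \io (n_{0,1}-n_{0,2})^2 + C_1 \int_0^t \io N^2 + C_2 \int_0^t \io |\nabla C|^2
\]
with constants $C_1, C_2 \gt 0$ depending only on $\Lambda$.

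For the remaining gradient term in $C$, testing the elliptic equation for $C$ with $C$ itself and integrating by parts yields
\[
  \io |\nabla C|^2 + \io n_2 C^2 + \int_{\partial \Omega} g C^2 = -\io N c_1 C.
\]
Since $g \ge 0$, the lower bound $n_2 \ge 1/\Lambda$ together with the upper bound $c_1 \le \gamma$ (from Lemma~\ref{lem:existence-c(t)}) and one further Young inequality produces $\io |\nabla C|^2 \le C_3 \io N^2$. Substituted back into the previous estimate, this gives $\io N^2(\cdot,t) \le \io (n_{0,1}-n_{0,2})^2 + C_4 \int_0^t \io N^2$, and Gronwall's inequality followed by taking square roots yields the claim with $C = C_4/2$.

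The main obstacle is the limited temporal regularity of the $n_i$, which prevents a direct testing of the difference equation by $N$ itself; this is exactly what the weak testing identity of Lemma~\ref{lm:weak_testing} is designed to circumvent. A second delicate point is the elliptic estimate for $C$, which relies decisively on the strict lower bound $n_2 \ge 1/\Lambda$ --- the very reason the preparatory Lemma~\ref{lem:positivity} has been established. Without it, and since $g$ is only assumed to be non-negative, $\io C^2$ would not be controllable through this energy identity and the cross term $\int_0^t \io n_2 \nabla C \cdot \nabla N$ could no longer be absorbed in this direct fashion.
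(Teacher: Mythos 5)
Your proof is correct and follows the paper's strategy almost verbatim: the same difference system, the same application of Lemma~\ref{lm:weak_testing} with $p=2$ and $f=-(n_1-n_2)\nabla c_1-n_2\nabla(c_1-c_2)$, the identical elliptic energy estimate for $c_1-c_2$ (where $g\ge0$, $c_1\le\gamma$ and, crucially, $n_2\ge\frac1\Lambda$ give $\io|\nabla(c_1-c_2)|^2\le\frac{\gamma^2\Lambda}{4}\io(n_1-n_2)^2$), and a Grönwall argument at the end. The one genuine deviation is the treatment of the convective term $\int_0^t\io(n_1-n_2)\nabla c_1\cdot\nabla(n_1-n_2)$: the paper integrates by parts once more, discards $-\frac12\io(n_1-n_2)^2\,n_1c_1\le0$, and controls the resulting boundary integral $\int_{\partial\Omega}(n_1-n_2)^2(\gamma-c_1)g$ by the trace interpolation Lemma~\ref{lm:trace_thm}(ii), whereas you avoid producing any boundary term by invoking a uniform bound on $\|\nabla c_1\|_{L^\infty(\Omega)}$ through Lemma~\ref{lem:W2p-estimate-c} with $p>N$ and Sobolev embedding, followed by Young's inequality. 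Since $\gamma$, $g$ and $\Omega$ are fixed in the statement and $\|n_1\|_{L^\infty(\Omega\times(0,T))}\le\Lambda$, the constant you obtain still depends only on $\Lambda$ and the fixed data, so this shortcut is legitimate; it trades the trace inequality for elliptic $W^{2,p}$ regularity, which makes the argument a bit simpler at the cost of importing one more regularity result. A cosmetic caveat: Lemma~\ref{lem:W2p-estimate-c} (like Lemma~\ref{lem:existence-c(t)}, which the paper's own proof also cites at this point) is formally stated under $g>0$, while the present lemma assumes only $g\ge0$; the bounds $0\le c_i\le\gamma$ and the $W^{2,p}$ estimate do not actually use strict positivity of $g$, so this looseness is harmless and is shared with the paper.
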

\begin{proof}
  By assumption
  \begin{align*}
    \begin{cases}
      (n_1 - n_2)_t = \Delta (n_1 - n_2) - \nabla \cdot( (n_1 - n_2) \nabla c_1) - \nabla \cdot( n_2 \nabla (c_1 - c_2))
        & \text{in $\Omega \times (0, T)$}, \\
      \partial_\nu (n_1 - n_2) = (n_1 - n_2) \partial_\nu c_1 + n_2 \partial_\nu (c_1 - c_2) & \text{on $\partial \Omega \times (0, T)$}, \\
      (n_1 - n_2)(\cdot, 0) = n_{0,1} - n_{0,2} & \text{in $\Omega$}
    \end{cases}
  \end{align*}
  holds in the weak sense, 
  so that 
  Lemma~\ref{lm:weak_testing}
  (applied to $n_1 - n_2$ with $f \defs - (n_1-n_2) \nabla c_1 - n_2 \nabla (c_1 - c_2)$, $\psi \equiv 1$ and $p \defs 2$)
  asserts
  \begin{align} \label{eq:uniqueness:ddt_n1_n2}
    &\pe  \frac12 \io (n_1(\cdot, t) - n_2(\cdot, t))^2
          - \frac12 \io (n_{0,1} - n_{0,2})^2
          + \int_0^t \io |\nabla (n_1 - n_2)|^2 \notag \\
    &=    \int_0^t \io [(n_1 - n_2) \nabla c_1 + n_2 \nabla (c_1 - c_2)] \cdot \nabla (n_1 - n_2) \notag \\
    &=    \frac12 \int_0^t \io \nabla (n_1 - n_2)^2 \cdot \nabla c_1
          + \int_0^t \io n_2 \nabla (n_1 - n_2) \cdot \nabla (c_1 - c_2) \notag \\
    &\sfed I_1(t) + I_2(t) \qquad \text{for all $t \in (0, T)$}.
  \end{align}
  By Lemma~\ref{lem:existence-c(t)} we have $0 \le c_1, c_2 \le \gamma$ in $\Ombar \times [0, T)$.
  Lemma~\ref{lm:trace_thm} (ii) (with $q=2$) provides us with $C_1 \gt 0$ such that 
  \begin{align} \label{eq:uniqueness:i1}
          I_1(t)
    &=    - \frac12 \int_0^t \io (n_1 - n_2)^2 \Delta c_1
          + \int_0^t \int_{\partial \Omega} (n_1 - n_2)^2 (\gamma - c_1) g \notag \\
    &\le  - \frac12 \int_0^t \io (n_1 - n_2)^2 n_1 c_1
          + \gamma\|g\|_{\leb \infty} \int_0^t \int_{\partial \Omega} (n_1 - n_2)^2 \notag \\
    &\le  \frac12 \int_0^t \io |\nabla (n_1 - n_2)|^2
          + C_1 \int_0^t \io (n_1 - n_2)^2\qquad \text{for all } t \in (0,T).
  \end{align}
  As moreover
  \begin{align*}
    0 = \Delta (c_1 - c_2) - (n_1 - n_2) c_1 - n_2 (c_1 - c_2)
    \qquad \text{in $\Omega \times (0, T)$}
  \end{align*}
  and hence 
  \begin{align*}
          \io |\nabla (c_1 - c_2)|^2
    &=    - \int_{\partial \Omega} (c_1 - c_2)^2 g
          - \io c_1 (n_1 - n_2) (c_1 - c_2)
          - \io n_2 (c_1 - c_2)^2 \\
    &\le  0
          + \gamma \io |n_1 - n_2| |c_1 - c_2|
          - \frac1{\Lambda} \io (c_1 - c_2)^2 \\
    &\le  \frac{\gamma^2 \Lambda}{4} \io (n_1 - n_2)^2\qquad \text{in } (0,T)
  \end{align*}
  by an integration by parts and Young's inequality, we may further estimate
  \begin{align} \label{eq:uniqueness:i2}
          I_2 (t) 
    &\le  \frac12 \int_0^t \io |\nabla (n_1 - n_2)|^2
          + \frac{\Lambda^2}{2} \int_0^t \io |\nabla (c_1 - c_2)|^2 \notag \\
    &\le  \frac12 \int_0^t \io |\nabla (n_1 - n_2)|^2
          + C_2 \int_0^t \io (n_1 - n_2)^2
    \qquad \text{for all } t \in (0,T),
  \end{align}
  where we have again used Young's inequality and set $C_2 \defs \frac{\gamma^2 \Lambda^3}{8}$.

  By inserting \eqref{eq:uniqueness:i1} and \eqref{eq:uniqueness:i2} into \eqref{eq:uniqueness:ddt_n1_n2},
  we see that, for all $t \in (0, T)$,
    \begin{align*}
          \io (n_1(\cdot, t) - n_2(\cdot, t))^2
      \le \io (n_{0,1} - n_{0,2})^2
          + 2(C_1 + C_2) \int_0^t \io (n_1(\cdot, t) - n_2(\cdot, t))^2,
    \end{align*}
  so that the statement follows by Grönwall's inequality for $C \defs C_1 + C_2$.
\end{proof}
By virtue of Lemmata \ref{lem:positivity} and 
\ref{lem:gronwall}, we can assure uniqueness of solutions to \eqref{our_system}. 
\begin{lemma}\label{lm:unique}
  Assume \eqref{cond:Om}, $\gamma \gt 0$, $\beta \in (0, 1)$, $C^{1 + \beta}(\partial \Omega) \ni g \gt 0$
  as well as $C^0(\Ombar) \ni n_0 \gt 0$ and let $T \in (0, \infty]$.
  Then there is at most one solution $(n, c)$ to~\eqref{our_system} which is of regularity \eqref{reg_nc_t}.
\end{lemma}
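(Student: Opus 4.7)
The plan is to reduce the statement to an application of the Grönwall-type estimate of Lemma~\ref{lem:gronwall} with $n_{0,1} = n_{0,2} = n_0$. If the hypotheses of that lemma can be verified for a pair $(n_1, c_1), (n_2, c_2)$ of solutions in the sense of~\eqref{reg_nc_t} sharing the initial datum $n_0$, then the resulting inequality $\norm[\Lom 2]{n_1(\cdot, t) - n_2(\cdot, t)} \le \ure^{Ct} \norm[\Lom 2]{n_{0,1} - n_{0,2}} = 0$ forces $n_1 \equiv n_2$ on $\Ombar \times [0, T)$. Once this is known, for each fixed $t > 0$ the functions $c_1(\cdot, t)$ and $c_2(\cdot, t)$ solve the same elliptic problem~\eqref{ceq} with coefficient $n_1(\cdot, t) = n_2(\cdot, t)$, so the uniqueness part of Lemma~\ref{lem:existence-c(t)} forces $c_1 \equiv c_2$ as well.

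To apply Lemma~\ref{lem:gronwall} I need a single constant $\Lambda > 0$ (allowed to depend on the finite subinterval under consideration) satisfying $0 \le n_1, n_2 \le \Lambda$ and $\frac{1}{\Lambda} \le n_2$ on $\Om \times (0, T')$. The upper bounds are immediate: by~\eqref{reg_nc_t}, both $n_i$ belong to $C^{1, 1/2}(\Ombar \times [0, T'])$ for every $T' < T$ and are thus bounded on that compact set, while Lemma~\ref{lem:existence-c(t)} gives $0 \le c_i \le \gamma$. The positive lower bound on $n_2$ will come from Lemma~\ref{lem:positivity}, whose hypotheses I verify as follows: since $n_0 \in C^0(\Ombar)$ is strictly positive on the compact set $\Ombar$, we have $\inf_\Om n_0 > 0$; since $\Delta c_2 = n_2 c_2$, the preceding upper bounds yield $\norm[L^\infty(\Om \times (0, T'))]{\Delta c_2} \lt \infty$; and the boundary condition $\partial_\nu n_2 = n_2 \partial_\nu c_2 = n_2 (\gamma - c_2) g$ on $\partial \Om$ coincides with the one stipulated in Lemma~\ref{lem:positivity}. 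After enlarging $\Lambda$ appropriately, all requirements of Lemma~\ref{lem:gronwall} are met.

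For $T < \infty$ the conclusion is then immediate; for $T = \infty$ one simply observes that $T' \lt \infty$ was arbitrary and passes to the limit $T' \nearrow \infty$. The proof is largely mechanical given the preparatory lemmata, so I do not expect a genuine obstacle; the only mild nuisance is bookkeeping regularity classes, namely checking that the class~\eqref{reg_nc_t} does embed into the regularity frameworks requested by Lemma~\ref{lem:positivity} and Lemma~\ref{lem:gronwall} on every finite subinterval $[0, T']$, which it does thanks to the inclusion $X \subset C^{1, 1/2}(\Ombar \times [0, T']) \cap C^{2, 1}(\Om \times (0, T'))$ together with $c_i \in C^{2, 0}(\Ombar \times (0, T))$.
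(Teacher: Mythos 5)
Your overall route --- a positive lower bound for $n_2$ via Lemma~\ref{lem:positivity}, the Grönwall estimate of Lemma~\ref{lem:gronwall} with $n_{0,1}=n_{0,2}=n_0$ to force $n_1\equiv n_2$, and then the uniqueness statement of Lemma~\ref{lem:existence-c(t)} to conclude $c_1\equiv c_2$, all on finite subintervals $[0,T')$ --- is exactly the paper's. There is, however, a gap in your verification of the hypotheses of these lemmata. The statement assumes only that $(n_i,c_i)$ has the regularity \eqref{reg_nc_t}, with no sign information whatsoever, and several inputs you take for granted need it: your assertion that ``Lemma~\ref{lem:existence-c(t)} gives $0\le c_i\le\gamma$'' is not available until you know $n_i(\cdot,t)\ge 0$ and $n_i(\cdot,t)\not\equiv 0$, since these are hypotheses of that lemma (and of the maximum-principle argument behind \eqref{c-positive}); Lemma~\ref{lem:gronwall} explicitly requires $0\le n_1$, which you never establish; and Lemma~\ref{lem:positivity} requires the \emph{strict} bounds $0<c_2<\gamma$ --- strictness at the boundary is what produces the contradiction between $\partial_\nu w\le 0$ and $n(\gamma-c)g>0$ in its proof --- whereas you only record the non-strict ones.

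The paper closes precisely this circle first: the weak maximum principle yields $n_1,n_2\ge 0$, mass conservation (Lemma~\ref{lm:mass_conservation}) together with $\io n_0>0$ yields $n_i(\cdot,t)\not\equiv 0$, and only then does Lemma~\ref{lem:existence-c(t)} give $0<c_i<\gamma$, which unlocks Lemma~\ref{lem:positivity} and subsequently Lemma~\ref{lem:gronwall}. With these preliminary steps inserted, your argument (including the reduction to arbitrary finite $T_0<T$) coincides with the paper's proof; without them, the applications of Lemmata~\ref{lem:existence-c(t)}, \ref{lem:positivity} and \ref{lem:gronwall} are not justified as written.
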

\begin{proof}
We fix $T_0\in(0,T)$. Suppose $(n_1, c_1)$ and $(n_2, c_2)$ are two such solutions.
  The (weak) maximum principle asserts $n_1, n_2 \ge 0$ while moreover by Lemma~\ref{lm:mass_conservation},
  $\io n_1(\cdot, t) = \io n_0 = \io n_2(\cdot, t)$ for all $t \in [0, T_0)$, hence $n_1 \not\equiv 0 \not\equiv n_2$.
  Then Lemma~\ref{lem:existence-c(t)} gives $0 \lt c_1, c_2 \lt \gamma$,
  so that we may apply Lemma~\ref{lem:positivity} to obtain positivity of $n_1$ and $n_2$.
  Thus, Lemma~\ref{lem:gronwall} and Lemma~\ref{lem:existence-c(t)} assert $n_1 = n_2$ and $c_1 = c_2$ in $\Ombar \times [0, T_0)$,
  respectively.
\end{proof}
We are now in the position to show existence of classical solutions to \eqref{our_system} for less regular initial data. 
\begin{lemma}\label{lem:existence_generalinitialdata}
Assume \eqref{cond:Om}, $\gamma \gt 0$ , $\beta \in (0, 1)$ and $C^{1 + \beta}(\partial \Omega) \ni g \gt 0$
and let $n_0\in C^0(\Ombar)$ be positive. 
Then there is a unique global solution $(n, c)$ of regularity \eqref{reg_nc} to~\eqref{our_system}.
\end{lemma}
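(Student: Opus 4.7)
My plan is to establish existence by approximation: regularize $n_0$ into a sequence of $C^{1+\beta}$-data to which Lemma~\ref{lem:locex_c1betadata} applies, derive time-independent a priori bounds for the resulting solutions, and pass to the limit via Arzelà--Ascoli. Uniqueness is then immediate from Lemma~\ref{lm:unique}.

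First, I would construct $n_{0,k} \in C^{1+\beta}(\Ombar)$ with $n_{0,k} \gt 0$, $n_{0,k} \to n_0$ uniformly in $\Ombar$, and each $n_{0,k}$ fulfilling the compatibility condition $\partial_\nu n_{0,k} = n_{0,k} \partial_\nu c_{0,k}$ on $\partial \Omega$, where $c_{0,k}$ denotes the solution of~\eqref{ceq} associated with $n_{0,k}$. Such a sequence can be built by mollifying $n_0$ and then modifying the mollifications within a thin boundary layer whose width shrinks with $k$ in order to enforce compatibility without spoiling the $C^0$-convergence. Since $n_0$ is strictly positive on the compact set $\Ombar$, the approximations inherit a uniform positive lower bound for $k$ large. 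For each such $k$ and each $T \gt 0$, Lemma~\ref{lem:locex_c1betadata} then produces a solution $(n_k, c_k)$ of regularity \eqref{reg_nc_t}, while Lemma~\ref{lem:boundedness-n-in-Linfty} (with $\sigma = 1$) yields an $L^\infty$-bound on $n_k$ depending only on $\|n_0\|_{\leb\infty}$, so these solutions extend globally.

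The remaining uniform bounds needed for a compactness argument come from the estimates already assembled: Lemma~\ref{lem:W2p-estimate-c} with $p \gt N$ together with Sobolev embedding gives a uniform $L^\infty$-bound on $\nabla c_k$; for each $\eps \in (0, T)$, Lemma~\ref{lem:hoelderbound:n}(ii) furnishes uniform $C^{\beta', \beta'/2}$-bounds on $n_k$ over $\Ombar \times [\eps/2, T]$, which through Lemma~\ref{lem:bounds-c(t)} and Lemma~\ref{lem:regularity:c(t)} translate to corresponding bounds for $c_k$, and Lemma~\ref{lem:n-c1beta} then upgrades the bound on $n_k$ to $C^{1+\beta'', (1+\beta'')/2}(\Ombar \times [\eps, T])$; standard interior parabolic Schauder theory finally supplies $C^{2,1}$-control on compact subsets of $\Omega \times (0, \infty)$. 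A diagonal Arzelà--Ascoli extraction produces a limit pair $(n, c)$ with the regularity \eqref{reg_nc} on $\Omega \times (0, \infty)$ and $\Ombar \times (0, \infty)$ that classically solves \eqref{our_system} there.

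The most delicate step will be to show that $n$ extends continuously to $t = 0$ with $n(\cdot, 0) = n_0$: the Hölder bounds above degenerate as $\eps \searrow 0$, and Lemma~\ref{lem:hoelderbound:n}(i) cannot be applied uniformly in $k$ because the $C^\beta$-norms of $n_{0,k}$ need not remain bounded when $n_0$ is merely continuous. To circumvent this, I would construct a common modulus of continuity at $t = 0$ depending only on the modulus of continuity of $n_0$: each $n_k$ solves a uniformly parabolic equation with uniformly bounded drift $\nabla c_k$ and uniformly bounded Robin coefficient, so a barrier argument based on the weak maximum principle, using super- and subsolutions obtained by perturbing $n_{0,k}$ by a suitable function of $t$ alone (in the spirit of Lemma~\ref{lem:positivity}), should yield $|n_k(x, t) - n_{0,k}(x)| \le \omega(t)$ with $\omega(t) \to 0$ as $t \searrow 0$, uniformly in $k$ and $x$. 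Passing to the limit transfers this uniform modulus to $n$ and establishes the required continuity at $t = 0$.
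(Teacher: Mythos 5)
Your overall skeleton (approximate $n_0$ by compatible $C^{1+\beta}$ data, invoke Lemma~\ref{lem:locex_c1betadata}, collect the uniform bounds from Lemmata~\ref{lem:boundedness-n-in-Lp}, \ref{lem:W2p-estimate-c}, \ref{lem:hoelderbound:n}(ii), \ref{lem:n-c1beta} plus interior Schauder theory, pass to the limit, and get uniqueness from Lemma~\ref{lm:unique}) is exactly the paper's, and that part is fine. The genuine gap is in the step you yourself flag as delicate: the attainment of the initial datum. Your barrier is a perturbation of $n_{0,k}$ \emph{by a function of $t$ alone}, i.e.\ of the form $n_{0,k}(x)\pm\omega(t)$. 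Plugging this into the operator produces the terms $\Delta n_{0,k}$ and $\nabla n_{0,k}\cdot\nabla c_k$, and $\Delta n_{0,k}$ is not even defined for $C^{1+\beta}$ data, let alone bounded uniformly in $k$ (mollifications of a merely continuous $n_0$ have unbounded second derivatives). The comparison in Lemma~\ref{lem:positivity} works precisely because its barrier is constant in space, so no such terms appear. Moreover, the Robin boundary inequality for a supersolution, $\partial_\nu \bar n \ge \bar n\,(\gamma-c_k)g$, is not satisfied by your candidate: the compatibility condition only gives \emph{equality} at $t=0$ with the datum $c_{0,k}$, not an inequality for $t>0$ with $c_k(\cdot,t)$, and $(\gamma-c_k)g>0$ pushes the solution up at the boundary. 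So as described, no uniform-in-$k$ modulus $\omega(t)$ comes out, and without it your diagonal limit cannot be identified at $t=0$.

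The paper closes this gap differently, with tools you already have available: positivity of the approximations (Lemma~\ref{lem:positivity}, using the uniform lower bound $n_{0,j}\ge 1/C_1$) feeds into the Gr\"onwall-type stability estimate of Lemma~\ref{lem:gronwall}, which shows that the approximating solutions form a Cauchy sequence in $C^0([0,T);\leb2)$; this identifies the full limit (not just a subsequential one) and gives $n(\cdot,0)=n_0$ in $\leb2$. Continuity of $n$ on $\Ombar\times[0,T)$, including up to $t=0$, is then obtained not by barriers but by citing DiBenedetto's continuity theory \cite[Theorem~2]{benedetto_Local}, which applies to bounded weak solutions with merely continuous initial data thanks to the uniform $L^\infty$ and $L^2$ bounds. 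A barrier argument could in principle be repaired (comparing with $k$-independent smooth functions $\phi$ with $\|\phi-n_0\|_{\leb\infty}\le\eta$ and controlled $C^2$-norm, together with an exponential-in-time factor and an auxiliary function with large inward normal derivative to absorb the Robin term), but that is a substantially different and more delicate construction than the one you sketch; as written, this step of your proof does not go through.
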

\begin{proof}
 First, fix $T \gt 0$.
 We choose a sequence $\{n_{0,j}\}_{j\inℕ}$ of functions $n_{0,j}\in C^{1+β}(\Ombar)$
 satisfying $∂_{ν}n_{0,j} = n_{0,j}(γ-c_{0,j})g$ on $\partial\Om$ (where $c_{0,j}$ solves \eqref{ceq} with $n=n_{0,j}$),
 admitting a constant $C_1>0$ such that 
 \begin{equation}\label{eqn:n0j}
  \f1{C_1} \le n_{0,j} \le C_1 \qquad\text{in } \Om \text{ for all } j\in ℕ
 \end{equation}
and fulfilling $n_{0,j}\to n_0$ in $\Lom2$. By $c_j$ and $n_j$ we denote the corresponding solutions to \eqref{ceq} and \eqref{neq} in $\Om\times(0,T)$, respectively (whose existence is given by Lemma \ref{lem:locex_c1betadata}). 
Lemma \ref{lem:boundedness-n-in-Lp} and Lemma \ref{lem:W2p-estimate-c} show that with some $C_2>0$ 
\begin{equation}\label{eqn:c2}
 \norm[\Lom{∞}]{n_j(\cdot,t)} \le C_2, \quad \norm[W^{1,∞}(\Om)]{c_j(\cdot,t)}\le C_2\qquad \text{for all } j\in ℕ \text{ and } t\in(0,T).
\end{equation}
Moreover, from \eqref{ceq} and \eqref{eqn:c2} we conclude that 
\begin{equation}\label{eqn:c2squared}
 \norm[L^{∞}(\Om\times(0,T))]{Δc_j} = \norm[L^{∞}(\Om\times(0,T))]{n_jc_j} \le C_2^2\qquad \text{for all } j\in ℕ.
\end{equation}
At the same time, \eqref{eqn:c2} also implies that with some $C_3>0$, 
\begin{equation}\label{eqn:C3}
 \norm[L^2(\Om\times(0,T))]{n_j}\le C_3\qquad \text{for all } j\in ℕ.
\end{equation}
Given any $ε>0$, from Lemma \ref{lem:hoelderbound:n}(ii) we obtain $C_4>0$ such that 
\begin{equation}\label{eqn:C4}
 \normm{C^{β',\f{β'}2}(\Ombar\times[ε,T])}{n_j}\le C_4 \qquad \text{for all } j\inℕ
\end{equation}
and Lemma \ref{lem:ex:c-u} ensures the existence of $C_5>0$ such that 
\begin{equation}\label{eqn:C5}
 \normm{C^{2+β',\f{β'}2}(\Ombar\times(2ε,T])}{c_j}\le C_5\qquad \text{for all } j\inℕ.
\end{equation}
Again for $ε>0$ and based on \eqref{eqn:c2} and \eqref{eqn:C5}, 
Lemma \ref{lem:n-c1beta} provides us with $C_6>0$ such that 
\begin{equation}\label{eqn:C6}
 \normm{C^{1+β',\f{1+β'}2}(\Ombar\times[4ε,T])}{n_j}\le C_6 \qquad \text{for all } j\inℕ.
\end{equation}
For any compact set $K\subset \Om\times(0,∞)$ we can introduce $ζ_K\in C_c^{∞}(\Om\times(0,T))$ such that $ζ_K|_K\equiv 1$ and note that $ζ_Kn$ solves 
\[
 \begin{cases}
  (ζn)_t - Δ(ζn) = f\defs -ζ∇n\cdot∇c -ζnΔc - 2∇ζ\cdot∇n +ζ_t n & \text{in } \Om \times (0, T),\\
  (ζn)(\cdot,0)=0 & \text{in } \Om,\\
  ∂_{ν}(ζn)=0 &\text{on } ∂\Om\times(0,T).
 \end{cases}
\]
 Relying on \eqref{eqn:C6} (with $ε$ so small that $\supp ζ_K\cap (\Ombar\times[0,4ε]) = \emptyset$), \eqref{eqn:C5}, \eqref{eqn:c2squared} to estimate $\normm{C^{β,\f{β}2}(\Ombar\times[0,T])}{f}$, we can use \cite[Theorem~IV.5.3]{LSU} to get $C_7>0$ satisfying 
\begin{equation}\label{eqn:C7}
 \norm[C^{2+β,1+\f{β}2}(K)]{n_j}\le \normm{C^{2+β,1+\f{β}2}(\Ombar\times[0,T])}{ζn_j}\le C_7\qquad \text{for all  } j\in ℕ.
\end{equation}
Finally, using \eqref{eqn:n0j}, \eqref{eqn:c2}, \eqref{eqn:c2squared}
and Lemma \ref{lem:gronwall} (in conjunction with Lemma~\ref{lem:positivity}), we find $C_8>0$ such that 
\begin{equation}\label{eqn:C8}
 \norm[\Lom2]{n_{j_1}(\cdot,t)-n_{j_2}(\cdot,t)}\le \ure^{C_8t} \norm[\Lom2]{n_{0,j_1}-n_{0,j_2}} \qquad \text{for all } t\in[0,T],\;\, j_1,\,j_2\inℕ.
\end{equation}
Applying compactness arguments (supplemented by \eqref{eqn:C7}, \eqref{eqn:C6}) in combination with \eqref{eqn:C8} (for $n_j$), and \eqref{eqn:C5} (for $c_j$), we see that 
\begin{equation}\label{eqn:conv}
 (n_j,c_j)\to (n,c) \qquad \text{as } j\to \infty
\end{equation}
in $\left(C^{2,1}(\Om\times(0,T))\cap C^{1,0}(\Ombar\times(0,T))\cap C^0([0,T);\Lom2) \right)\times C^{2,0}(\Ombar\times(0,T))$. 
Additionally, by virtue of \eqref{eqn:C3}, \cite[Theorem~2]{benedetto_Local} assures us that additionally $n\in C^0(\Ombar\times[0,T))$.
The convergence in \eqref{eqn:conv} guarantees that $(n,c)$ solves \eqref{our_system}.

Now letting $T \ra \infty$, thanks to Lemma~\ref{lm:unique}, we can extend $(n, c)$ to a global-in-time solution.
The same lemma also asserts uniqueness of global solutions.
\end{proof}

\subsection{Global boundedness. Proof of Theorem~\ref{th:global_ex}}\label{sec:globbd-thm-gex}
We finally show global boundedness of solutions to \eqref{our_system}, and then obtain Theorem \ref{th:global_ex}. 
\begin{lemma}\label{lm:c2_bdd}
  Assume \eqref{cond:Om}, $\gamma \gt 0$ , $\beta \in (0, 1)$, $C^{1 + \beta}(\partial \Omega) \ni g \gt 0$
  as well as $\con0 \ni n_0 \gt 0$.
  Then the solution $(n, c)$ to~\eqref{our_system} constructed in Lemma~\ref{lem:existence_generalinitialdata}
  is bounded in the sense that there exist $C \gt 0$ and $\beta' \in (0, 1)$ such that
   \begin{align*}
    \|n(\cdot,t)\|_{L^\infty(\Om)}+\|c(\cdot,t)\|_{W^{1,\infty}(\Om)}\le C \qquad \text{for all }t>0\\
 \text{and}\quad   \|n(\cdot, t)\|_{\con{\beta'}} + \|c(\cdot, t)\|_{\con{2+\beta'}} \le C
    \qquad \text{for all $t \ge 1$.}
  \end{align*}
\end{lemma}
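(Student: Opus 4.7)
The plan is to bootstrap from the time-uniform $L^\infty$-bound for $n$ furnished by Lemma~\ref{lem:boundedness-n-in-Lp}. Apply that lemma with $\sigma = 1$ to $(n, c)$ (or, equivalently, to the approximating sequence $(n_j, c_j)$ constructed in the proof of Lemma~\ref{lem:existence_generalinitialdata} and then pass to the limit via \eqref{eqn:conv}) to obtain $C_1 > 0$ with $\|n(\cdot, t)\|_{\leb\infty} \le C_1$ for all $t > 0$. Fixing any $p > N$, Lemma~\ref{lem:W2p-estimate-c} applied to the elliptic problem~\eqref{ceq} solved by $c(\cdot, t)$ provides a uniform bound on $\|c(\cdot, t)\|_{\sob 2 p}$, and the Morrey embedding $\sob 2 p \embed \sob 1 \infty$ yields $\|c(\cdot, t)\|_{\sob 1 \infty} \le C_2$ for all $t > 0$. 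These two steps already establish the first assertion of the lemma.

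For the Hölder-in-space bounds, I would exploit time-translation invariance. For each $\tau > 0$, define $\tilde n(\cdot, t) \defs n(\cdot, \tau + t)$ and $\tilde c(\cdot, t) \defs c(\cdot, \tau + t)$ on $\Om \times (0, 2)$. By \eqref{reg_nc}, $\tilde n$ is a weak solution of~\eqref{neq} on $\Om \times (0, 2)$ with $\tilde c$ in place of $c$ and meets the regularity requirements of Lemma~\ref{lem:hoelderbound:n}(ii). Since $\|\tilde n \nabla \tilde c\|_{\leb[\Om \times (0, 2)]\infty} \le C_1 C_2$, that lemma applied with $\eps = 1$, $T = 2$ and $\Lambda \defs C_1 C_2$ yields $\beta' \in (0, 1)$ and $C_3 > 0$, both independent of $\tau$, such that $\|\tilde n\|_{C^{\beta', \beta'/2}(\Ombar \times [1, 2])} \le C_3$. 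Arbitrariness of $\tau$ now gives $\|n(\cdot, t)\|_{\con{\beta'}} \le C_3$ for all $t \ge 1$. After shrinking $\beta'$ if necessary so that $\beta' \le \beta$, Lemma~\ref{lem:bounds-c(t)} applied at each $t \ge 1$ to the elliptic problem for $c(\cdot, t)$ then delivers $\|c(\cdot, t)\|_{\con{2+\beta'}} \le C_4 \gamma$, completing the proof.

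The principal technical point is ensuring the Hölder constant from Lemma~\ref{lem:hoelderbound:n}(ii) is genuinely independent of the time shift $\tau$. This is guaranteed because the hypotheses of that lemma only involve $\|n \nabla c\|_{\leb\infty}$, which has just been shown to be globally bounded in time, and because the fixed-length window $(0, 2)$ renders the resulting estimate translation-invariant. All other ingredients are routine combinations of results already established earlier in the paper.
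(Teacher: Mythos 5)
Your proposal is correct and follows essentially the same route as the paper: the uniform $L^\infty$ bound from Lemma~\ref{lem:boundedness-n-in-Lp} plus the $W^{2,p}$-estimate of Lemma~\ref{lem:W2p-estimate-c} (the paper takes $p=N+1$) give the first assertion, and the time-translation argument with Lemma~\ref{lem:hoelderbound:n}(ii) followed by Lemma~\ref{lem:bounds-c(t)} gives the Hölder bounds, just as in the paper's proof.
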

\begin{proof}
  Lemma~\ref{lem:boundedness-n-in-Linfty} states
  \begin{align*}
    \sup_{t \in (0, \infty)} \|n(\cdot, t)\|_{\leb \infty} \lt \infty,
  \end{align*}
  so that Lemma~\ref{lem:W2p-estimate-c} moreover asserts
  \begin{align*}
    \sup_{t \in (0, \infty)} \|c(\cdot, t)\|_{\sob{2}{N+1}} \lt \infty,
  \end{align*}
 which already shows the first boundedness statement.

  Let $t_0 \gt 0$. Since $n(\cdot, \cdot + t_0)$ solves \eqref{neq} with initial datum $n_0(\cdot + t_0)$
  and as $\sob{2}{N+1} \embed \con1$,
  we furthermore have
  \begin{align*}
    \|n\|_{C^{\beta_1, \frac{\beta_1}{2}}(\Ombar \times [t_0+\frac12, t_0+1])} \le C_1
  \end{align*}
  for some $C_1 \gt 0$ and $\beta_1 \in (0, 1)$ (both not depending on $t_0$) by Lemma~\ref{lem:hoelderbound:n}(ii).
  Combined with Lemma~\ref{lem:bounds-c(t)} this implies the statement.
\end{proof}

Theorem~\ref{th:global_ex} is now merely a consequence of the lemmata above:
\begin{proof}[Proof of Theorem~\ref{th:global_ex}]
Existence and uniqueness follow from Lemma~\ref{lem:existence_generalinitialdata}, 
positivity is given by Lemmata~\ref{lem:existence-c(t)} and \ref{lem:positivity}
and the boundedness statement is entailed by Lemma~\ref{lm:c2_bdd}.
\end{proof}

\section{Large time behaviour. Proof of Theorem~\ref{th:conv}}\label{sec:largetime}
Our goal, proving Theorem~\ref{th:conv}, will be a consequence of the following
\begin{lemma} \label{lem:ddt_l2_conv}
  Assume \eqref{cond:Om}.
  There is $C \gt 0$ such that
  for all $\gamma \gt 0$, $\beta \in (0, 1)$, $C^{1+\beta}(\partial \Omega) \ni g \gt 0$, $m \gt 0$
  and $\con0 \ni n_0 \gt 0$ with $\io n_0 = m$ the following holds:

  Denote the global solution of \eqref{our_system} given by Theorem~\ref{th:global_ex} by $(n, c)$
  and the solution to the stationary problem~\eqref{prob:stationary} with $\io n_\infty = m$,
  constructed in~\cite{braukhoff_lankeit}, by $(n_\infty, c_\infty)$.
  Then
  \begin{align}\label{eq:ddt_l2_conv:est_n}
        K(m, g, \gamma) \int_0^\infty \io (n - n_\infty)^2
    \le \io (n_0 - n_\infty)^2 
  \end{align}
  and
  \begin{align}\label{eq:ddt_l2_conv:est_c}
          \io |\nabla (c - c_\infty)|^2 + \frac{m}{2 \ure^\gamma} \io (c - c_\infty)^2 
    &\le  \frac{m \gamma^2 \ure^{2\gamma}}{2|\Omega| \|n_\infty\|_{\leb\infty}^2}  \io (n - n_\infty)^2
  \end{align}
  holds throughout $(0, \infty)$ with
  \begin{align*}
    K(m, g, \gamma)
    \defs \frac{\lambda_1}2
          - \gamma \left(
              C \|g\|_{\leb \infty} \max\left\{\gamma^2 \|g\|_{\leb \infty}^2, 1 \right\}
              + \frac{m \gamma \ure^{2\gamma}}{2|\Omega|}
            \right),
  \end{align*}
  where $\lambda_1$ denotes the smallest positive eigenvalue of $-\Delta$ with homogeneous Neumann boundary conditions.
\end{lemma}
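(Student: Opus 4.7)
The plan is to first establish the purely elliptic estimate~\eqref{eq:ddt_l2_conv:est_c}, then feed it into a standard energy identity for $\frac12\io(n-n_\infty)^2$, and finally integrate in time to obtain~\eqref{eq:ddt_l2_conv:est_n}.

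For~\eqref{eq:ddt_l2_conv:est_c} I observe that $c-c_\infty$ satisfies $0 = \Delta(c-c_\infty) - (nc - n_\infty c_\infty)$ in $\Omega$ with the homogeneous Robin condition $\partial_\nu(c-c_\infty) = -g(c-c_\infty)$ on $\partial\Omega$. Testing with $c-c_\infty$ the Robin condition produces a nonnegative boundary integral $\int_{\partial\Omega} g(c-c_\infty)^2 \ge 0$, which I drop; splitting
\[
  nc - n_\infty c_\infty = n_\infty(c-c_\infty) + (n-n_\infty)c
\]
then yields
\[
  \io |\nabla(c-c_\infty)|^2 + \io n_\infty (c-c_\infty)^2 \le -\io (n-n_\infty)\,c\,(c-c_\infty).
\]
At this point I would exploit the structural identity $n_\infty = A \ure^{c_\infty}$ from~\cite{braukhoff_lankeit} (which follows from the first steady-state equation together with the compatible boundary condition $\partial_\nu n_\infty = n_\infty \partial_\nu c_\infty$), providing the pointwise lower bound $n_\infty \ge \|n_\infty\|_{\leb\infty} \ure^{-\gamma}$. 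Combining this with $|c| \le \gamma$ and Young's inequality with a suitably tuned weight separates the $(c-c_\infty)^2$ and $(n-n_\infty)^2$ contributions and gives~\eqref{eq:ddt_l2_conv:est_c}.

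Turning to~\eqref{eq:ddt_l2_conv:est_n}, by Lemma~\ref{lm:c2_bdd} the pair $(n,c)$ is smooth enough on $\Ombar \times (0,\infty)$ to justify differentiating $\frac12\io(n-n_\infty)^2$ and integrating by parts classically (or, equivalently, to upgrade Lemma~\ref{lm:weak_testing} to a pointwise-in-time identity). The decisive structural observation is that all boundary contributions cancel: since $\partial_\nu n = n\partial_\nu c$ and $\partial_\nu n_\infty = n_\infty \partial_\nu c_\infty$, one has $\partial_\nu(n-n_\infty) = (n\nabla c - n_\infty \nabla c_\infty)\cdot\nu$, so that
\[
  \ddt \frac12 \io (n-n_\infty)^2 + \io |\nabla(n-n_\infty)|^2 = \io \nabla(n-n_\infty)\cdot(n\nabla c - n_\infty\nabla c_\infty).
\]
I now decompose $n\nabla c - n_\infty\nabla c_\infty = n_\infty\nabla(c-c_\infty) + (n-n_\infty)\nabla c$. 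The first summand is handled by Cauchy--Schwarz and Young's inequality with weight $\|n_\infty\|_{\leb\infty}$; the crucial cancellation is that the resulting $\|n_\infty\|_{\leb\infty}^2 \io |\nabla(c-c_\infty)|^2$ is compensated exactly by the $\|n_\infty\|_{\leb\infty}^{-2}$ on the right of~\eqref{eq:ddt_l2_conv:est_c}, leaving the clean contribution $\frac{m\gamma^2\ure^{2\gamma}}{4|\Omega|}\io(n-n_\infty)^2$ and absorbing $\frac12\io|\nabla(n-n_\infty)|^2$ into the diffusion. The second summand I rewrite as $\frac12\io\nabla(n-n_\infty)^2\cdot\nabla c$ and integrate by parts once more: the interior contribution $-\frac12\io(n-n_\infty)^2\Delta c = -\frac12\io(n-n_\infty)^2 nc \le 0$ is discarded, and only the Robin boundary term $\frac12\int_{\partial\Omega}(n-n_\infty)^2(\gamma-c)g \le \frac{\gamma\|g\|_{\leb\infty}}{2}\int_{\partial\Omega}(n-n_\infty)^2$ survives.

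The boundary integral in the previous line is the main obstacle and is precisely where the trace--interpolation inequality of Appendix~\ref{sec:interpolation} enters (just as in Lemmata~\ref{lem:boundedness-n-in-Lp} and~\ref{lem:gronwall}): applied to $\varphi = n-n_\infty$ with a small parameter $\eps$, it gives $\int_{\partial\Omega}\varphi^2 \le \eps\io|\nabla\varphi|^2 + C(\eps)(\io|\varphi|)^2$ with $C(\eps) \sim \eps^{-\mu}$ for some $\mu > 0$. Mass conservation (Lemma~\ref{lm:mass_conservation}) yields $\io(n-n_\infty)=0$, while Cauchy--Schwarz bounds $(\io|n-n_\infty|)^2 \le |\Omega|\io(n-n_\infty)^2$; choosing $\eps \sim (\gamma\|g\|_{\leb\infty})^{-1}$ absorbs the gradient part into the leftover $\frac12\io|\nabla(n-n_\infty)|^2$, and tracking how $C(\eps)$ scales in $\gamma\|g\|_{\leb\infty}$ produces precisely the factor $\gamma C\|g\|_{\leb\infty}\max\{\gamma^2\|g\|_{\leb\infty}^2,1\}$ appearing in $K(m,g,\gamma)$. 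A final application of Poincar\'e's inequality $\lambda_1\io(n-n_\infty)^2 \le \io|\nabla(n-n_\infty)|^2$ (again using the vanishing mean of $n-n_\infty$) converts what remains of the dissipation into $\frac{\lambda_1}{2}\io(n-n_\infty)^2$, and collecting all contributions yields
\[
  \ddt \io(n-n_\infty)^2 + K(m,g,\gamma)\io(n-n_\infty)^2 \le 0,
\]
which upon integration over $(0,\infty)$ gives~\eqref{eq:ddt_l2_conv:est_n}. The main technical difficulty, beyond carefully bookkeeping the constants so that the $\|n_\infty\|_{\leb\infty}$ powers cancel in exactly the right way, will be to choose the trace--interpolation parameter $\eps$ so that sufficiently much of the dissipation remains for the Poincar\'e step.
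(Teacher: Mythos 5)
Your strategy is essentially the paper's: the same splitting $nc-n_\infty c_\infty=(n-n_\infty)c+n_\infty(c-c_\infty)$ and Robin sign for the elliptic estimate, the structural identity $n_\infty=\alpha_\infty \ure^{c_\infty}$ from \cite{braukhoff_lankeit} (note that for the right-hand constant in \eqref{eq:ddt_l2_conv:est_c} you also need the companion bound $\alpha_\infty\le m/|\Omega|$, i.e.\ $\|n_\infty\|_{\leb\infty}\le \frac{m}{|\Omega|}\ure^{\gamma}$, which follows from the same identity), the energy identity for $n-n_\infty$ with the decomposition $(n-n_\infty)\nabla c+n_\infty\nabla(c-c_\infty)$ and vanishing boundary terms, the sign of $-\io (n-n_\infty)^2 nc$, the trace interpolation for the Robin boundary integral, and Poincar\'e's inequality with $\io(n-n_\infty)=0$; working with the pointwise-in-time derivative for $t>0$ instead of the integrated identity of Lemma~\ref{lm:weak_testing} is an acceptable variant given the regularity for positive times.

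However, the constant bookkeeping as you describe it does not reproduce the specific $K(m,g,\gamma)$ asserted, and the lemma's content is exactly this quantitative form. First, you invoke the trace inequality in its $q=1$ form $\int_{\partial\Omega}\varphi^2\le \eps\io|\nabla\varphi|^2+C(\eps)\left(\io|\varphi|\right)^2$; by \eqref{eq:trace_thm:cond_lambda} this forces $\lambda<\frac{1}{N+2}$, hence $C(\eps)\sim\eps^{-\mu}$ with $\mu=\frac{1-\lambda}{\lambda}>N+1$, and with $\eps\sim(\gamma\|g\|_{\leb\infty})^{-1}$ the boundary term contributes a factor of order $\gamma\|g\|_{\leb\infty}\max\{(\gamma\|g\|_{\leb\infty})^{\mu},1\}$, strictly worse than the stated $\gamma\|g\|_{\leb\infty}\max\{\gamma^2\|g\|_{\leb\infty}^2,1\}$ once $\gamma\|g\|_{\leb\infty}>1$. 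The paper applies Lemma~\ref{lm:trace_thm}(ii) with $q=2$ (admissible $\lambda=\frac13<\frac12$); since you pass to $\io(n-n_\infty)^2$ by Cauchy--Schwarz anyway, this costs nothing, and after squaring with parameter $\sim\min\{1,(\gamma\|g\|_{\leb\infty})^{-1/2}\}$ it yields exactly the exponent $2$. (Mass conservation is not needed here; it enters only in the Poincar\'e step.) Second, your dissipation budget is overspent: you give $\frac12\io|\nabla(n-n_\infty)|^2$ to the $n_\infty\nabla(c-c_\infty)$ term and then absorb the boundary term into ``the leftover $\frac12$'', so at most $\frac14\io|\nabla(n-n_\infty)|^2$ survives for Poincar\'e, producing $\frac{\lambda_1}{4}$ rather than the $\frac{\lambda_1}{2}$ in $K$; the paper spends $\frac14+\frac14$ and keeps $\frac12$. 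Both slips are repaired by retuning Young's inequality and the trace parameter, and they would not affect the qualitative application in Theorem~\ref{th:conv}, but as written your argument proves \eqref{eq:ddt_l2_conv:est_n} only with a smaller $K$ than the one claimed in Lemma~\ref{lem:ddt_l2_conv}.
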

\begin{proof}
  Any such solution $(n, c)$ fulfills
  \begin{align*}
      0 = \Delta (c - c_\infty) - (n - n_\infty) c - n_\infty (c - c_\infty) \qquad \text{in $\Omega \times (0, \infty)$},
  \end{align*}
  hence
  \begin{align}\label{eq:ddt_l2_conv:nabla_c_2}
          \io |\nabla (c - c_\infty)|^2
    &=    - \int_{\partial \Omega} (c - c_\infty)^2 g
          - \io c (n - n_\infty) (c - c_\infty)
          - \io n_\infty (c - c_\infty)^2 \notag \\
    &\le  - \frac12 \io n_\infty (c - c_\infty)^2 
          + \frac{\gamma^2}{2} \io \frac{(n - n_\infty)^2}{n_\infty}\qquad\text{in } (0,∞)
  \end{align}
  by Young's inequality and as $c \le \gamma$ in $\Omega \times (0, \infty)$.

  By \cite[Theorem~1.1 and Lemma~4.1]{braukhoff_lankeit} there is $\alpha_\infty \gt 0$
  such that $n_\infty = \alpha_\infty \ure^{c_\infty} \ge \alpha_\infty$.
  Since $\io n_\infty = m$ and $1 \le \ure^{c_\infty} \le \ure^\gamma$,
  we can estimate $\frac{m}{\ure^\gamma |\Omega|} \le \alpha_\infty \le \frac{m}{|\Omega|}$
  and hence $\|n_\infty\|_{\leb \infty}^2 \le \alpha_\infty \frac{m}{|\Omega|} \ure^{2\gamma}$.
  Thus, \eqref{eq:ddt_l2_conv:nabla_c_2} already implies~\eqref{eq:ddt_l2_conv:est_c}.

  Moreover, by Lemma~\ref{lm:weak_testing}
  (applied to $n - n_\infty$ with $f \defs -n_j \nabla c - n_\infty \nabla (c - c_\infty)$, $\psi \equiv 0$ and $p \defs 2$),
  \begin{align}\label{eq:ddt_l2_conv:nabla_n_2}
    &\pe  \frac12 \io (n(\cdot, t) - n_\infty)^2 - \frac12 \io (n_0 - n_\infty)^2 + \io |\nabla (n - n_\infty)|^2 \notag \\
    &=    \int_0^t \io [(n - n_\infty) \nabla c
          + n_\infty \nabla (c - c_\infty)] \cdot \nabla (n - n_\infty) \notag \\
    &=    \frac12 \int_0^t \io \nabla (n - n_\infty)^2 \cdot \nabla c
          + \int_0^t \io n_\infty \nabla (n - n_\infty) \cdot \nabla (c - c_\infty) \notag\\
    &\sfed I_1(t) + I_2(t) \qquad\text{for } t \in (0,∞).
  \end{align}
  By Lemma~\ref{lm:trace_thm}(ii)
  (with $q \defs 2$, $\lambda \defs \frac13 \lt \frac12$ and $\eps \defs \frac{1}{4\gamma \|g\|_{\leb \infty}}$),
  we have 
  \begin{align}\label{eq:ddt_l2_conv:i1}
          I_1(t)
    &=    - \frac12 \int_0^t \io (n - n_\infty)^2 \Delta c
          + \int_0^t \int_{\partial \Omega} (n - n_\infty)^2 (\gamma - c) g \notag \\
    &\le  - \frac12 \int_0^t \io (n - n_\infty)^2 nc
          + \gamma \|g\|_{\leb \infty} \int_0^t \int_{\partial \Omega} (n - n_\infty)^2 \notag \\
    &\le  \frac14 \int_0^t \io |\nabla (n - n_\infty)|^2
          + C \gamma \|g\|_{\leb \infty} \left(1 + \gamma^2 \|g\|_{\leb \infty}^2 \right) \int_0^t \io (n - n_\infty)^2
          \qquad \text{for } t \in (0,∞) 
  \end{align}
  for some $C \gt 0$ only depending on $\Omega$.
  Furthermore,
  \begin{align}\label{eq:ddt_l2_conv:i2}
          I_2(t)
    &\le  \frac14 \int_0^t \io |\nabla (n - n_\infty)|^2
          + \|n_\infty\|_{\leb \infty}^2 \int_0^t \io |\nabla (c - c_\infty)|^2 \notag \\
    &\le  \frac14 \int_0^t \io |\nabla (n - n_\infty)|^2
          + \frac{m \gamma^2 \ure^{2\gamma}}{2|\Omega|} \int_0^t \io (n - n_\infty)^2
          \qquad \text{for } t \in (0,∞) 
  \end{align}
  by \eqref{eq:ddt_l2_conv:est_c}.
  Thus, \eqref{eq:ddt_l2_conv:nabla_n_2} combined with~\eqref{eq:ddt_l2_conv:i1} and~\eqref{eq:ddt_l2_conv:i2}
  asserts that
  \begin{align*}
        \frac12 \io (n(t) - n_\infty)^2
    \le \frac12 \io (n_0 - n_\infty)^2
        - \frac12 \int_0^t \io \nabla (n - n_\infty)^2
        + \left( \frac{\lambda_1}{2} - K(m, g, \gamma) \right) \int_0^t \io (n - n_\infty)^2
  \end{align*}
  holds for all $t \in (0, \infty)$.
  Since $\io \varphi^2 \le \frac1{\lambda_1} \io |\nabla \varphi|^2$ for all $\varphi \in \sob12$ with $\io \varphi = 0$ by Poincar\'e's inequality,
  this implies~\eqref{eq:ddt_l2_conv:est_n} due to the fact that $\io (n - n_\infty) = m - m = 0$.
\end{proof}

Thanks to the energy estimate obtained in Lemma~\ref{lem:ddt_l2_conv} and Hölder bounds prepared earlier,  
we can achieve convergence of solutions of \eqref{our_system}. 
\begin{lemma}\label{lem:conv}
  Assume \eqref{cond:Om}. 
  Let $C$ and $K$ be as in Lemma~\ref{lem:ddt_l2_conv}.
  If $\gamma \gt 0$, $\beta \in (0, 1)$, $C^{1+\beta}(\partial\Om)\ni g \gt 0$, $m \gt 0$ and $\con0 \ni n_0 \gt 0$
  are such that $\io n_0 = m$ and $K(m, g, \gamma) \gt 0$,
  then
  \begin{align*}
    (n(\cdot, t), c(\cdot, t)) \ra (n_\infty, c_\infty) \qquad \text{in $\con0 \times \con2$ as $t \ra \infty$},
  \end{align*}
  where $(n, c)$ and $(n_\infty, c_\infty)$
  again denote the solutions to \eqref{our_system} and \eqref{prob:stationary} with $\io n_\infty = m$, respectively.
\end{lemma}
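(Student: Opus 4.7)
The plan is to use the integrability estimate \eqref{eq:ddt_l2_conv:est_n} to obtain $\Lom 2$-convergence of $n(\cdot,t)$ to $n_\infty$, then upgrade this to $\con 0$-convergence via the spatial Hölder bounds of Lemma~\ref{lm:c2_bdd}, and finally transfer the convergence to $c$ through the pointwise-in-time estimate \eqref{eq:ddt_l2_conv:est_c}.

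First, since $K(m,g,\gamma) \gt 0$, Lemma~\ref{lem:ddt_l2_conv} provides $\int_0^\infty \io (n - n_\infty)^2 \lt \infty$. Setting $f(t) \defs \io (n(\cdot,t) - n_\infty)^2$, I would show that $f$ is uniformly continuous on $[1,\infty)$: the proof of Lemma~\ref{lm:c2_bdd} (via Lemma~\ref{lem:hoelderbound:n}(ii) applied to shifted solutions) yields a bound $\|n\|_{C^{\beta_1,\beta_1/2}(\Ombar \times [t_0+\frac12,\,t_0+1])} \le C$ uniform in $t_0 \ge \tfrac12$, which together with the $L^\infty$-bound on $n$ and $n_\infty$ implies Hölder continuity of $f$ on $[1,\infty)$. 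A nonnegative, uniformly continuous, integrable function on $[1,\infty)$ must vanish at infinity, so $\|n(\cdot,t) - n_\infty\|_{\Lom 2} \to 0$ as $t \to \infty$.

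Next, Lemma~\ref{lm:c2_bdd} asserts that $\{n(\cdot,t)\}_{t \ge 1}$ is bounded in $\con{\beta'}$, hence precompact in $\con 0$ by the Arzelà--Ascoli theorem. Any sequence $t_k \to \infty$ admits a subsequence along which $n(\cdot,t_{k_j}) \to \tilde n$ in $\con 0$; the $\Lom 2$-convergence from the previous step forces $\tilde n = n_\infty$, so by uniqueness of the subsequential limit the whole family converges, and $n(\cdot,t) \to n_\infty$ in $\con 0$.

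Finally, \eqref{eq:ddt_l2_conv:est_c} transfers the convergence to $c$: its right-hand side is $\io(n-n_\infty)^2$ multiplied by a constant depending only on $m, \gamma, \Omega$, so at each time $t$,
\[
  \io |\nabla(c - c_\infty)|^2 + \io (c - c_\infty)^2 \le C \io (n - n_\infty)^2 \ra 0,
\]
yielding $c(\cdot,t) \to c_\infty$ in $\sob 12$ and in particular in $\Lom 2$. Combined with the precompactness of $\{c(\cdot,t)\}_{t\ge 1}$ in $\con 2$ (which follows from the uniform $\con{2+\beta'}$-bound of Lemma~\ref{lm:c2_bdd} via Arzelà--Ascoli), the same subsequence argument gives $c(\cdot,t) \to c_\infty$ in $\con 2$.

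The main subtlety is the first step: integrability alone does not force pointwise vanishing at infinity; some uniform regularity in time is needed to rule out thin spikes, which is precisely what the uniform time-Hölder control from Lemma~\ref{lm:c2_bdd} supplies. Everything else is a standard compactness-plus-uniqueness-of-limits argument.
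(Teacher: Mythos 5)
Your proposal is correct and follows essentially the same route as the paper: integrability of $\io(n-n_\infty)^2$ from \eqref{eq:ddt_l2_conv:est_n} plus uniform (time-Hölder) continuity supplied by the bounds behind Lemma~\ref{lm:c2_bdd} gives $\Lom2$-convergence of $n$, \eqref{eq:ddt_l2_conv:est_c} transfers this to $\sob12$-convergence of $c$, and the uniform $\con{\beta'}$- and $\con{2+\beta'}$-bounds together with compactness (Arzelà--Ascoli, i.e.\ the compact embeddings into $\con0$ and $\con2$) upgrade both to the asserted uniform convergence. Your spelled-out Barbalat-type and subsequence arguments are exactly what the paper's terser proof relies on.
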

\begin{proof}
 Since Lemma~\ref{lm:c2_bdd} asserts uniform continuity of $(1, \infty) \ni t \mapsto \io (n(\cdot, t) - n_\infty)^2$,
  we infer from \eqref{eq:ddt_l2_conv:est_n} and the assumption that $K > 0$, that 
  \begin{align*}
    n(\cdot, t) &\ra n_\infty\hp{c} \qquad \text{in $\leb2$ as $t \ra \infty$}.
  \intertext{Together with \eqref{eq:ddt_l2_conv:est_c}, this in turn implies}
    c(\cdot, t) &\ra c_\infty\hp{n} \qquad \text{in $\sob12$ as $t \ra \infty$}.
  \end{align*}
  The statement follows from Lemma~\ref{lm:c2_bdd} in conjunction with 
  the compact embeddings $\con{\beta'} \embed \con0$ and  $\con{2+\beta'} \embed \con2$ for $\beta' \in (0, 1)$.
\end{proof}

\begin{proof}[Proof of Theorem~\ref{th:conv}]
  Let $C$ and $K$ be as in Lemma~\ref{lem:ddt_l2_conv}. 
  Given $m, C_g \gt 0$ we can find $\gamma_0 \gt 0$
  and given $\gamma_0 \gt 0$ we can find $m, C_g \gt 0$
  such that
  if $m, g, \gamma$ comply with \eqref{eq:conv:cond_n0} and \eqref{eq:conv:cond_g_gamma},
  then $K(m, g, \gamma) > 0$. 
  Uniform convergence of $n$, $c$ and $\nabla c$ is then a consequence of Lemma~\ref{lem:conv}.
\end{proof}

\appendix
\section{An interpolation inequality}\label{sec:interpolation}
Owing to the nonhomogenous boundary conditions in~\eqref{ceq},
in the proofs above (namely in the proofs of Lemmata~\ref{lem:boundedness-n-in-Lp}, \ref{lem:gronwall} and \ref{lem:ddt_l2_conv})
we needed to handle certain boundary integrals.
The following interpolation lemma allows us to estimate such terms
and may turn out to be useful for the analysis of other systems as well,
including but not limited to those with nonhomogenous boundary conditions.
In fact, similar arguments have been employed for a chemotaxis system with homogeneous boundary conditions
in the proof of \cite[Proposition~3.2]{IshidaEtAlBoundednessQuasilinearKeller2014}.

\begin{lemma} \label{lm:trace_thm}
  Suppose \eqref{cond:Om}, $q \in (0, 2]$ and
  \begin{align} \label{eq:trace_thm:cond_lambda}
    \lambda \in \left(0, \frac{q}{2N + 2q - Nq} \right).
  \end{align}

  \begin{enumerate}
    \item[(i)] 
      There exists $C \gt 0$ such that
      \begin{align*}
            \|\varphi\|_{\leb[\partial \Omega]2}
        \le C \|\nabla \varphi\|_{\leb 2}^{1-\lambda} \|\varphi\|_{\leb q}^\lambda + C \|\varphi\|_{\leb q}
        \qquad \text{for all $\varphi \in \sob12$}.
      \end{align*}
     
    \item[(ii)] 
      We may find $C' \gt 0$ such that for any $\eps \gt 0$
      \begin{align*}
            \|\varphi\|_{\leb[\partial \Omega]2}
        \le \eps \|\nabla \varphi\|_{\leb 2} + C' \min\{1, \eps\}^{-\frac{1-\lambda}{\lambda}} \|\varphi\|_{\leb q}
        \qquad \text{for all $\varphi \in \sob12$}.
      \end{align*}
  \end{enumerate}
  Here and below, we denote $\left( \io \varphi^r \right)^\frac1r$ by $\|\varphi\|_{\leb r}$ for all $r \gt 0$.
\end{lemma}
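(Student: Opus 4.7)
The plan is to reduce both (i) and (ii) to a single critical Gagliardo--Nirenberg-type estimate on the boundary, derive the subcritical range by an elementary case analysis, and obtain (ii) from (i) via Young's inequality. For the critical exponent $\lambda_0 \defs q/(2N+2q-Nq)$ in (i), I would start from the standard boundary identity: choosing $\vec{F} \in C^1(\Ombar;\R^N)$ with $\vec{F}\cdot\nu\ge c_0>0$ on $\partial\Omega$ (which exists since $\partial\Omega$ is smooth), the divergence theorem applied to $\varphi^2 \vec{F}$ yields $\|\varphi\|_{L^2(\partial\Omega)}^2\le C\|\varphi\|_{L^2(\Omega)}^2+C\|\varphi\|_{L^2(\Omega)}\|\nabla\varphi\|_{L^2(\Omega)}$, hence
\[
\|\varphi\|_{L^2(\partial\Omega)}\le C\|\varphi\|_{L^2(\Omega)}+C\|\varphi\|_{L^2(\Omega)}^{1/2}\|\nabla\varphi\|_{L^2(\Omega)}^{1/2}.
\]
Invoking the interior Gagliardo--Nirenberg estimate $\|\varphi\|_{L^2(\Omega)}\le C\|\nabla\varphi\|_{L^2}^{a}\|\varphi\|_{L^q}^{1-a}+C\|\varphi\|_{L^q}$ with $a$ fixed by scaling, then substituting and distributing, the right-hand side becomes a finite sum of monomials of form $\|\nabla\varphi\|_{L^2}^{1-\mu}\|\varphi\|_{L^q}^\mu$ (for various $\mu\ge\lambda_0$) plus a pure $\|\varphi\|_{L^q}$ term; a direct dimension count $(N-1)/2=(1-\lambda_0)(N/2-1)+\lambda_0 N/q$ confirms that $\lambda_0$ is the smallest $\mu$ produced.

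To bring the critical version down to any $\lambda\in(0,\lambda_0)$ as in the hypothesis, I would use the elementary observation that, for $0<\lambda<\mu\le 1$ and $A,B\ge 0$,
\[
A^{1-\mu}B^\mu\le A^{1-\lambda}B^\lambda+B,
\]
proved by distinguishing cases: if $A\le B$ then $A^{1-\mu}B^\mu\le B^{1-\mu}B^\mu=B$, while if $A>B$ then $A^{1-\mu}B^\mu=A^{1-\lambda}B^\lambda(A/B)^{\lambda-\mu}\le A^{1-\lambda}B^\lambda$ since $\lambda-\mu<0$. Applying this with $A \defs \|\nabla\varphi\|_{L^2}$, $B \defs \|\varphi\|_{L^q}$ and $\mu$ ranging over each exponent that appeared in the previous step collapses the entire sum to the desired $C\|\nabla\varphi\|_{L^2}^{1-\lambda}\|\varphi\|_{L^q}^\lambda+C\|\varphi\|_{L^q}$, establishing (i).

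Part (ii) then follows from (i) by Young's inequality with exponents $\tfrac{1}{1-\lambda}$ and $\tfrac{1}{\lambda}$ applied to $\|\nabla\varphi\|_{L^2}^{1-\lambda}\|\varphi\|_{L^q}^\lambda$, after the rescaling $\|\nabla\varphi\|_{L^2}\mapsto\alpha\|\nabla\varphi\|_{L^2}$ with $\alpha$ chosen so that the coefficient of $\|\nabla\varphi\|_{L^2}$ on the right-hand side equals $\eps$; this produces $\eps\|\nabla\varphi\|_{L^2}+C\eps^{-(1-\lambda)/\lambda}\|\varphi\|_{L^q}$, and the extra $\|\varphi\|_{L^q}$ term from (i) (which carries no power of $\eps$) is absorbed into the same expression, yielding the claimed $\min\{1,\eps\}^{-(1-\lambda)/\lambda}$ dependence. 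The main technical point is the bookkeeping in the first paragraph---ensuring that every intermediate monomial carries an exponent of $\|\varphi\|_{L^q}$ at least $\lambda_0$, so that the subcritical trick applies uniformly across all admissible $(N,q)$; a minor side issue is that the quasi-norm regime $q\in(0,1)$ strictly requires a generalized Gagliardo--Nirenberg (for instance via Brezis--Mironescu), but this range is not actually invoked in the main body of the paper.
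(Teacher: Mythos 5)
Your argument is correct, and for part (i) it takes a genuinely different route from the paper. The paper obtains the boundary estimate from the interpolation-space embedding $[W^{1,2}(\Omega),L^2(\Omega)]_\theta=W^{1-\theta,2}(\Omega)\hookrightarrow L^2(\partial\Omega)$ of Lions--Magenes, which forces $\theta<\tfrac12$ and hence the strict inequality $\lambda<\lambda_0:=\tfrac{q}{2N+2q-Nq}$, and then interpolates $\|\varphi\|_{L^2}$ between $W^{1,2}$ and $L^q$ via the generalized Gagliardo--Nirenberg lemma of Li--Lankeit (valid also for $q<1$). You instead start from the elementary multiplicative trace inequality coming from the divergence theorem, $\|\varphi\|_{L^2(\partial\Omega)}\le C\|\varphi\|_{L^2}+C\|\varphi\|_{L^2}^{1/2}\|\nabla\varphi\|_{L^2}^{1/2}$, insert the same Gagliardo--Nirenberg bound, and lower exponents with $A^{1-\mu}B^{\mu}\le A^{1-\lambda}B^{\lambda}+B$ for $0<\lambda<\mu\le1$. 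Your bookkeeping does close: the monomials produced carry $\|\varphi\|_{L^q}$-exponents $\mu\in\{\lambda_0,\tfrac12,2\lambda_0,1\}$, each at least $\lambda_0$ because $\lambda_0\le\tfrac12$ and $2\lambda_0\le1$ for $q\in(0,2]$, so the exponent-lowering step applies uniformly. What your route buys is that it is more elementary (no fractional trace/interpolation theory) and it even yields the endpoint $\lambda=\lambda_0$, which the paper's embedding argument cannot reach since the trace embedding fails at $\theta=\tfrac12$; this does not contradict the paper's optimality remark, which concerns the linear trace embedding $W^{s,2}(\Omega)\to L^2(\partial\Omega)$ rather than the multiplicative inequality. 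Two small repairs: for $q\in(0,1)$ the quasi-norm Gagliardo--Nirenberg inequality is genuinely needed to prove the lemma as stated (the paper only applies it with $q\in\{1,2\}$, but the statement covers $q\in(0,1)$), and the right reference is precisely the Li--Lankeit lemma the paper cites rather than Brezis--Mironescu, whose setting is $p\ge1$. Part (ii) coincides with the paper's proof: Young's inequality with exponents $\tfrac1{1-\lambda}$ and $\tfrac1\lambda$, with the additive $\|\varphi\|_{L^q}$ term absorbed using $\min\{1,\eps\}^{-(1-\lambda)/\lambda}\ge1$.
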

\begin{proof}
  Ad (i):
    By \eqref{eq:trace_thm:cond_lambda} we have
    \begin{align*}
      \theta \defs \frac12 - \left( \frac{q}{2N + 2q - Nq} - \lambda \right) \cdot \frac{2N + 2q - Nq}{2q} \in \left(0, \frac12\right),
    \end{align*}
    so that (I.9.1) and Theorem~I.9.4 in \cite{LionsMagenesNonhomogeneousBoundaryValue1972} yield
    \begin{align*}
        \left[\sob12, \leb2\right]_\theta
      = \sob{1-\theta}{2}
      \embed \leb[\partial \Omega]2.
    \end{align*}
    That is, by Proposition~I.2.3 in \cite{LionsMagenesNonhomogeneousBoundaryValue1972} we may find $c_1 \gt 0$ with
    \begin{align}\label{embeddingineq}
          \|\varphi\|_{\leb[\partial \Omega]2}
      \le c_1 \|\varphi\|_{\sob12}^{1-\theta} \|\varphi\|_{\leb 2}^\theta
      \qquad \text{for all $\varphi \in \sob12$}.
    \end{align}
    Since $q \in (0, 2]$,
    Lemma~2.2 and Lemma~2.3 in \cite{LiLankeitBoundednessChemotaxisHaptotaxis2016} allow us to further estimate
    \begin{align}\label{gnineq1}
          \|\varphi\|_{\leb2}
      \le c_2 \|\varphi\|_{\sob 12}^{1-a} \|\varphi\|_{\leb q}^a
      \qquad \text{for all $\varphi \in \sob12$}
    \end{align}
    for some $c_2 \gt 0$, where
    \begin{align*}
        a \defs 1 - \frac{\frac1q - \frac12}{\frac1q + \frac1N - \frac12}
      = \frac{\frac1N}{\frac1q + \frac1N - \frac12} 
      = \frac{2q}{2N + 2q - Nq}
      \in (0, 1).
    \end{align*}

    By combining the estimates \eqref{embeddingineq} and \eqref{gnineq1}, we get
    \begin{align*}
          \|\varphi\|_{\leb[\partial \Omega]2}
      \le c_1 c_2 \|\varphi\|_{\sob 12}^{1 - \theta a} \|\varphi\|_{\leb q}^{\theta a}
      \qquad \text{for all $\varphi \in \sob12$},
    \end{align*}
    where we again make use of \cite[Lemma~2.2]{LiLankeitBoundednessChemotaxisHaptotaxis2016} to obtain
    \begin{align*}
          \|\varphi\|_{\leb[\partial \Omega]2}
      \le c_3 \|\nabla \varphi\|_{\leb 2}^{1 - \theta a} \|\varphi\|_{\leb q}^{\theta a} + c_3 \|\varphi\|_{\leb q}
      \qquad \text{for all $\varphi \in \sob12$}
    \end{align*}
    for some $c_3 \gt 0$.

    Finally, the statement follows upon noting that
    \begin{align*}
          \theta a
      &=  \left[ \frac12 - \left( \frac{q}{2N + 2q - Nq} - \lambda \right) \cdot \frac{2N + 2q - Nq}{2q} \right]
          \cdot \frac{2q}{2N + 2q - Nq} \\
      &=  \frac{q}{2N + 2q - Nq} - \left( \frac{q}{2N + 2q - Nq} - \lambda \right)
      =   \lambda.
    \end{align*}

  Ad (ii):
    Young's inequality (with exponents $\frac{1}{1-\lambda}, \frac1\lambda$) asserts that
    \begin{align*}
          a b
      =   ((1-\lambda) \tilde \eps)^{1-\lambda} a \cdot \left((1-\lambda) \tilde \eps \right)^{-(1-\lambda)} b
      \le \tilde \eps a^\frac{1}{1-\lambda} + \lambda \left((1-\lambda) \tilde \eps \right)^{-\frac{1-\lambda}{\lambda}} b^\frac1\lambda
    \end{align*}
    holds for all $a, b \ge 0$ and all $\tilde \eps \gt 0$.

    Thus, using part~(i) and 
    choosing $a \defs \|\nabla \varphi\|_{\leb 2}, b \defs \|\varphi\|_{\leb q}$, $\tilde \eps \defs \frac{\eps}{C}$
    and $C' \gt 0$ large enough
    proves part (ii) and hence the lemma.
\end{proof}

\begin{remark}
  For the case $q=2$, which we have used in the critical Lemma~\ref{lem:ddt_l2_conv},
  the condition \eqref{eq:trace_thm:cond_lambda} is optimal, see \cite[Theorem~9.51]{LionsMagenesNonhomogeneousBoundaryValue1972}.
\end{remark}

\section*{Acknowledgements}
The first author is partially supported by the German Academic Scholarship Foundation
and by the Deutsche Forschungsgemeinschaft within the project \emph{Emergence of structures and advantages in
cross-diffusion systems}, project number 411007140.

{\footnotesize 
\def\cprime{$'$}

}
\end{document}